%% removed the parts used in fibracoesxxx.tex
 %%removed the parts already in cotangente3.tex
\documentclass[10pt,a4paper]{amsart}
\usepackage{fourier} 
\usepackage{ulem}

\usepackage{amsthm}
\usepackage{mathrsfs}
\usepackage{float}
\usepackage{amsmath}
\usepackage[all]{xy}\usepackage{xypic}
\usepackage{braket}
\usepackage{color}
\usepackage[colorlinks = true,citecolor=blue]{hyperref}
\hypersetup{
  pdftitle   = {},
  pdfauthor  = {},
  pdfcreator = {\LaTeX\ with package \flqq hyperref\frqq}
}

\DeclareMathAlphabet{\mathpzc}{OT1}{pzc}{m}{it}
\usepackage{tikz-cd}
\usepackage{tkz-graph}
\usetikzlibrary{shapes.geometric,arrows, shapes}

\newtheorem{theorem}{Theorem}[section]
\newtheorem*{theorem*}{Theorem}
\newtheorem{theorem-non}{Theorem}
\newtheorem{proposition}[theorem]{Proposition}

\newtheorem*{lemma*}{Lemma}
\newtheorem{corollary}[theorem]{Corollary}

\newtheorem*{conjecture*}{Conjecture}

\theoremstyle{definition}

\newtheorem{example}[theorem]{Example}

\theoremstyle{remark}
\newtheorem{remark}[theorem]{Remark}

\numberwithin{equation}{section}

\setcounter{tocdepth}{4}
\setcounter{secnumdepth}{4}

\begin{document}
\title[Calabi-Yau metrics on canonical bundles of flag manifolds]{Calabi-Yau metrics on canonical bundles of complex flag manifolds}

%\author{Eder M .Correa and Lino Grama}
%
%\address{Imecc -Unicamp, Departamento de Matem\'{a}tica. Rua S\'{e}rgio Buarque de Holanda,
%651, Cidade Universit\'{a}ria Zeferino Vaz. 13083-859 Campinas - SP, Brasil}
%\address{E-mail: {\rm linograma@gmail.com.}}

\author{Eder M. Correa}
\address{IMPA, Estrada Dona Castorina 110, Rio de Janeiro, 22460-320, Brazil}
\email{edermoraes@impa.br}

\author{Lino Grama}
\address{IMECC-Unicamp, Departamento de Matem\'{a}tica. Rua S\'{e}rgio Buarque de Holanda,
651, Cidade Universit\'{a}ria Zeferino Vaz. 13083-859, Campinas - SP, Brazil}
%\address{E-mail: {\rm linograma@gmail.com.}}
\email{linograma@gmail.com}

\begin{abstract} 
In the present paper we provide a description of complete Calabi-Yau metrics on the canonical bundle of generalized complex flag manifolds. By means of Lie theory we give an explicit description of complete Ricci-flat K\"{a}hler metrics obtained through the Calabi ansatz technique. We use this approach to provide several explicit examples of noncompact complete Calabi-Yau manifolds, these examples include canonical bundles of non-toric flag manifolds (e.g. Grassmann manifolds and full flag manifolds).
\end{abstract}

\maketitle
\tableofcontents

\section{Introduction}

%\section{The structures of $\mathrm{LG}(2)$}\label{structures} 

%%%%%%%%%%%%%%%%%%%%

%\section{Calabi ansatz technique and generalized complex flag manifolds}

%\setcounter{equation}{0}%
\label{S8.3}

%%%%%%%%%%%%%%%%%%%%%%%%%%%%%%%%%%%%%%%%%%%%%%%%%%%%%%%%%%%%%%%%%%%%%%%%%%%%%%%%%%%%%%%%%%%%%%%%%%%%%%%%%%%%%%%%%%%%%%%%%%%%%%%%%%%%%%%%%%%%%%%%%%%%%%%%%%%%%%%%%%%%%%%

In this paper we present a complete description of how to construct complete Ricci-flat K\"{a}hler metrics on the canonical line bundle of complex flag manifolds. The main tool which we apply in the context of complex flag manifolds is the Calabi ansatz technique \cite{CALABIANSATZ}, which provides a constructive method to obtain K\"{a}hler-Einstein metrics on total spaces of holomorphic vector bundles over K\"{a}hler-Einstein manifolds.  The basic idea in Calabi's construction is to use the Hermitian vector bundle structure over a K\"{a}hler-Einstein manifold to reduce the K\"{a}hler-Einstein condition, which is generally a Monge-Amp\`{e}re equation, to an ordinary differential equation \cite{KOBAYASHIANSATZ}.

A big family of well known examples of Calabi-Yau metrics provided by Calabi's technique are obtained in the setting of toric Fano manifolds (see for instance \cite{abreu}, \cite{KAWAI}). This family includes complex projective spaces, which are also examples of flag manifolds. 

Our main motivation for studying Calabi's construction in the setting of flag manifolds is the possibility of using the algebraic background which underlies this class of homogeneous spaces (semi-simple Lie theory, representation theory, etc.) to provide some explicitly description of geometric structures, in this case Calabi-Yau structures.

One of the most well known examples on which the construction of Ricci-flat K\"{a}hler metrics and scalar flat metrics has been widely studied is given by the total space $$\mathscr{O}(-k) \to \mathbb{C}{\rm{P}}^{n},$$ where $k > 0$, and on its disk bundle $\mathscr{D}(-k) \subset \mathscr{O}(-k)$ (see for instance \cite{abreu}, \cite{LEBRUN}, \cite{PEDERSEN}). Since $\mathbb{C}{\rm{P}}^{n}$ defines an example of toric manifold which is also a flag manifold, it deserves a special attention.

For the canonical bundle $K_{\mathbb{C}{\rm{P}}^{n}}=\mathscr{O}(-n-1)$ the toric geometry which underlies $\mathbb{C}{\rm{P}}^{n}$ allows us to describe $c_{1}(\mathbb{C}{\rm{P}}^{n})$ by means of combinatorial elements (Polytope facets \cite{abreu}, \cite{GKAHLER}). Therefore, we have a suitable description for all elements involved in Calabi's construction, namely, curvature and connection forms on $K_{\mathbb{C}{\rm{P}}^{n}}$, and K\"{a}hler-Einsten metric (Fubini-Study) on $\mathbb{C}{\rm{P}}^{n}$.

It is worth to point out that the same ideas briefly described above for $K_{\mathbb{C}{\rm{P}}^{n}}$ also can be applied in the general setting of canonical line bundles of toric K\"{a}hler-Einstein Fano manifolds \cite{KAWAI}.

In general complex flag manifolds are not toric manifolds, thus we do not have a combinatorial data to describe the geometric structures involved in Calabi's method. However, in this work we show how to overcome this problem by using elements of representation theory of simple Lie algebras. The key point in our approach is to employ elements of Lie theory to describe the local potential which defines the first Chern class $c_{1}(X_{P})$ for an arbitrary complex flag manifold $X_{P} = G^{\mathbb{C}}/P$. This description allows us to make explicit computations for Calabi-Yau metrics which can be constructed on the total space $K_{X_{P}} = \det(T^{\ast}X_{P})$ over $X_{P}$.

The main result which we establish in this work is the following (see Section \ref{sec-CY} for details):

\begin{theorem-non}

Let $(X_{P},\omega_{X_{P}})$ be a complex flag manifold associated to some parabolic subgroup $P = P_{\Theta} \subset G^{\mathbb{C}}$, such that $\dim_{\mathbb{C}}(X_{P}) = n$. Then, the total space $K_{X_{P}}$ admits a complete Ricci-flat K\"{a}hler metric with K\"{a}hler form given by
$$\omega_{CY} = (2\pi u + C)^{\frac{1}{n+1}} \pi^{\ast}\omega_{X_{P}} - \frac{\sqrt{-1}}{n+1}(2\pi u + C)^{-\frac{n}{n+1}}\nabla \xi \wedge \overline{\nabla \xi},
$$
where $C>0$ is some positive constant and $u \colon K_{X_{P}} \to \mathbb{R}_{\geq 0}$ is given by $u([g,\xi]) = |\xi|^{2}$, $\forall [g,\xi] \in K_{X_{P}}$. Furthermore, the above K\"{a}hler form is completely determined by the quasi-potential $\varphi \colon G^{\mathbb{C}} \to \mathbb{R}$ defined by
$$\varphi(g) = \displaystyle  \frac{1}{2\pi} \log \Big (\prod_{\alpha \in \Sigma \backslash \Theta}||gv_{\omega_{\alpha}}^{+}||^{2\langle \delta_{P},h_{\alpha}^{\vee} \rangle} \Big),$$
for every $g \in G^{\mathbb{C}}$. Therefore $(K_{X_{P}},\omega_{CY})$ is a (complete) noncompact Calabi-Yau manifold with Calabi-Yau metric $\omega_{CY}$ completely determined by $\Theta \subset \Sigma$, where $\Sigma$ denotes the set of simple roots of $\mathfrak{g}^\mathbb{C}= {\rm Lie }\, (G^{\mathbb{C}})$.
\end{theorem-non}

The idea of the proof of the above theorem is to combine the Calabi ansatz technique \cite{CALABIANSATZ} with the description of quasi-potentials for invariant K\"ahler metrics on flag manifolds provided by Azad and Biswas in \cite{AZAD}. By means of this idea we can describe explicit examples of complete Calabi-Yau metrics. In this way, besides recovering many well known results related to $X_{P} = \mathbb{C}{\rm{P}}^{n}$ (which are obtained from different methods), we also establish a new class of examples in the context of non-toric flag manifolds.

This paper is organized as follows: In Section 2, we introduce the basic notation and results concerned with Chern classes of holomorphic line bundles over flag manifolds, then after that we prove Theorem 1. In Section 3, as an application of Theorem 1 we provide explicit examples of complete Calabi-Yau metrics on the total space of the canonical bundle for a huge class of complex flag manifolds, including Grassmann manifolds, Wallach's flag manifolds, partial flag manifolds and full flag manifolds of classical Lie groups $\rm{SL}(n,\mathbb{C})$, $\rm{SO}(n,\mathbb{C})$ and $\rm{Sp}(2n,\mathbb{C})$.

In Section 4, we discuss some questions which arise when we try to apply Theorem 1 in the setting of complex flag manifolds associated to exceptional Lie groups. These manifolds include the complex Cayley plane and the Freudenthal variety.

%%%%%%%%%%%%%%%%%%%%%%%%%%%%%%%%%%%%%%%%%%%%%%%%%%%%%%%%%%%%%%%%%%%%%%%%%%%%%%%%%%%%%%%%%%%%%%%%%%%%%%%%%%%%%%%%%%%%%%%%%%%%%%%%%%%%%%%%%%%%%%%%%%%%%%%%%%%%%%%%%%%%%%%

\section{Calabi-Yau metrics on the canonical bundle of flag manifolds}
\setcounter{equation}{0}%
\label{S8.3Sub8.3.2}

%%%%%%%%%%%%%%%%%%%%%%%%%%%%%%%%%%%%%%%%%%%%%%%%%%%%%%%%%%%%%%%%%%%%%%%%%%%%%%%%%%%%%%%%%%%%%%%%%%%%%%%%%%%%%%%%%%%%%%%%%%%%%%%%%%%%%%%%%%%%%%%%%%%%%%%%%%%%%%%%%%%%%%%
\subsection{Preliminary results: line bundles over flag manifolds}

The results which we will cover in this subsection can be found in \cite{AZAD} and \cite[Appendix D.1]{EDERTHESIS}.

Let $\mathfrak{g}^{\mathbb{C}}$ be a complex simple Lie algebra, by fixing a Cartan subalgebra $\mathfrak{h}$ and a simple root system $\Sigma \subset \mathfrak{h}^{\ast}$, we have a decomposition of $\mathfrak{g}^{\mathbb{C}}$ given by
\begin{center}
$\mathfrak{g}^{\mathbb{C}} = \mathfrak{n}^{-} \oplus \mathfrak{h} \oplus \mathfrak{n}^{+}$, 
\end{center}
where $\mathfrak{n}^{-} = \sum_{\alpha \in \Pi^{-}}\mathfrak{g}_{\alpha}$ and $\mathfrak{n}^{+} = \sum_{\alpha \in \Pi^{+}}\mathfrak{g}_{\alpha}$, here we denote by $\Pi = \Pi^{+} \cup \Pi^{-}$ the root system associated to the simple root system $\Sigma = \{\alpha_{1},\ldots,\alpha_{l}\} \subset \mathfrak{h}^{\ast}$. We also denote by $\kappa$ the Cartan-Killing form of $\mathfrak{g}^{\mathbb{C}}$.

Now, given $\alpha \in \Pi^{+}$ we have $h_{\alpha} \in \mathfrak{h}$ such  that $\alpha = \kappa(\cdot,h_{\alpha})$, we can choose $x_{\alpha} \in \mathfrak{g}_{\alpha}$ and $y_{\alpha} \in \mathfrak{g}_{-\alpha}$ such that $[x_{\alpha},y_{\alpha}] = h_{\alpha}$. For every $\alpha \in \Sigma$ we can take 
$$h_{\alpha}^{\vee} = \frac{2}{\kappa(h_{\alpha},h_{\alpha})}h_{\alpha},$$ 
from this we have the fundamental weights $\{\omega_{\alpha} \ | \ \alpha \in \Sigma\} \subset \mathfrak{h}^{\ast}$, where $\omega_{\alpha}(h_{\beta}^{\vee}) = \delta_{\alpha \beta}$, $\forall \alpha, \beta \in \Sigma$. We denote by $\Lambda_{\mathbb{Z}_{\geq 0}}^{\ast} = \bigoplus_{\alpha \in \Sigma}\mathbb{Z}_{\geq 0}\omega_{\alpha}$ the set of integral dominant weights of $\mathfrak{g}^{\mathbb{C}}$.

From the representation theory of semi-simple Lie algebras, given $\mu \in \Lambda_{\mathbb{Z}_{\geq 0}}^{\ast}$ we have an irreducible $\mathfrak{g}^{\mathbb{C}}$-module $V(\mu)$ with highest weight $\mu$, we denote by $v_{\mu}^{+} \in V(\mu)$ the highest weight vector associated to $\mu \in  \Lambda_{\mathbb{Z}_{\geq 0}}^{\ast}$. 

Let $G^{\mathbb{C}}$ be a connected, simply connected and complex Lie group with simple Lie algebra $\mathfrak{g}^{\mathbb{C}}$, and consider $G \subset G^{\mathbb{C}}$ as being a compact real form for $G^{\mathbb{C}}$. Given a parabolic subgroup $P \subset G^{\mathbb{C}}$, without loss of generality, we can suppose

\begin{center}
$P  = P_{\Theta}$, \ for some \ $\Theta \subset \Sigma$.
\end{center}
By definition we have $P_{\Theta} = N_{G^{\mathbb{C}}}(\mathfrak{p}_{\Theta})$, where ${\text{Lie}}(P_{\Theta}) = \mathfrak{p}_{\Theta} \subset \mathfrak{g}^{\mathbb{C}}$ is given by

\begin{center}

$\mathfrak{p}_{\Theta} = \mathfrak{n}^{+} \oplus \mathfrak{h} \oplus \mathfrak{n}(\Theta)^{-}$, \ with \ $\mathfrak{n}(\Theta)^{-} = \displaystyle \sum_{\alpha \in \langle \Theta \rangle^{-}} \mathfrak{g}_{\alpha}$.

\end{center}
For our purpose it will be useful to consider the following basic subgroups

\begin{center}

$T^{\mathbb{C}} \subset B \subset P \subset G^{\mathbb{C}}$.

\end{center}

For each element in the above chain of subgroups we have the following characterization

\begin{itemize}

\item $T^{\mathbb{C}} = \exp(\mathfrak{h})$,  \ \ (complex torus)

\item $B = N^{+}T^{\mathbb{C}}$, where $N^{+} = \exp(\mathfrak{n}^{+})$, \ \ (Borel subgroup)

\item $P = P_{\Theta} = N_{G^{\mathbb{C}}}(\mathfrak{p}_{\Theta})$, for some $\Theta \subset \Sigma \subset \mathfrak{h}^{\ast}$. \ \ (parabolic subgroup)

\end{itemize}
Associated to the above data we will be concerned to study the {\it complex generalized flag manifold}  
$$
X_{P} = G^{\mathbb{C}} / P = G /G \cap P.
$$
The following Theorem allows us to describe all $G$-invariant K\"{a}hler structures on $X_{P}$
\begin{theorem}[Azad-Biswas,\cite{AZAD}]
\label{AZADBISWAS}
Let $\omega \in \Omega^{(1,1)}(X_{P})^{G}$ be a closed invariant real $(1,1)$-form, then we have

\begin{center}

$\pi^{\ast}\omega = \sqrt{-1}\partial \overline{\partial}\varphi$,

\end{center}
where $\pi \colon G^{\mathbb{C}} \to X_{P}$ and $\varphi \colon G^{\mathbb{C}} \to \mathbb{R}$ is given by 
\begin{center}
$\varphi(g) = \displaystyle \sum_{\alpha \in \Sigma \backslash \Theta}c_{\alpha}\log||gv_{\omega_{\alpha}}^{+}||$, 
\end{center}
with $c_{\alpha} \in \mathbb{R}_{\geq 0}$, $\forall \alpha \in \Sigma \backslash \Theta$. Conversely, every function $\varphi$ as above defines a closed invariant real $(1,1)$-form $\omega_{\varphi} \in \Omega^{(1,1)}(X_{P})^{G}$. Moreover, if $c_{\alpha} > 0$,  $\forall \alpha \in \Sigma \backslash \Theta$, then $\omega_{\varphi}$ defines a K\"{a}hler form on $X_{P}$.

\end{theorem}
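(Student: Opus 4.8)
The plan is to realize each summand $\log\|gv_{\omega_{\alpha}}^{+}\|$ as a K\"{a}hler potential pulled back from a projective space, and then to pin $\omega$ down inside its cohomology class using the homogeneity of $X_{P}$. First I would fix, for every $\alpha \in \Sigma \backslash \Theta$, a $G$-invariant Hermitian inner product on $V(\omega_{\alpha})$ and consider the orbit map $g \mapsto gv_{\omega_{\alpha}}^{+}$. Because $v_{\omega_{\alpha}}^{+}$ is a highest weight vector, the line $[v_{\omega_{\alpha}}^{+}] \in \mathbb{P}(V(\omega_{\alpha}))$ is $B$-stable, and more precisely $P = P_{\Theta}$ is exactly the common stabilizer of these lines as $\alpha$ runs over $\Sigma \backslash \Theta$; hence one obtains holomorphic maps $\iota_{\alpha} \colon X_{P} \to \mathbb{P}(V(\omega_{\alpha}))$, $gP \mapsto [gv_{\omega_{\alpha}}^{+}]$, whose product $\iota = (\iota_{\alpha})_{\alpha \in \Sigma \backslash \Theta}$ is a $G$-equivariant holomorphic embedding. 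Since $g \mapsto gv_{\omega_{\alpha}}^{+}$ is a global lift of $\iota_{\alpha} \circ \pi$, the standard formula for the Fubini--Study potential gives on $G^{\mathbb{C}}$ the identity $\sqrt{-1}\,\partial\overline{\partial}\log\|gv_{\omega_{\alpha}}^{+}\|^{2} = \pi^{\ast}(\iota_{\alpha}^{\ast}\omega_{\mathrm{FS}})$, where $\omega_{\mathrm{FS}}$ is the Fubini--Study form; in particular $\iota_{\alpha}^{\ast}\omega_{\mathrm{FS}}$ is an invariant representative of a positive multiple of $c_{1}(\mathscr{L}_{\omega_{\alpha}})$.

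For the converse, the identity just displayed shows that $\sqrt{-1}\,\partial\overline{\partial}\varphi$ is, up to the factor of $2$ relating $\log\|\cdot\|$ to $\log\|\cdot\|^{2}$, the pullback under $\pi$ of the form $\omega_{\varphi} := \sum_{\alpha} c_{\alpha}\,\iota_{\alpha}^{\ast}\omega_{\mathrm{FS}}$ on $X_{P}$, so that $\pi^{\ast}\omega_{\varphi} = \sqrt{-1}\,\partial\overline{\partial}\varphi$ as asserted. Left $G$-invariance of the inner products makes each $\iota_{\alpha}$ equivariant and $\omega_{\mathrm{FS}}$ invariant, whence $\omega_{\varphi} \in \Omega^{(1,1)}(X_{P})^{G}$ and $\omega_{\varphi}$ is closed. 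When every $c_{\alpha} > 0$, positivity of $\omega_{\varphi}$ follows because each $\iota_{\alpha}^{\ast}\omega_{\mathrm{FS}}$ is positive semi-definite while the differentials $d\iota_{\alpha}$ jointly separate tangent vectors (as $\iota$ is an immersion), so the sum is positive definite and $\omega_{\varphi}$ is a K\"{a}hler form.

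It remains to show that an arbitrary closed invariant real $(1,1)$-form $\omega$ arises in this way. Since $X_{P}$ is a simply connected compact homogeneous K\"{a}hler manifold, $H^{1,1}(X_{P},\mathbb{R})$ is spanned by the classes $c_{1}(\mathscr{L}_{\omega_{\alpha}})$, $\alpha \in \Sigma \backslash \Theta$, so one may write $[\omega] = \sum_{\alpha} c_{\alpha}\,[\iota_{\alpha}^{\ast}\omega_{\mathrm{FS}}]$ with $c_{\alpha} \in \mathbb{R}$. Setting $\omega' = \sum_{\alpha} c_{\alpha}\,\iota_{\alpha}^{\ast}\omega_{\mathrm{FS}}$, the difference $\eta := \omega - \omega'$ is an exact invariant form of type $(1,1)$, so by the $\partial\overline{\partial}$-lemma $\eta = \sqrt{-1}\,\partial\overline{\partial} f$ for some $f \in C^{\infty}(X_{P})$. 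Averaging $f$ over the compact group $G$ — which commutes with $\sqrt{-1}\,\partial\overline{\partial}$ — lets me take $f$ to be $G$-invariant, hence constant on the homogeneous space $X_{P}$; therefore $\eta = 0$ and $\omega = \omega'$. Pulling back and invoking the first identity yields $\pi^{\ast}\omega = \sqrt{-1}\,\partial\overline{\partial}\big(\sum_{\alpha} c_{\alpha} \log\|gv_{\omega_{\alpha}}^{+}\|^{2}\big)$, which is the claimed expression once the factor of $2$ is absorbed into the coefficients.

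The step I expect to be most delicate is the first one: verifying, in the conventions fixed above for $\mathfrak{p}_{\Theta}$ and the fundamental weights, that $P_{\Theta}$ really is the common line-stabilizer of the $[v_{\omega_{\alpha}}^{+}]$ (so that each $\iota_{\alpha}$ is well defined precisely for $\alpha \in \Sigma \backslash \Theta$) and that $\log\|gv_{\omega_{\alpha}}^{+}\|^{2}$ is exactly the pulled-back Fubini--Study potential. Everything downstream — the descent in the converse, the cohomological matching in the forward direction, and the positivity count — rests on this Borel--Weil identification being compatible with the chosen sign and normalization conventions.
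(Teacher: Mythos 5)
This theorem is imported verbatim from Azad--Biswas \cite{AZAD}; the paper offers no proof of its own, so there is nothing internal to compare against. Judged on its own terms, your argument is essentially correct and follows the standard route: realize each $\log\|gv_{\omega_{\alpha}}^{+}\|$ as a pulled-back Fubini--Study potential via the equivariant maps $\iota_{\alpha}\colon X_{P}\to\mathbb{P}(V(\omega_{\alpha}))$, use that $H^{2}(X_{P},\mathbb{R})=H^{1,1}(X_{P},\mathbb{R})$ is spanned by the classes $[\iota_{\alpha}^{\ast}\omega_{\mathrm{FS}}]$ (the paper's Proposition \ref{C8S8.2Sub8.2.3P8.2.6}), and kill the exact discrepancy by the $\partial\overline{\partial}$-lemma plus averaging over the compact group, since a $G$-invariant function on a $G$-homogeneous space is constant. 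The step you flag as delicate is indeed the load-bearing one but is standard highest-weight theory: the stabilizer of the line $[v_{\omega_{\alpha}}^{+}]$ is the maximal parabolic $P_{\Sigma\setminus\{\alpha\}}$, their intersection over $\alpha\in\Sigma\setminus\Theta$ is $P_{\Theta}$, and the product map is the projective embedding attached to the $P_{\Theta}$-regular weight $\sum_{\alpha\in\Sigma\setminus\Theta}\omega_{\alpha}$, which gives the immersion property you need for positivity. Two small points. First, your argument yields $c_{\alpha}\in\mathbb{R}$ for a general closed invariant real $(1,1)$-form, not $c_{\alpha}\in\mathbb{R}_{\geq 0}$ as the statement reads; this is a defect of the transcribed statement (apply it to $-\omega$ for $\omega$ K\"{a}hler), not of your proof, and the correct conclusion is exactly what you obtain. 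Second, you should be slightly more careful in asserting that $\sqrt{-1}\,\partial\overline{\partial}\varphi$ \emph{descends}: the clean way to say it, which you essentially do, is that it is manifestly equal to $\pi^{\ast}\big(\sum_{\alpha}c_{\alpha}'\,\iota_{\alpha}^{\ast}\omega_{\mathrm{FS}}\big)$ because $g\mapsto gv_{\omega_{\alpha}}^{+}$ is a global holomorphic lift of $\iota_{\alpha}\circ\pi$, so no separate descent argument is required.
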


\begin{remark}
It is worth to point out that the norm $|| \cdot ||$ in the above Theorem is a norm induced by a fixed $G$-invariant inner product on $V(\omega_{\alpha})$, $\forall \alpha \in \Sigma \backslash \Theta$. 
\end{remark}
Let $X_{P}$ be a flag manifold associated to some parabolic subgroup $P = P_{\Theta} \subset G^{\mathbb{C}}$. According to the above Theorem, by taking a fundamental weight $\omega_{\alpha} \in \Lambda_{\mathbb{Z}_{\geq0}}^{\ast}$ such that $\alpha \in \Sigma \backslash \Theta$, we can associate to this weight a closed real $G$-invariant $(1,1)$-form $\eta_{\alpha} \in \Omega^{(1,1)}(X_{P})^{G}$ which satisfies 
$$\pi^{\ast}\eta_{\alpha} = \sqrt{-1}\partial \overline{\partial} \varphi_{\omega_{\alpha}},$$
where $\pi \colon G^{\mathbb{C}} \to G^{\mathbb{C}} / P = X_{P}$ and $\varphi_{\omega_{\alpha}}(g) = \displaystyle \frac{1}{2\pi}\log||gv_{\omega_{\alpha}}^{+}||^{2}$.

%\begin{center}

%\varphi_{\omega_{\alpha}}(g) = \displaystyle \frac{1}{\pi}\log||gv_{\omega_{\alpha}}^{+}|| = \displaystyle \frac{1}{2\pi}\log||%gv_{\omega_{\alpha}}^{+}||^{2}$,

%\end{center}

%Now we will give a descripition of the canonical bundle .....

Since each $\omega_{\alpha} \in \Lambda_{\mathbb{Z}_{\geq 0}}^{\ast}$ is an integral dominant weight, we can associate to it a holomorphic character $\chi_{\omega_{\alpha}} \colon T^{\mathbb{C}} \to \mathbb{C}^{\times}$, such that $(d\chi_{\omega_{\alpha}})_{e} = \omega_{\alpha}$, see for instance \cite[p. 466]{TAYLOR}. Given a parabolic subgroup $P \subset G^{\mathbb{C}}$, we can take an extension $\chi_{\omega_{\alpha}} \colon P \to \mathbb{C}^{\times}$ and define a holomorphic line bundle

\begin{equation}
\label{C8S8.2Sub8.2.1Eq8.2.4}
L_{\chi_{\omega_{\alpha}}} =  G^{\mathbb{C}} \times_{\chi_{\omega_{\alpha}}} \mathbb{C}_{-\omega_{\alpha}},
\end{equation}\\
as a vector bundle associated to the $P$-principal bundle $P \hookrightarrow G^{\mathbb{C}} \to G^{\mathbb{C}}/P$.
\begin{remark}
\label{remarkcocycle}
In the above description we consider $\mathbb{C}_{-\omega_{\alpha}}$ as a $P$-space with the action $pz = \chi_{\omega_{\alpha}}(p)^{-1}z$, $\forall p \in P$ and $\forall z \in \mathbb{C}$. Therefore, in terms of $\check{C}$ech cocycle, if we consider an open cover $X_{P} = \bigcup_{i \in I}U_{i}$ and
$G^{\mathbb{C}} = \{(U_{i})_{\alpha \in I}, \psi_{ij} \colon U_{i} \cap U_{j} \to P\}$, then we have 

$$L_{\chi_{\omega_{\alpha}}} = \Big \{(U_{i})_{i \in I},\chi_{\omega_{\alpha}}^{-1} \circ \psi_{i j} \colon U_{i} \cap U_{j} \to \mathbb{C}^{\times} \Big \},$$
thus $L_{\chi_{\omega_{\alpha}}} = \{g_{ij}\} \in \check{H}^{1}(X_{P},\mathscr{O}_{X_{P}}^{\ast})$, with $g_{ij} = \chi_{\omega_{\alpha}}^{-1} \circ \psi_{i j}$, where $i,j \in I$.
\end{remark}

The next results provide a complete description of holomorphic line bundles over complex flag manifolds
\begin{proposition}
\label{C8S8.2Sub8.2.3P8.2.7}
Let $X_{P}$ be a flag manifold associated to some parabolic subgroup $P = P_{\Theta}\subset G^{\mathbb{C}}$. Then for every fundamental weight $\omega_{\alpha} \in \Lambda_{\mathbb{Z}_{\geq 0}}^{\ast}$, such that $\alpha \in \Sigma \backslash \Theta$, we have
\begin{equation}
\label{C8S8.2Sub8.2.3Eq8.2.28}
c_{1}(L_{\chi_{\omega_{\alpha}}}) = [\eta_{\alpha}].
\end{equation}
%where $L_{\chi_{\omega_{\alpha}}} = G^{\mathbb{C}} \times_{\chi_{\omega_{\alpha}}} \mathbb{C}_{-\omega_{\alpha}}$, and $\eta_{\alpha} \in \Omega^{(1,1)}(X_{P})$ is the $G$-invariant real $(1,1)$-form associated to the function defined by
%
%\begin{center}
%
%$\varphi_{\omega_{\alpha}}(g) = \displaystyle \frac{1}{\pi}\log||gv_{\omega_{\alpha}}^{+}|| = \displaystyle \frac{1}{2\pi}\log||gv_{\omega_{\alpha}}^{+}||^{2}$,
%\end{center}
%for all $g \in G^{\mathbb{C}}$.
\end{proposition}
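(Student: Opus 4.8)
The plan is to compute $c_1(L_{\chi_{\omega_\alpha}})$ by Chern--Weil theory, using the natural Hermitian metric on $L_{\chi_{\omega_\alpha}}$ induced by the fixed $G$-invariant inner product on $V(\omega_\alpha)$ together with the highest weight vector $v_{\omega_\alpha}^{+}$, and then recognizing the resulting local potential as exactly the quasi-potential $\varphi_{\omega_\alpha}$ from Azad--Biswas' description (Theorem \ref{AZADBISWAS}). Concretely, I will produce a local metric representative so that the Chern form equals $\sqrt{-1}\,\partial\bar\partial\big(s_i^{\ast}\varphi_{\omega_\alpha}\big)$ on each chart, which is precisely $\eta_\alpha$ restricted to that chart.

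First I would fix an open cover $X_{P}=\bigcup_{i\in I}U_i$ together with holomorphic local sections $s_i\colon U_i\to G^{\mathbb{C}}$ of the projection $\pi\colon G^{\mathbb{C}}\to X_{P}$; such sections exist because $\pi$ is a holomorphic principal $P$-bundle, locally trivial over the (translated) big Bruhat cells. On overlaps one has $s_j=s_i\cdot\psi_{ij}$ with $\psi_{ij}\colon U_i\cap U_j\to P$ as in Remark \ref{remarkcocycle}, and by that same remark the transition cocycle of $L_{\chi_{\omega_\alpha}}$ is $g_{ij}=\chi_{\omega_\alpha}^{-1}\circ\psi_{ij}$. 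The decisive representation-theoretic input is that the whole parabolic acts on the highest weight vector through the character, i.e. $p\,v_{\omega_\alpha}^{+}=\chi_{\omega_\alpha}(p)\,v_{\omega_\alpha}^{+}$ for every $p\in P=P_\Theta$. This holds precisely because $\alpha\in\Sigma\setminus\Theta$: the factor $\mathfrak{n}^{+}$ annihilates $v_{\omega_\alpha}^{+}$, the torus acts by $\omega_\alpha$, and for $\gamma\in\Theta$ one has $\langle\omega_\alpha,h_\gamma^{\vee}\rangle=\delta_{\alpha\gamma}=0$, so the corresponding $\mathfrak{sl}_2$-strings have length zero and $\mathfrak{n}(\Theta)^{-}$ annihilates $v_{\omega_\alpha}^{+}$; hence $\mathfrak{p}_\Theta$ preserves the line $\mathbb{C}v_{\omega_\alpha}^{+}$ and $P_\Theta$ acts on it by $\chi_{\omega_\alpha}$.

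Next I would define, on each $U_i$, the positive function $h_i=||s_i v_{\omega_\alpha}^{+}||^{-2}$, using the $G$-invariant norm on $V(\omega_\alpha)$. From $s_j v_{\omega_\alpha}^{+}=\chi_{\omega_\alpha}(\psi_{ij})\,s_i v_{\omega_\alpha}^{+}$ one gets $h_j=|\chi_{\omega_\alpha}(\psi_{ij})|^{-2}h_i=|g_{ij}|^{2}h_i$, which is exactly the transformation law making $\{h_i\}$ a Hermitian metric on the bundle with cocycle $\{g_{ij}\}$, namely on $L_{\chi_{\omega_\alpha}}$ itself (and not on its dual). The curvature of the associated Chern connection is then $\Theta=-\partial\bar\partial\log h_i=\partial\bar\partial\log||s_i v_{\omega_\alpha}^{+}||^{2}$ on $U_i$, so
$$c_1(L_{\chi_{\omega_\alpha}})=\frac{\sqrt{-1}}{2\pi}[\Theta]=\Big[\sqrt{-1}\,\partial\bar\partial\big(s_i^{\ast}\varphi_{\omega_\alpha}\big)\Big],$$
where $\varphi_{\omega_\alpha}(g)=\tfrac{1}{2\pi}\log||gv_{\omega_\alpha}^{+}||^{2}$. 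Finally, since $\pi\circ s_i=\mathrm{id}_{U_i}$ and $s_i$ is holomorphic, pulling back the defining relation $\pi^{\ast}\eta_\alpha=\sqrt{-1}\,\partial\bar\partial\varphi_{\omega_\alpha}$ along $s_i$ yields $\eta_\alpha|_{U_i}=\sqrt{-1}\,\partial\bar\partial\big(s_i^{\ast}\varphi_{\omega_\alpha}\big)$; as the Chern form and $\eta_\alpha$ thus agree on every chart, they coincide as global forms and $c_1(L_{\chi_{\omega_\alpha}})=[\eta_\alpha]$.

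I expect the only genuinely delicate points to be bookkeeping: matching the sign convention of the $P$-action $pz=\chi_{\omega_\alpha}(p)^{-1}z$ from Remark \ref{remarkcocycle} so that $h_i=||s_iv_{\omega_\alpha}^{+}||^{-2}$ (rather than $||s_iv_{\omega_\alpha}^{+}||^{+2}$) defines a metric on $L_{\chi_{\omega_\alpha}}$ and not on its dual, and verifying the representation-theoretic claim $p\,v_{\omega_\alpha}^{+}=\chi_{\omega_\alpha}(p)\,v_{\omega_\alpha}^{+}$, which is where the hypothesis $\alpha\notin\Theta$ is essential. Once these are in place the identification of the Chern form with $\eta_\alpha$ is a routine Chern--Weil computation. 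Equivalently, one could phrase the whole argument as the statement that the orbit map $gP\mapsto[gv_{\omega_\alpha}^{+}]$ embeds $X_{P}$ into $\mathbb{P}(V(\omega_\alpha))$ and pulls $\mathscr{O}(1)$ back to $L_{\chi_{\omega_\alpha}}$, with $\eta_\alpha$ the pullback of the Fubini--Study form.
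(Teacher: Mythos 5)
Your argument is correct and is essentially the same computation the paper relies on: the paper itself defers the proof of this proposition to Azad--Biswas and \cite[Appendix D.1]{EDERTHESIS}, but the identical cocycle-plus-Chern--Weil calculation (local positive functions built from $\|s_i v^{+}_{\omega_\alpha}\|^{2}$, the transformation law $q_j=|\chi\circ\psi_{ij}|^{2}q_i$, and the Chern curvature given by $\partial\bar\partial$ of the quasi-potential) is exactly what the paper carries out in Section~\ref{sec-CY} when it constructs the Hermitian structure on $K_{X_P}$ and obtains $F_{\nabla}=2\pi\sqrt{-1}\,\omega_{X_P}$. Your bookkeeping of the sign (the metric $\|s_iv^{+}_{\omega_\alpha}\|^{-2}$ living on $L_{\chi_{\omega_\alpha}}$ rather than its dual) and of the representation-theoretic input $p\,v^{+}_{\omega_\alpha}=\chi_{\omega_\alpha}(p)\,v^{+}_{\omega_\alpha}$ for $p\in P_{\Theta}$ is consistent with the conventions of Remark~\ref{remarkcocycle}.
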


\begin{proposition}
\label{C8S8.2Sub8.2.3P8.2.6}
Let $X_{P}$ be a flag manifold associated to some parabolic subgroup $P = P_{\Theta}\subset G^{\mathbb{C}}$. Then we have

\begin{center}

${\text{Pic}}(X_{P}) = H^{1,1}(X_{P},\mathbb{Z}) = H^{2}(X_{P},\mathbb{Z}) = \displaystyle \bigoplus_{\alpha \in \Sigma \backslash \Theta}\mathbb{Z}[\eta_{\alpha}]$.

\end{center}

\end{proposition}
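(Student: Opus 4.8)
The plan is to establish the three equalities in the order they are written, by first determining the topology of $X_{P}$ from its Bruhat (Schubert) cell decomposition, then passing from $H^{2}(X_{P},\mathbb{Z})$ to $\Pic(X_{P})$ via the exponential sheaf sequence, and finally recognizing the explicit generating set $\{[\eta_{\alpha}]\}$ by pairing it against the Schubert curves. Throughout I would use that $X_{P} = G^{\mathbb{C}}/P$ is a smooth projective variety.

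First I would recall that $X_{P}$ decomposes into Schubert cells indexed by the minimal-length coset representatives $W^{P} \subset W$, each cell being a complex affine space and hence of even real dimension. Consequently $H^{\mathrm{odd}}(X_{P},\mathbb{Z}) = 0$ and $H^{\ast}(X_{P},\mathbb{Z})$ is torsion-free, while $\rk H^{2}(X_{P},\mathbb{Z})$ equals the number of one-dimensional cells. These correspond to the length-one elements of $W^{P}$, namely the simple reflections $s_{\alpha}$ with $\alpha \notin \Theta$, so the rank is $|\Sigma \setminus \Theta|$. Dually, $H_{2}(X_{P},\mathbb{Z})$ is free of the same rank with basis the Schubert curves $\{C_{\alpha} \cong \mathbb{C}{\rm P}^{1} : \alpha \in \Sigma \setminus \Theta\}$, where $C_{\alpha}$ is the closure of the cell attached to $s_{\alpha}$.

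Next, since $X_{P}$ is a compact Kähler manifold with vanishing odd Betti numbers, the Hodge decomposition forces $H^{p,q}(X_{P}) = 0$ for $p \neq q$; in particular $H^{q}(X_{P},\mathscr{O}_{X_{P}}) = H^{0,q}(X_{P}) = 0$ for all $q > 0$, and $H^{2}(X_{P},\mathbb{C}) = H^{1,1}(X_{P})$. Feeding $H^{1}(\mathscr{O}_{X_{P}}) = H^{2}(\mathscr{O}_{X_{P}}) = 0$ into the long exact sequence of $0 \to \mathbb{Z} \to \mathscr{O}_{X_{P}} \to \mathscr{O}_{X_{P}}^{\ast} \to 0$ shows that the first Chern class is an isomorphism $\Pic(X_{P}) = H^{1}(X_{P},\mathscr{O}_{X_{P}}^{\ast}) \xrightarrow{\ \sim\ } H^{2}(X_{P},\mathbb{Z})$ (surjectivity from $H^{2}(\mathscr{O}_{X_{P}}) = 0$, injectivity from $H^{1}(\mathscr{O}_{X_{P}}) = 0$). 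As every integral class is then of type $(1,1)$, we also get $H^{1,1}(X_{P},\mathbb{Z}) = H^{2}(X_{P},\mathbb{Z})$, so all three groups coincide and are free abelian of rank $|\Sigma \setminus \Theta|$. It then remains only to exhibit $\{[\eta_{\alpha}]\}_{\alpha \in \Sigma \setminus \Theta}$ as a $\mathbb{Z}$-basis. Here I would invoke Proposition \ref{C8S8.2Sub8.2.3P8.2.7} to write $[\eta_{\alpha}] = c_{1}(L_{\chi_{\omega_{\alpha}}})$ and compute the intersection pairing against the Schubert curves: restricting $L_{\chi_{\omega_{\beta}}}$ to $C_{\alpha}$ and taking degrees gives $\langle [\eta_{\beta}], [C_{\alpha}] \rangle = \omega_{\beta}(h_{\alpha}^{\vee}) = \delta_{\alpha\beta}$, since $C_{\alpha}$ is the orbit of the $\mathfrak{sl}_{2}$-triple $(x_{\alpha},h_{\alpha},y_{\alpha})$ and the character $\chi_{\omega_{\beta}}$ restricts to it with weight $\omega_{\beta}(h_{\alpha}^{\vee})$. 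This Kronecker pairing exhibits $\{[\eta_{\alpha}]\}$ as the basis dual to $\{[C_{\alpha}]\}$, hence a $\mathbb{Z}$-basis, giving $\Pic(X_{P}) = \bigoplus_{\alpha \in \Sigma \setminus \Theta} \mathbb{Z}[\eta_{\alpha}]$.

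The step I expect to be the main obstacle is this last one: the rank count and the exponential-sequence argument are formal, but the \emph{integrality} — that $\{[\eta_{\alpha}]\}$ generates $H^{2}(X_{P},\mathbb{Z})$ over $\mathbb{Z}$ rather than merely over $\mathbb{Q}$ — rests on the precise degree computation $\deg(L_{\chi_{\omega_{\beta}}}|_{C_{\alpha}}) = \delta_{\alpha\beta}$, which requires carefully relating the weight $\omega_{\beta}$ to the restriction of the bundle along the $\mathbb{C}{\rm P}^{1}$ determined by $\alpha$. Equivalently, one may bypass the curve computation by invoking the isomorphism $\Pic(G^{\mathbb{C}}/P) \cong \mathfrak{X}(P)$ between the Picard group and the character lattice of $P$, which is freely generated by the $\chi_{\omega_{\alpha}}$ with $\alpha \notin \Theta$; but either route requires essentially the same input.
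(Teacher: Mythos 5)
Your argument is correct in substance and is the standard proof of this fact; note that the paper itself does not prove this proposition at all --- it is stated as a known result imported from Azad--Biswas \cite{AZAD} and \cite[Appendix D.1]{EDERTHESIS} --- so there is no internal proof to compare against. Your route (Bruhat cells $\Rightarrow$ $H^{\mathrm{odd}}=0$ and free $H^{2}$ of rank $|\Sigma\setminus\Theta|$; exponential sequence $\Rightarrow$ $\Pic\cong H^{2}(X_{P},\mathbb{Z})$; pairing $\deg\bigl(L_{\chi_{\omega_{\beta}}}|_{C_{\alpha}}\bigr)=\omega_{\beta}(h_{\alpha}^{\vee})=\delta_{\alpha\beta}$ against Schubert curves to get the dual basis) is exactly the expected one, and you correctly identify the degree computation as the point carrying the integrality.

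One justification you give is wrong as stated, though the conclusion it supports is true: the vanishing of the odd Betti numbers of a compact K\"ahler manifold does \emph{not} force $H^{p,q}=0$ for $p\neq q$ --- a K3 surface has $b_{1}=b_{3}=0$ but $H^{2,0}\neq 0$. What you actually need for the exponential-sequence step is $H^{1}(X_{P},\mathscr{O}_{X_{P}})=H^{2}(X_{P},\mathscr{O}_{X_{P}})=0$, and only the first of these follows from $b_{1}=0$. The correct (and equally short) justifications are: the decomposition into complex affine Schubert cells implies that $H^{\ast}(X_{P},\mathbb{Z})$ is generated by classes of algebraic cycles, which are of type $(p,p)$, so $H^{p,q}=0$ for $p\neq q$; or, independently, $X_{P}$ is Fano, so Kodaira vanishing applied to $K_{X_{P}}\otimes(-K_{X_{P}})$ (equivalently Borel--Weil--Bott for the trivial character) gives $H^{q}(X_{P},\mathscr{O}_{X_{P}})=0$ for all $q>0$. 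With that repair the proof is complete.
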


An important holomorphic line bundle to be considered over $X_{P}$ in our approach is its anticanonical bundle

\begin{center}
$-K_{X_{P}} = \det \big(T^{(1,0)}X_{P} \big) = \bigwedge^{(n,0)}(T^{(1,0)}X_{P} \big)$,
\end{center}
here we suppose $\dim_{\mathbb{C}}(X_{P}) = n$. 

In the context of complex flag manifolds the anticanonical line bundle can be described as follows: consider the identification

\begin{center}

$\mathfrak{m} = \displaystyle \sum_{\alpha \in \Pi^{+} \backslash \langle \Theta \rangle^{+}} \mathfrak{g}_{-\alpha} = T_{x_{0}}^{(1,0)}X_{P}$,
\end{center}
where $x_{0} = eP \in X_{P}$. We have the following characterization for $T^{(1,0)}X_{P}$

%\footnote{This characterization can be obtained as follows, first we consider the Maurer–Cartan form of $G^{\mathbb{C}} $, i.e. the map $\theta_{g} \colon T_{g}G^{\mathbb{C}} \to \mathfrak{g}^{\mathbb{C}}$, from this form we can define a map $\Psi \colon  G^{\mathbb{C}} \times_{P} \mathfrak{m} \to T^{(1,0)}X_{P}$ such that $\Psi([g,X]) = \pi_{\ast}(\theta_{g}^{-1}(X))$, $\forall [g,X] \in  G^{\mathbb{C}} \times_{P} \mathfrak{m}$, here $\pi \colon G^{\mathbb{C}} \to X_{P}$ denotes the canonical projection. A straightforward calculation show us that $\Psi$ defines an isomorphism of holomorphic vector bundles.} 

\begin{center}

$T^{(1,0)}X_{P} = G^{\mathbb{C}} \times_{P} \mathfrak{m}$,

\end{center}
here we observe that the twisted product on the right side above is obtained from the isotropy representation ${\rm{Ad}} \colon P \to {\rm{GL}}(\mathfrak{m})$ as an associated holomorphic vector bundle. 

%Therefore, one can check that  
 
%\footnote{It is worthwhile to point out that on the twisted product $G^{\mathbb{C}} \times_{\chi}\mathbb{C}$ we consider the representation $\chi \colon P \to {\rm{GL}}(1,\mathbb{C})$ obtained from an extension of $\chi \in {\text{Hom}}(T^{\mathbb{C}},\mathbb{C}^{\times})$, notice that $T^{\mathbb{C}} \subset P$. Therefore, $-K_{X_{P}}$ can be regarded as a homogeneous holomorphic line bundle associated to the principal bundle $G^{\mathbb{C}} \to G^{\mathbb{C}}/P$.} 
%%%%%

%\begin{center}
%
%$-K_{X_{P}} = \det \big(T^{(1,0)}X_{P} \big) = \det \big ( G^{\mathbb{C}} \times_{P} \mathfrak{m} \big ) =  G^{\mathbb{C}} \times_{\chi}\mathbb{C}$,
%
%\end{center}
%where $\chi \in {\text{Hom}}(T^{\mathbb{C}},\mathbb{C}^{\times})$ is a holomorphic character defined by 
%
%\begin{center}
%
%$\chi(\exp(h)) = {\mathrm{e}}^{-\delta_{P}(h)}$, 
%
%\end{center}

%For every $h \in \mathfrak{h}$ and 

Let us define $\delta_{P} \in \mathfrak{h}^{\ast}$ by

\begin{center}

$\delta_{P} = \displaystyle \sum_{\alpha \in \Pi^{+} \backslash \langle \Theta \rangle^{+}} \alpha$,

\end{center}
a straightforward computation shows that 

\begin{center}

$\det \circ {\text{Ad}} = \chi_{\delta_{P}}^{-1}$, 

\end{center}
from this we have the following result

%the proof of the above fact can be found in the appendix \ref{App E} on page \pageref{C8S8.2Sub8.2.3Eq8.2.30} of this work.

\begin{proposition}
\label{C8S8.2Sub8.2.3Eq8.2.35}
Let $X_{P}$ be a flag manifold associated to some parabolic subgroup $P = P_{\Theta}\subset G^{\mathbb{C}}$, then we have

$$-K_{X_{P}} = \det \big(T^{(1,0)}X_{P} \big) =\det \big ( G^{\mathbb{C}} \times_{P} \mathfrak{m} \big )= L_{\chi_{\delta_{P}}}.$$

\end{proposition}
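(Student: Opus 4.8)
The plan is to reduce the statement to the already-quoted identity $\det \circ \mathrm{Ad} = \chi_{\delta_{P}}^{-1}$ together with the functoriality of the determinant construction for associated bundles. First I would recall that for any finite-dimensional holomorphic $P$-representation $(\rho,V)$ the top exterior power commutes with the associated-bundle functor, that is
$$\det\big(G^{\mathbb{C}} \times_{P} V\big) = \textstyle\bigwedge^{\dim V}\big(G^{\mathbb{C}} \times_{P} V\big) = G^{\mathbb{C}} \times_{P}\big(\textstyle\bigwedge^{\dim V} V\big),$$
where $P$ acts on the one-dimensional space $\bigwedge^{\dim V} V$ through the character $\det \circ \rho$. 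Applying this to $V = \mathfrak{m}$ and $\rho = \mathrm{Ad}$, and using the identification $T^{(1,0)}X_{P} = G^{\mathbb{C}} \times_{P} \mathfrak{m}$, yields
$$-K_{X_{P}} = \det\big(T^{(1,0)}X_{P}\big) = G^{\mathbb{C}} \times_{\det \circ \mathrm{Ad}} \mathbb{C}.$$

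Next I would justify the identity $\det \circ \mathrm{Ad} = \chi_{\delta_{P}}^{-1}$, which carries the Lie-theoretic content. Since a holomorphic character of $P$ is trivial on the unipotent radical and on the semisimple part of the Levi factor, it is completely determined by its differential restricted to $\mathfrak{h}$, equivalently by its restriction to $T^{\mathbb{C}}$. For $H \in \mathfrak{h}$, the operator $\mathrm{ad}(H)$ acts on each root space $\mathfrak{g}_{-\alpha} \subset \mathfrak{m}$ by the scalar $-\alpha(H)$, so
$$\mathrm{tr}\big(\mathrm{ad}(H)|_{\mathfrak{m}}\big) = \sum_{\alpha \in \Pi^{+}\setminus\langle\Theta\rangle^{+}} (-\alpha)(H) = -\delta_{P}(H).$$
Because the differential of $\det \circ \mathrm{Ad}$ sends $H$ to $\mathrm{tr}(\mathrm{ad}(H)|_{\mathfrak{m}})$, we obtain $\big(d(\det\circ\mathrm{Ad})\big)_{e} = -\delta_{P}$, and hence $\det \circ \mathrm{Ad} = \chi_{-\delta_{P}} = \chi_{\delta_{P}}^{-1}$ as characters of $P$.

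Finally I would match this with the definition of the line bundle $L_{\chi_{\delta_{P}}}$. By construction $L_{\chi_{\delta_{P}}} = G^{\mathbb{C}} \times_{\chi_{\delta_{P}}} \mathbb{C}_{-\delta_{P}}$, where Remark \ref{remarkcocycle} specifies that $P$ acts on the fibre by $p\cdot z = \chi_{\delta_{P}}(p)^{-1} z$. This is precisely the $P$-action $\det \circ \mathrm{Ad} = \chi_{\delta_{P}}^{-1}$ appearing in the associated bundle above, so the two associated holomorphic line bundles are isomorphic, giving $-K_{X_{P}} = L_{\chi_{\delta_{P}}}$.

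The only genuinely delicate point, and hence where I expect the main obstacle to lie, is verifying that a holomorphic character of $P$ is pinned down by its differential on $\mathfrak{h}$ (so that the torus-level trace computation determines the character on all of $P$) and, relatedly, that $\delta_{P}$ integrates to a bona fide character $\chi_{\delta_{P}}\colon P \to \mathbb{C}^{\times}$ rather than merely a weight of $\mathfrak{h}$. This requires knowing that $\delta_{P}$ lies in the lattice of $P$-integral weights, which holds because the simple reflection $s_{\beta}$ permutes $\Pi^{+}\setminus\langle\Theta\rangle^{+}$ for every $\beta \in \Theta$, so $s_{\beta}(\delta_{P}) = \delta_{P}$ and therefore $\langle\delta_{P},h_{\beta}^{\vee}\rangle = 0$ for $\beta \in \Theta$, while $\langle\delta_{P},h_{\alpha}^{\vee}\rangle$ is a nonnegative integer for $\alpha \in \Sigma \setminus \Theta$. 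Everything else is the routine weight bookkeeping already summarized by the phrase \emph{a straightforward computation}.
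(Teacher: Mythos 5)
Your argument is correct and follows exactly the route the paper intends: identify $\det\big(G^{\mathbb{C}}\times_{P}\mathfrak{m}\big)$ with the line bundle associated to the character $\det\circ\mathrm{Ad}$, verify $\det\circ\mathrm{Ad}=\chi_{\delta_{P}}^{-1}$ by the trace computation on $\mathfrak{m}=\sum_{\alpha\in\Pi^{+}\setminus\langle\Theta\rangle^{+}}\mathfrak{g}_{-\alpha}$, and match signs with the convention of Remark \ref{remarkcocycle}. You merely make explicit the ``straightforward computation'' the paper leaves to the reader, including the useful observation that $\langle\delta_{P},h_{\beta}^{\vee}\rangle=0$ for $\beta\in\Theta$ so that $\chi_{\delta_{P}}$ genuinely extends to $P$.
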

The above result allows us to write 

$$- K_{X_{P}} = \big \{(U_{i})_{i \in I},\chi_{\delta_{P}}^{-1} \circ \psi_{i j} \colon U_{i} \cap U_{j} \to \mathbb{C}^{\times} \big \},$$
see Remark \ref{remarkcocycle}. Moreover, since the holomorphic character associated to $\delta_{P}$ can be written as
$$\chi_{\delta_{P}} = \displaystyle \prod_{\alpha \in \Sigma \backslash \Theta} \chi_{\omega_{\alpha}}^{\langle \delta_{P},h_{\alpha}^{\vee} \rangle},$$
we have the following characterization

$$-K_{X_{P}} =  L_{\chi_{\delta_{P}}} = \displaystyle \bigotimes_{\alpha \in \Sigma \backslash \Theta} L_{\chi_{\omega_{\alpha}}}^{\otimes \langle \delta_{P},h_{\alpha}^{\vee} \rangle}.$$

From Proposition \ref{C8S8.2Sub8.2.3P8.2.6} and Proposition \ref{C8S8.2Sub8.2.3Eq8.2.35} we have the following corollary:
\begin{corollary}
\label{COROLLARY}
The Chern class of $-K_{X_{P}}$ is given by
$$c_{1}(-K_{X_{P}}) = \displaystyle \sum_{\alpha \in \Sigma \backslash \Theta} \big \langle \delta_{P},h_{\alpha}^{\vee} \big \rangle c_1(L_{\chi_{\omega_{\alpha}}}).$$
\end{corollary}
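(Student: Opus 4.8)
The plan is to deduce the corollary directly from Proposition \ref{C8S8.2Sub8.2.3Eq8.2.35} together with the additivity of the first Chern class under tensor products of line bundles. The starting point is the identification $-K_{X_{P}} = L_{\chi_{\delta_{P}}}$ established there, which by the factorization of the character $\chi_{\delta_{P}}$ displayed immediately after that proposition yields
$$-K_{X_{P}} = \bigotimes_{\alpha \in \Sigma \backslash \Theta} L_{\chi_{\omega_{\alpha}}}^{\otimes \langle \delta_{P},h_{\alpha}^{\vee} \rangle}.$$
Once this tensor decomposition is in hand, the corollary is a one-line consequence of the fact that $c_{1} \colon \Pic(X_{P}) \to H^{2}(X_{P},\mathbb{Z})$ is a group homomorphism sending $\otimes$ to $+$; applying it to the right-hand side distributes the first Chern class over the tensor factors and pulls out the integer exponents $\langle \delta_{P},h_{\alpha}^{\vee} \rangle$.

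The one computational input I would make explicit is the factorization of $\chi_{\delta_{P}}$. Since the fundamental weights $\{\omega_{\alpha} \mid \alpha \in \Sigma\}$ form the dual basis to the simple coroots $\{h_{\beta}^{\vee}\}$, with $\omega_{\alpha}(h_{\beta}^{\vee}) = \delta_{\alpha\beta}$, any weight $\mu$ expands as $\mu = \sum_{\alpha \in \Sigma} \langle \mu, h_{\alpha}^{\vee} \rangle \omega_{\alpha}$; applying this to $\mu = \delta_{P}$ and passing from the linear relation on weights to the corresponding multiplicative relation on the associated holomorphic characters gives $\chi_{\delta_{P}} = \prod_{\alpha \in \Sigma} \chi_{\omega_{\alpha}}^{\langle \delta_{P},h_{\alpha}^{\vee} \rangle}$. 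The reason the product collapses to the sub-index set $\Sigma \backslash \Theta$ is that the coefficients $\langle \delta_{P},h_{\alpha}^{\vee} \rangle$ vanish for $\alpha \in \Theta$ --- equivalently, this vanishing is precisely what allows $\chi_{\delta_{P}}$ to extend from $T^{\mathbb{C}}$ to the parabolic $P$ and hence to define the line bundle $L_{\chi_{\delta_{P}}}$ on $X_{P}$ in the first place.

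There is no genuine obstacle here: the statement is a formal corollary, and the only point requiring a modicum of care is the bookkeeping, namely that $c_{1}$ really is additive. This one may either quote as standard or read off from Proposition \ref{C8S8.2Sub8.2.3P8.2.6}, where the identification $\Pic(X_{P}) = \bigoplus_{\alpha \in \Sigma \backslash \Theta}\mathbb{Z}[\eta_{\alpha}]$ presents the Picard group as a free abelian group under $\otimes$, matched with $H^{2}(X_{P},\mathbb{Z})$ under $c_{1}$. Combining this with Proposition \ref{C8S8.2Sub8.2.3P8.2.7}, which gives $c_{1}(L_{\chi_{\omega_{\alpha}}}) = [\eta_{\alpha}]$, one even recovers the fully explicit form $c_{1}(-K_{X_{P}}) = \sum_{\alpha \in \Sigma \backslash \Theta} \langle \delta_{P},h_{\alpha}^{\vee} \rangle [\eta_{\alpha}]$, of which the stated corollary is a direct reformulation.
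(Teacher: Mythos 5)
Your proposal is correct and follows essentially the same route as the paper, which derives the corollary directly from the identification $-K_{X_{P}} = L_{\chi_{\delta_{P}}} = \bigotimes_{\alpha \in \Sigma \backslash \Theta} L_{\chi_{\omega_{\alpha}}}^{\otimes \langle \delta_{P},h_{\alpha}^{\vee} \rangle}$ together with Propositions \ref{C8S8.2Sub8.2.3P8.2.7} and \ref{C8S8.2Sub8.2.3P8.2.6} and the additivity of $c_{1}$. Your added justification that $\langle \delta_{P},h_{\alpha}^{\vee}\rangle = 0$ for $\alpha \in \Theta$ (which the paper leaves implicit) is accurate and correctly explains why the product is indexed by $\Sigma \backslash \Theta$.
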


%
%
%Now in order to calculate $c_{1}(-K_{X_{P}})$ we observe that
%
%\begin{center}
%
%$\chi_{\delta_{P}} = \displaystyle \prod_{\alpha \in \Sigma \backslash \Theta} \chi_{\omega_{\alpha}}^{\langle \delta_{P},h_{\alpha}^{\vee} \rangle}$,
%
%\end{center}
%thus we have
%
%\begin{center}
%
%$-K_{X_{P}} =  L_{\chi_{\delta_{P}}} = \displaystyle \bigotimes_{\alpha \in \Sigma \backslash \Theta} L_{\chi_{\omega_{\alpha}}}^{\otimes \langle \delta_{P},h_{\alpha}^{\vee} \rangle}$,
%
%\end{center}
%from the last equality above and proposition \ref{C8S8.2Sub8.2.3P8.2.7} we have the expression 
%\begin{center}
%
%$c_{1}(-K_{X_{P}}) = \displaystyle \sum_{\alpha \in \Sigma \backslash \Theta} \big \langle \delta_{P},h_{\alpha}^{\vee} \big \rangle c_{1}(L_{\chi_{\omega_{\alpha}}})$,
%
%\end{center}
%therefore we obtain the result stated. 

\begin{remark}
In order to do some local computations it will be convenient for us to consider the open set defined by the ``opposite" big cell in $X_{P}$. This open set is a distinguished coordinate neighbourhood $U \subset X_{P}$ of $x_{0} = eP \in X_{P}$ defined by the maximal Schubert cell. A brief description for the opposite big cell can be done as follows. Let $\Pi = \Pi^{+} \cup \Pi^{+}$ be the root system associated to the simple root system $\Sigma \subset \mathfrak{h}^{\ast}$, from this we can define the opposite big cell $U \subset X_{P}$ by

\begin{center}

 $U =  B^{-}x_{0} = R_{u}(P_{\Theta})^{-}x_{0} \subset X_{P}$,  

\end{center}
 where $B^{-} = \exp(\mathfrak{h} \oplus \mathfrak{n}^{-})$ and
 
 \begin{center}
 
 $R_{u}(P_{\Theta})^{-} = \displaystyle \prod_{\alpha \in \Pi^{-} \backslash \langle \Theta \rangle^{-}}N_{\alpha}^{-}$, \ \ (opposite unipotent radical)
 
 \end{center}
with $N_{\alpha}^{-} = \exp(\mathfrak{g}_{\alpha})$, $\forall \alpha \in \Pi^{-} \backslash \langle \Theta \rangle^{-}$. The opposite big cell defines a contractible open dense subset of $X_{P}$. For further results about Schubert cells and Schubert varieties we suggest \cite{MONOMIAL}.

\end{remark}

\subsection{Calabi-Yau metrics on  $K_{X_{P}}$} \label{sec-CY}

It is well known that every generalized flag manifold admits a $G$-invariant K\"{a}hler form.  The K\"{a}hler potential was described by  Azad and Biswas, and is an important element in our approach. The next result is a consequence of Theorem \ref{AZADBISWAS} and corollary \ref{COROLLARY}.

\begin{proposition}
\label{Prop2.8}
 Let $X_{P}$ be a complex flag manifold associated to some parabolic subgroup $ P = P_{\Theta} \subset G^{\mathbb{C}}$, for some $\Theta \subset \Sigma$. Then there exists a $G$-invariant K\"{a}hler form $\omega_{X_{P}} \in c_{1}(X_{P})$ completely determined by the (quasi-potential) $\varphi \colon G^{\mathbb{C}} \to \mathbb{R}$ defined by
$$
\varphi(g) = \displaystyle  \frac{1}{2\pi} \log \Big (\prod_{\alpha \in \Sigma \backslash \Theta}||gv_{\omega_{\alpha}}^{+}||^{2\langle \delta_{P},h_{\alpha}^{\vee} \rangle} \Big),
$$
where $v_{\omega_{\alpha}}^{+} $ is the highest weight vector associated to the fundamental weight $\omega_\alpha$, for each $\alpha \in \Sigma \backslash \Theta$. 
\end{proposition}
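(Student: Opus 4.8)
The plan is to recognize the quasi-potential $\varphi$ in the statement as a particular instance of the Azad--Biswas potential of Theorem \ref{AZADBISWAS}, with the specific coefficients dictated by Corollary \ref{COROLLARY}, and then to check that these coefficients are strictly positive so that the resulting invariant $(1,1)$-form is genuinely K\"ahler. Concretely, I would first rewrite
\[
\varphi(g) = \frac{1}{2\pi}\log\Big(\prod_{\alpha\in\Sigma\setminus\Theta}\|gv_{\omega_\alpha}^{+}\|^{2\langle\delta_{P},h_{\alpha}^{\vee}\rangle}\Big) = \sum_{\alpha\in\Sigma\setminus\Theta}\langle\delta_{P},h_{\alpha}^{\vee}\rangle\,\varphi_{\omega_\alpha}(g),
\]
where $\varphi_{\omega_\alpha}(g)=\tfrac{1}{2\pi}\log\|gv_{\omega_\alpha}^{+}\|^{2}$ is the potential attached to the fundamental weight $\omega_\alpha$ introduced just before Proposition \ref{C8S8.2Sub8.2.3P8.2.7}. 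Thus $\varphi$ is, up to the harmless positive factor $\tfrac1\pi$, exactly the Azad--Biswas potential with coefficients $c_\alpha=\langle\delta_{P},h_{\alpha}^{\vee}\rangle$.

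Granting for the moment that $\langle\delta_{P},h_{\alpha}^{\vee}\rangle>0$ for every $\alpha\in\Sigma\setminus\Theta$, the converse part of Theorem \ref{AZADBISWAS} produces a closed $G$-invariant real $(1,1)$-form $\omega_{X_{P}}=\omega_\varphi$ which is K\"ahler and satisfies $\pi^{\ast}\omega_{X_{P}}=\sqrt{-1}\,\partial\overline{\partial}\varphi$. Using the decomposition above together with $\pi^{\ast}\eta_\alpha=\sqrt{-1}\,\partial\overline{\partial}\varphi_{\omega_\alpha}$ and the linearity of $\partial\overline{\partial}$, I get $\pi^{\ast}\omega_{X_{P}}=\sum_{\alpha}\langle\delta_{P},h_{\alpha}^{\vee}\rangle\,\pi^{\ast}\eta_\alpha$; since $\pi^{\ast}$ is injective on forms this descends to $\omega_{X_{P}}=\sum_{\alpha}\langle\delta_{P},h_{\alpha}^{\vee}\rangle\,\eta_\alpha$ on $X_{P}$. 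Passing to cohomology and invoking Proposition \ref{C8S8.2Sub8.2.3P8.2.7} and Corollary \ref{COROLLARY},
\[
[\omega_{X_{P}}]=\sum_{\alpha\in\Sigma\setminus\Theta}\langle\delta_{P},h_{\alpha}^{\vee}\rangle\,c_{1}(L_{\chi_{\omega_\alpha}})=c_{1}(-K_{X_{P}})=c_{1}(X_{P}),
\]
the last equality because $-K_{X_{P}}=\det(T^{(1,0)}X_{P})$ and $c_{1}(X_{P})=c_{1}(T^{(1,0)}X_{P})=c_{1}(\det T^{(1,0)}X_{P})$. This yields all the assertions, provided the positivity of the coefficients is established.

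The step I expect to be the real content --- everything else being bookkeeping with the two cited results --- is the positivity $\langle\delta_{P},h_{\alpha}^{\vee}\rangle>0$ for $\alpha\in\Sigma\setminus\Theta$, which is precisely the Fano property of $X_{P}$ read off from $\delta_{P}$. The plan here is to write $\delta_{P}=2(\rho-\rho_\Theta)$, where $\rho$ is the half-sum of all positive roots and $\rho_\Theta$ the half-sum of the positive roots of the subsystem $\langle\Theta\rangle$; this is immediate from $\delta_{P}=\sum_{\beta\in\Pi^{+}\setminus\langle\Theta\rangle^{+}}\beta$ together with $2\rho=\sum_{\beta\in\Pi^{+}}\beta$ and $2\rho_\Theta=\sum_{\beta\in\langle\Theta\rangle^{+}}\beta$. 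For $\alpha\in\Sigma\setminus\Theta$ one has $\langle\rho,h_{\alpha}^{\vee}\rangle=1$, while $\rho_\Theta$ is a nonnegative rational combination $\sum_{\gamma\in\Theta}a_\gamma\gamma$ of the simple roots in $\Theta$, so that
\[
\langle\rho_\Theta,h_{\alpha}^{\vee}\rangle=\sum_{\gamma\in\Theta}a_\gamma\langle\gamma,h_{\alpha}^{\vee}\rangle\le 0,
\]
since the Cartan integers $\langle\gamma,h_{\alpha}^{\vee}\rangle$ of distinct simple roots are nonpositive. Hence $\langle\delta_{P},h_{\alpha}^{\vee}\rangle=2-2\langle\rho_\Theta,h_{\alpha}^{\vee}\rangle\ge 2>0$, which is exactly the hypothesis needed for the K\"ahler criterion in Theorem \ref{AZADBISWAS} and, simultaneously, confirms that the integers $\langle\delta_{P},h_{\alpha}^{\vee}\rangle$ entering the tensor decomposition of $-K_{X_{P}}$ are strictly positive. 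With this positivity in hand the two preceding paragraphs combine to give the proposition.
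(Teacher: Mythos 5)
Your proposal is correct and follows exactly the route the paper intends: the paper states this proposition without a written proof, presenting it as an immediate consequence of Theorem \ref{AZADBISWAS} and Corollary \ref{COROLLARY}, which is precisely the decomposition $\varphi=\sum_{\alpha}\langle\delta_{P},h_{\alpha}^{\vee}\rangle\varphi_{\omega_{\alpha}}$ and cohomology-class identification you carry out. The only addition is your explicit verification that $\langle\delta_{P},h_{\alpha}^{\vee}\rangle\geq 2>0$ via $\delta_{P}=2(\rho-\rho_{\Theta})$; this is a standard fact the paper leaves implicit, and your argument for it is correct.
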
  

\begin{remark}
By ``completely determined" we mean that for every locally defined smooth section $s_{U} \colon U \subset X_{P} \to G^{\mathbb{C}}$, we have 
$$
\omega_{X} |_{U} = \sqrt{-1}\partial \overline{\partial}(s_{U}^{\ast}(\varphi)),
$$
see \cite{AZAD} for details.
\end{remark}

By means of the above local description we can take a Hermitian structure $H$ on $K_{X_{P}}$ defined as follows. Consider an open cover $X_{P} = \bigcup_{i \in I} U_{i}$ which trivializes both $P \hookrightarrow G^{\mathbb{C}} \to X_{P}$ and $K_{X_{P}} \to X_{P}$. Now, take a collection of local sections $(s_{i})_{i \in I}$, such that $s_{i} \colon U_{i} \to G^{\mathbb{C}}$. From these we define $q_{i} \colon U_{i} \to \mathbb{R}_{+}$ by
\begin{equation}
\label{functionshermitian}
q_{i} =  {\mathrm{e}}^{2\pi \varphi \circ s_{i}} = \prod_{\alpha \in \Sigma \backslash \Theta}||s_{i}v_{\omega_{\alpha}}^{+}||^{2\langle \delta_{P},h_{\alpha}^{\vee} \rangle},
\end{equation}
for every $i \in I$. These functions $(q_{i})_{i \in I}$ satisfy $q_{j} = |\chi_{\delta_{P}} \circ \psi_{ij}|^{2}q_{i}$ on $U_{i} \cap U_{j} \neq \emptyset$, here we have used that $s_{j} = s_{i}\psi_{ij}$ on $U_{i} \cap U_{j} \neq \emptyset$, and $pv_{\omega_{\alpha}}^{+} = \chi_{\omega_{\alpha}}(p)v_{\omega_{\alpha}}^{+}$, for every $p \in P$ and $\alpha \in \Sigma \backslash \Theta$. From the above collection of smooth functions we can define a Hermitian structure $H$ on $K_{X_{P}}$ by taking on each trivialization $\phi_{i} \colon K_{X_{P}} \to U_{i} \times \mathbb{C}$ a metric defined by
\begin{equation}
\label{hermitian}
H((x,v),(x,w)) = q_{i}(x) v\overline{w},
\end{equation}
%it is worth to observe that the above Hermitian structure is induced from $-K_{X_{P}}$, see equation \ref{C8S8.2Sub8.2.3Eq8.2.37}, page 258. 
for $(x,v),(x,w) \in K_{X_{P}}|_{U_{i}} \cong U_{i} \times \mathbb{C}$. The above Hermitian structure induces a Chern connection $\nabla = d + \partial \log H$ with curvature $F_{\nabla}$ satisfying 
$$
F_{\nabla} = 2\pi \sqrt{-1}\omega_{X_{P}}.
$$
Notice that $c_{1}(K_{X_{P}}) = -[\omega_{X_{P}}] = -c_{1}(X_{P})$, thus from Theorem \ref{AZADBISWAS} we have that $(X_{P},\omega_{X_{P}})$ is a K\"{a}hler-Einstein Fano manifold with ${\text{Ric}}(\omega_{X_{P}}) = 2\pi \omega_{X_{P}}$, e.g. \cite[Appendix D.1]{EDERTHESIS}.

The following result due to E. Calabi ensures the existence of Ricci-flat K\"{a}hler metrics on the canonical line bundle of generalized complex flag manifolds. For details we suggest \cite{CALABIANSATZ} (see also \cite[p. 108, Theorem 8.1]{SALAMONANSATZ}).

\begin{theorem}[E. Calabi]
\label{CALABI3}
 Let $(X,\omega_{X})$ be a compact K\"{a}hler-Einstein manifold such that $c_{1}(X) > 0$, i.e. a K\"{a}hler-Einstein Fano manifold. Then, there exists a complete Ricci-flat metric on the manifold defined by the total space $K_{X} = \det(T^{\ast}X)$. 
\end{theorem}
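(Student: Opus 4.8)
The plan is to establish the result via the \emph{Calabi ansatz}, reducing the Ricci-flat (Monge--Amp\`{e}re) equation on the total space $\Tot(K_X)$ to an explicitly solvable ordinary differential equation in a single radial variable. First I would normalize the K\"{a}hler--Einstein condition as $\mathrm{Ric}(\omega_X) = \lambda\,\omega_X$ with $\lambda>0$ (possible since $c_1(X)>0$), and observe that $\omega_X$ induces a Hermitian metric $h$ on $T^{(1,0)}X$, hence on $K_X = \det(T^\ast X)$, whose Chern curvature equals $-\mathrm{Ric}(\omega_X) = -\lambda\,\omega_X$. Writing $t = \|\xi\|_h^2 \colon \Tot(K_X)\to\mathbb{R}_{\geq 0}$ for the fiberwise squared norm and $s = \log t$ on the complement of the zero section, a local computation in a holomorphic frame (where $t = |w|^2 e^{-\phi}$ and $\log|w|^2$ is pluriharmonic) gives
\[
\sqrt{-1}\,\partial\overline{\partial}\,s = \lambda\,\pi^\ast\omega_X
\]
off the zero section.

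Next I would impose the rotationally invariant ansatz $\omega = \sqrt{-1}\,\partial\overline{\partial}\,G(s)$ for an unknown function $G$, giving
\[
\omega = \lambda\,G'(s)\,\pi^\ast\omega_X + G''(s)\,\sqrt{-1}\,\partial s\wedge\overline{\partial}s,
\]
which is a K\"{a}hler form precisely when $G'>0$ and $G''>0$. The essential structural input is that $\Tot(K_X)$ carries a canonical nowhere-vanishing holomorphic $(n+1,0)$-form $\Omega$ (the tautological form, locally $dw\wedge\pi^\ast(dz_1\wedge\cdots\wedge dz_n)$), so that $\omega$ is Ricci-flat if and only if $\omega^{\,n+1} = \text{const}\cdot\Omega\wedge\overline{\Omega}$. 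Expanding $\omega^{n+1}$ in terms of $\pi^\ast\omega_X^{\,n}\wedge\sqrt{-1}\,\partial s\wedge\overline{\partial}s$ and comparing with $\Omega\wedge\overline{\Omega}$ reduces this to the single ODE $(G'(s))^n\,G''(s) = c\,e^{s}$ for a positive constant $c$. \emph{This reduction is the heart of the argument}: it is exactly the K\"{a}hler--Einstein identity $\mathrm{Ric}(\omega_X)=\lambda\omega_X$ that identifies the local potential $\phi$ of $h$ with the Ricci potential of $\omega_X$, forcing the base-dependence to cancel and leaving a clean separable equation. I expect this step to be the main obstacle to write carefully.

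Finally I would solve and verify. Integrating $(G')^n G'' = ce^s$ yields $(G'(s))^{n+1} = c'e^{s}+C$, i.e. $G'(s) = (c'e^{s}+C)^{1/(n+1)}$ with $C>0$ a free integration constant; substituting $t=e^{s}=|\xi|^2$ produces coefficients of the form $(c'u+C)^{1/(n+1)}$ and $\tfrac{1}{n+1}(c'u+C)^{-n/(n+1)}$, which in the flag-manifold normalization $\mathrm{Ric}(\omega_X)=2\pi\omega_X$ are exactly those appearing in the main theorem. Positivity $G',G''>0$ holds for every $C>0$. For smoothness across the zero section one notes that $G''(s)\sim \text{const}\cdot t$ vanishes linearly as $t\to 0$, which exactly cancels the $|w|^{-2}$ singularity of $\sqrt{-1}\,\partial s\wedge\overline{\partial}s$, so $\omega$ extends to a smooth positive-definite form there (this is where $C>0$ is needed). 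For completeness, the length of a radial ray to infinity in a fiber behaves like $\int^{\infty}\sqrt{G''}\,d\rho \sim \int^{\infty}\rho^{-n/(n+1)}\,d\rho$, which diverges because $n/(n+1)<1$; since the zero section is compact, this yields geodesic completeness. Thus $(\Tot(K_X),\omega)$ is the desired complete Ricci-flat K\"{a}hler manifold. Beyond the ODE reduction, the completeness estimate (divergence of the radial integral) and the smooth extension across the zero section are the remaining points requiring care, though both become routine once the explicit solution is in hand.
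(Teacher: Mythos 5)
Your proposal is correct and is essentially the paper's own argument (Remark \ref{REMARKIMPORTANT}): the Calabi ansatz reducing the Monge--Amp\`ere equation $\omega^{n+1}=c\,\Omega\wedge\overline{\Omega}$ to the ODE $(G')^nG''=c\,e^{s}$, with the same solution $f(u)=(\lambda u+C)^{1/(n+1)}$ and the same completeness estimate via divergence of the vertical radial integral. The only cosmetic difference is that you write the ansatz through a global potential $G(\log\|\xi\|_h^2)$ off the zero section and then check smooth extension, whereas the paper writes the form directly as $f(u)\pi^{\ast}\omega_X-\tfrac{\sqrt{-1}}{\lambda}f'(u)\nabla\xi\wedge\overline{\nabla\xi}$, which is manifestly globally defined.
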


\begin{remark}
\label{REMARKIMPORTANT}
Let us give an outline of the proof of Theorem \ref{CALABI3}, the details of the ideas which we describe below can be found in \cite[Appendix D.2]{EDERTHESIS}.

Under the hypotheses of Theorem \ref{CALABI3}, consider $\omega_{X}$ as being a K\"{a}hler-Einstein metric on $X$ which satisfies  ${\text{Ric}}(\omega_{X}) = t\omega_{X}$, $t>0$, and take a Chern connection $(A_{U})_{U \in \mathscr{U}}$ on $K_{X}$ such that $\sqrt{-1}dA_{U} = -t\omega_{X}$. From this, we can define $\omega_{0} \in \Omega^{(1,1)}(K_{X})$ by

\begin{equation}
\label{Ansatzform}
\omega_{0} = f(u)\pi^{\ast}\omega_{X} - \displaystyle \frac{\sqrt{-1}}{t}f'(u)\nabla \xi \wedge \overline{\nabla \xi}, 
\end{equation}
where $f \colon \mathbb{R} \to \mathbb{R}$ is a smooth function, $u([x,\xi]) = |\xi|^{2}$, $\forall [x,\xi] \in K_{X}$, and $\nabla \xi \wedge \overline{\nabla \xi}$ is a $(1,1)$-form obtained by gluing $\nabla \xi|_{U} \wedge \overline{\nabla \xi|_{U}}$, where

\begin{center}

$\nabla \xi|_{U} = d\xi_{U} + \xi_{U} A_{U},$

\end{center}
is defined in local coordinates $(z_{U},\xi_{U}) \in K_{X}|_{U}$, $U \in \mathscr{U}$.

Now, let us briefly describe the main steps in the proof of Theorem \ref{CALABI3}:

\begin{enumerate}

\item If $f > 0$ and $f' > 0 \implies \omega_{0}$ is a positive $(1,1)$-form. Furthermore, It is straightforward to check that (locally) $\omega_{0} = d\eta$, where

\begin{center}

$\eta = \displaystyle -\frac{\sqrt{-1}}{t} \Big (f A_{U} - \frac{f d\overline{\xi}_{U}}{\overline{\xi}_{U}} \Big ),$

\end{center}
thus we have that $\omega_{0}$ defines a K\"{a}hler form;

\item Consider $\theta \in \Omega^{(n,0)}(K_{X})$, such that

\begin{center}
$\theta_{\nu}(X_{1},\ldots,X_{n}) = (\pi^{\ast}\nu)(X_{1},\ldots,X_{n})$,
\end{center}
$\forall \nu \in K_{X}$, $X_{1},\ldots,X_{n} \in T_{\nu}^{(1,0)}K_{X}$. By setting $\Omega = d\theta \in \Omega^{(n+1,0)}(K_{X})$, we obtain
\begin{center}

$d\Omega = 0 \implies \overline{\partial}\Omega = 0,$

\end{center}
notice that locally we have $\theta = \xi_{U}\sigma_{U}$ and $\Omega = d\xi_{U} \wedge \sigma_{U}$, where $\sigma_{U}$ is a nonvanishing holomorphic section of $K_{X}|_{U}$. Thus, it follows that  $\Omega = d\theta$ defines a holomorphic volume form;

\item By considering the norm $||\cdot||_{\omega_{0}}$ on $\bigwedge^{(n+1,0)}(K_{X})$ induced by $\omega_{0}$, we have 

\begin{center}
$(n+1)!\Omega \wedge \overline{\Omega} = \iota^{(n+1)^{2}}||\Omega||_{\omega_{0}}^{2}\omega_{0}^{(n+1)},$
\end{center}
from the equation above we obtain the following condition
\begin{center}
$||\Omega||_{\omega_{0}} = {\text{const.}} \iff f(u)^{n}f'(u) = {\text{const.}}$;
\end{center}

\item Hence, if $f(u) = (tu + C)^{\frac{1}{n+1}} $, $C>0$ $\implies ||\Omega||_{\omega_{0}}^{2} = (n+1)^{2}$;

\item Moreover, $||\Omega||_{\omega_{0}} =  (n+1)^{2} \implies \nabla^{\omega_{0}}\Omega = 0 \implies {\text{Hol}}^{0}(\nabla^{\omega_{0}}) \subset {\rm{SU}}(n+1)$.

\end{enumerate}
From the above facts we have that

\begin{center}

$\omega_{0} = \displaystyle (tu+C)^{\frac{1}{n+1}} \pi^{\ast} \omega_{X} - \frac{\sqrt{-1}}{n+1}(tu+C)^{-\frac{n}{n+1}}\nabla \xi \wedge \overline{\nabla \xi},$

\end{center}
defines a Ricci-flat K\"{a}hler metric on $K_{X}$.

Now, consider the Riemannian metric $g_{0} = \omega_{0}({\text{id}} \otimes J)$ on $K_{X}$, we have

\begin{center}
\label{C8S8.3Sub8.3.1Eq8.3.14}
$g_{0} = \displaystyle (tu + C)^{\frac{1}{n+1}} \pi^{\ast}g_{X} + \frac{1}{n+1}(tu + C)^{-\frac{n}{n+1}}{\text{Re}} \big (\nabla \xi \otimes \overline{\nabla \xi} \big ).$
\end{center}
If we take a divergent $C^{1}$-path $\gamma \colon [0,a) \to (K_{X},g_{0})$, we have essentially two possibilities:

\begin{enumerate}

\item $\pi \circ \gamma \colon [0,a) \to (X,g_{X})$ is a divergent $C^{1}$-path (horizontal divergence);

\item $\pi \circ \gamma \colon [0,a) \to (X,g_{X})$ is not a divergent $C^{1}$-path (vertical divergence).

\end{enumerate}
In the first case above the completeness of $(X,g_{X})$ shows us that $\gamma \colon [0,a) \to (K_{X},g_{0})$ has infinite length. In the second case the divergence of the integral

\begin{center}

$\displaystyle \sqrt{\frac{1}{n+1}} \int_{0}^{+ \infty} \frac{1}{(tu + C)^{\frac{n}{2(n+1)}}} d(u^{\frac{1}{2}})$, 

\end{center}
implies that the curve $\gamma \colon [0,a) \to (K_{X},g_{0})$ has infinite length on the vertical radial direction, i.e., if $\gamma$ is a divergent curve on the vertical radial direction it has infinite length. Therefore, from the characterization of complete Riemannian manifolds provided by \cite[p. 179, Proposition 1]{DIVPATH}, it follows that $g_{0}$ is a complete Riemannian metric.

\end{remark}

Now we are able to prove our main result. %From the above facts we have the following main result of this chapter

\begin{theorem}
\label{C8S8.3Sub8.3.2Teo8.3.5}
Let $(X_{P},\omega_{X_{P}})$ be a complex flag manifold associated to some parabolic subgroup $P = P_{\Theta} \subset G^{\mathbb{C}}$, such that $\dim_{\mathbb{C}}(X_{P}) = n$. Then, the total space $K_{X_{P}}$ admits a complete Ricci-flat K\"{a}hler metric with K\"{a}hler form given by

\begin{equation}
\label{C8S8.3Sub8.3.2Eq8.3.15}
\omega_{CY} = (2\pi u + C)^{\frac{1}{n+1}} \pi^{\ast}\omega_{X_{P}} - \frac{\sqrt{-1}}{n+1}(2\pi u + C)^{-\frac{n}{n+1}}\nabla \xi \wedge \overline{\nabla \xi},
\end{equation}\\
where $C>0$ is some positive constant and $u \colon K_{X_{P}} \to \mathbb{R}_{\geq 0}$ is given by $u([g,\xi]) = |\xi|^{2}$, $\forall [g,\xi] \in K_{X_{P}}$. Furthermore, the above K\"{a}hler form is completely determined by the quasi-potential $\varphi \colon G^{\mathbb{C}} \to \mathbb{R}$ defined by
$$\varphi(g) = \displaystyle  \frac{1}{2\pi} \log \Big (\prod_{\alpha \in \Sigma \backslash \Theta}||gv_{\omega_{\alpha}}^{+}||^{2\langle \delta_{P},h_{\alpha}^{\vee} \rangle} \Big),$$
for every $g \in G^{\mathbb{C}}$. Therefore, $(K_{X_{P}},\omega_{CY})$ is a (complete) noncompact Calabi-Yau manifold with Calabi-Yau metric $\omega_{CY}$ completely determined by $\Theta \subset \Sigma$.
\end{theorem}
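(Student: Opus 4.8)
The plan is to recognize the statement as the specialization of Calabi's general construction (Theorem \ref{CALABI3}) to $X = X_P$, the crucial input being that the quasi-potential $\varphi$ of Proposition \ref{Prop2.8} simultaneously encodes \emph{all} of the geometric data entering the ansatz. First I would verify that $(X_P,\omega_{X_P})$ is a K\"{a}hler--Einstein Fano manifold with Einstein constant $t = 2\pi$. The form $\omega_{X_P}$ furnished by Proposition \ref{Prop2.8} lies in $c_1(X_P)$ and is positive by the Azad--Biswas criterion (Theorem \ref{AZADBISWAS}), the relevant coefficients being proportional to the positive integers $\langle\delta_P,h_\alpha^\vee\rangle$ for $\alpha\in\Sigma\backslash\Theta$; hence $c_1(X_P) > 0$ and $X_P$ is Fano. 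The computation $F_\nabla = 2\pi\sqrt{-1}\,\omega_{X_P}$ for the Chern connection of the Hermitian structure \eqref{hermitian} then yields $\mathrm{Ric}(\omega_{X_P}) = 2\pi\,\omega_{X_P}$, which pins down $t = 2\pi$.

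The conceptual heart of the argument is the observation that the \emph{single} function $\varphi$ controls both factors of the ansatz form \eqref{Ansatzform}. On the base, $\varphi$ determines $\omega_{X_P}$ through $\omega_{X_P}|_U = \sqrt{-1}\,\partial\overline\partial(s_U^\ast\varphi)$. On the fibre, the associated local weights $q_i = \mathrm{e}^{2\pi\varphi\circ s_i}$ of \eqref{functionshermitian} define the Hermitian metric $H$ on $K_{X_P}$, and hence both the radial function $u([g,\xi]) = |\xi|^2$ and the connection form $\nabla\xi = d\xi_U + \xi_U A_U$ appearing in the second term. Thus once $\Theta\subset\Sigma$ is fixed, every ingredient of the formula is explicit, which is exactly what the phrase ``completely determined by $\Theta$'' is meant to record.

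With $t = 2\pi$ in hand, I would invoke the construction reviewed in Remark \ref{REMARKIMPORTANT}: taking the radial profile $f(u) = (2\pi u + C)^{1/(n+1)}$ with $C > 0$ forces the normalization $f^n f' = \mathrm{const.}$, so that the induced norm of the holomorphic volume form $\Omega = d\theta$ is constant, which is equivalent to $\nabla^{\omega_0}\Omega = 0$ and hence to $\mathrm{Hol}^0(\nabla^{\omega_0}) \subset \mathrm{SU}(n+1)$. Substituting this profile into \eqref{Ansatzform} produces precisely the form \eqref{C8S8.3Sub8.3.2Eq8.3.15}, and the parallel holomorphic volume form $\Omega$ upgrades the Ricci-flat K\"{a}hler structure to a genuine Calabi--Yau one. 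Completeness is inherited verbatim from the Remark: a divergent $C^1$-path either projects to a divergent path in the complete base $(X_P,g_{X_P})$ and so has infinite length, or it diverges radially in the fibre, where the divergence of $\int_0^{+\infty}(2\pi u + C)^{-\frac{n}{2(n+1)}}\,d(u^{1/2})$ again forces infinite length, so that the criterion of \cite{DIVPATH} applies.

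The only point requiring genuine care --- and the step I expect to be the main obstacle --- is the identification $t = 2\pi$ together with the compatibility of the two appearances of $\varphi$: one must check that the Hermitian metric built from $q_i = \mathrm{e}^{2\pi\varphi\circ s_i}$ is exactly the one whose Chern curvature reproduces the \emph{same} $\omega_{X_P}$ fixed on the base. Concretely, this amounts to verifying that the transition relation $q_j = |\chi_{\delta_P}\circ\psi_{ij}|^2 q_i$ (which follows from $p\,v_{\omega_\alpha}^+ = \chi_{\omega_\alpha}(p)v_{\omega_\alpha}^+$ and $s_j = s_i\psi_{ij}$) indeed defines a \emph{global} Hermitian metric on $-K_{X_P} = L_{\chi_{\delta_P}}$, as identified in Proposition \ref{C8S8.2Sub8.2.3Eq8.2.35} and Corollary \ref{COROLLARY}. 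Everything else is a direct transcription of Calabi's ansatz, so no new analytic difficulty arises beyond what is already contained in Remark \ref{REMARKIMPORTANT}.
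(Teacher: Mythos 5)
Your proposal is correct and follows essentially the same route as the paper: both reduce existence, Ricci-flatness and completeness to Calabi's ansatz (Theorem \ref{CALABI3} and Remark \ref{REMARKIMPORTANT}) with Einstein constant $t=2\pi$, and then verify on the opposite big cell that the horizontal form $\omega_{X_{P}}|_{U}=\sqrt{-1}\,\partial\overline{\partial}(s_{U}^{\ast}\varphi)$ and the vertical term $\nabla\xi_{U}=d\xi_{U}+2\pi\xi_{U}\partial(s_{U}^{\ast}\varphi)$ are both read off from the single quasi-potential $\varphi$, which is exactly the compatibility check you single out. Two cosmetic points: the collection $(q_{i})$ with $q_{j}=|\chi_{\delta_{P}}\circ\psi_{ij}|^{2}q_{i}$ is the Hermitian metric on $K_{X_{P}}$ itself rather than on $-K_{X_{P}}=L_{\chi_{\delta_{P}}}$, and the paper additionally invokes simple connectedness of $K_{X_{P}}$ to upgrade $\mathrm{Hol}^{0}(\nabla^{\omega_{CY}})\subseteq\mathrm{SU}(n+1)$ to a statement about the full holonomy group.
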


\begin{proof}
The fact that $(K_{X_{P}},\omega_{CY})$ is a complete Ricci-flat K\"{a}hler manifold follows from Theorem \ref{CALABI3}, see also Remark \ref{REMARKIMPORTANT}. Therefore, we just need to verify that ${\text{Hol}}(\nabla^{\omega_{CY}}) \subseteq {\rm{SU}}(n+1)$, where $\nabla^{\omega_{CY}}$ denotes the Levi-Civita connection induced on $\bigwedge^{(n+1,0)}(K_{X_{P}})$ by the K\"{a}hler metric $\omega_{CY}$, and check that $\omega_{CY}$ is completely determined by $\varphi$.

In order to see that ${\text{Hol}}(\nabla^{\omega_{CY}}) \subseteq {\rm{SU}}(n+1)$ we observe that since $K_{X_{P}} \to X_{P}$
is a vector bundle, it follows that $K_{X_{P}}$ has the same homotopy type of $X_{P}$. Once $X_{P}$ is simply connected, we have that $K_{X_{P}}$ is also simply connected. Hence, we have 
$${\text{Hol}}(\nabla^{\omega_{CY}}) = {\text{Hol}}^{0}(\nabla^{\omega_{CY}}) \subseteq {\rm{SU}}(n+1),$$
see \cite[p. 27, Proposition 2.2.6]{JOYCE} for the last argument.

Now, we will check that $\omega_{CY}$ is completely determined by $\varphi$. At first, we notice that on the trivializing open set given by the opposite big cell $U = R_{u}(P_{\Theta})^{-}x_{0} \subset X_{P}$ we have holomorphic coordinates $(nP,\xi_{U})$ on $\pi^{-1}(U) = K_{X_{P}}|_{U}$, where $n \in R_{u}(P_{\Theta})^{-}$ and $\xi_{U} \colon \pi^{-1}(U) \to \mathbb{C}$. From this we can take a local section $s_{U} \colon U \subset X_{P} \to G^{\mathbb{C}}$ defined by $s_{U}(nP) = n \in G^{\mathbb{C}}$, $\forall nP \in U$.

Since on the open set $U \subset X_{P}$ we have $\omega_{X_{P}} |_{U} = \sqrt{-1}\partial \overline{\partial}(s_{U}^{\ast}(\varphi))$, it follows that the $(1,1)$-horizontal component of $\omega_{CY}$ is (locally) defined by
$$
\omega_{X_{P}} |_{U} =  \displaystyle \frac{\sqrt{-1}}{2\pi} \partial \overline{\partial}\log \Big (\prod_{\alpha \in \Sigma \backslash \Theta}||nv_{\omega_{\alpha}}^{+}||^{2\langle \delta_{P},h_{\alpha}^{\vee} \rangle} \Big).
$$

From Remark \ref{REMARKIMPORTANT} we have $\nabla \xi_{U} = d\xi_{U} + \xi_{U} \pi^{\ast}A_{U}$, for some Chern connection locally described by $\nabla |_{U} = d + A_{U}$. We claim that on the $(1,1)$-vertical component of $\omega_{CY}$ we have
$$\nabla \xi_{U} = \displaystyle d\xi_{U} +  \xi_{U}\partial \log \Big (\prod_{\alpha \in \Sigma \backslash \Theta}||nv_{\omega_{\alpha}}^{+}||^{2\langle \delta_{P},h_{\alpha}^{\vee} \rangle} \Big).$$

In fact, if we take the Hermitian structure on $K_{X_{P}}$ defined by the collection of functions $(q_{i})_{i \in I}$ as in \ref{functionshermitian}, namely
$$
H((x,v), (x,w)) = {\mathrm{e}}^{2\pi \varphi(s_{i}(x))}v\overline{w},
$$
see \ref{hermitian}. On coordinates  $(nP,\xi_{U})$ in $\pi^{-1}(U) = K_{X_{P}}|_{U}$ we have $H = {\mathrm{e}}^{2\pi \varphi(n)}\xi_{U} \overline{\xi_{U}}$. Therefore, if we take a local section $\sigma_{U} \colon U \subset X_{P} \to K_{X_{P}}$, defined by $\sigma_{U}(nP) = (s_{U}(nP),1) = (n,1) \in K_{X_{P}}|_{U}$, we have $H(\sigma_{U},\sigma_{U}) =  {\mathrm{e}}^{2\pi \varphi \circ s_{U}}$, thus we obtain
$$
\nabla|_{U} = d + \partial \log H(\sigma_{U},\sigma_{U})  = d + 2 \pi \partial (s_{U}^{\ast}(\varphi)).
$$
It follows that $\nabla \xi_{U} = d\xi_{U} +  2 \pi \xi_{U}\partial (s_{U}^{\ast}(\varphi))$. Hence, the K\"{a}hler form $\omega_{CY}$ is (locally) determined by the forms

\begin{itemize}

\item $\omega_{X_{P}} |_{U} =  \displaystyle \frac{\sqrt{-1}}{2\pi} \partial \overline{\partial}\log \Big (\prod_{\alpha \in \Sigma \backslash \Theta}||nv_{\omega_{\alpha}}^{+}||^{2\langle \delta_{P},h_{\alpha}^{\vee} \rangle} \Big)$, (Horizontal) 

\medskip

\item $\nabla \xi_{U} = \displaystyle d\xi_{U} +  \xi_{U}\partial \log \Big (\prod_{\alpha \in \Sigma \backslash \Theta}||nv_{\omega_{\alpha}}^{+}||^{2\langle \delta_{P},h_{\alpha}^{\vee} \rangle} \Big)$. \ \ (Vertical) 

\end{itemize}
Thus, the Ricci-flat structure defined by $\omega_{CY}$ can be completely described by means of the quasi-potential $\varphi \colon G^{\mathbb{C}} \to \mathbb{R}$, from these we have the desired result.\end{proof}

One of the most important feature of the above Theorem is that it allows us to assign to each subset $\Theta \subset \Sigma$ a complete noncompact Calabi-Yau manifold for which we have the Calabi-Yau metric completely determined by elements of Lie theory.

\begin{remark}
We observe that Calabi-Yau metrics on canonical line bundles of homogeneous spaces also were studied in \cite{HKN} by using different methods.
\end{remark}

\section{Examples of complete noncompact Calabi-Yau manifolds via Lie theory}

\subsection{Warm-up examples: complex projective spaces}

Now we will apply Theorem \ref{C8S8.3Sub8.3.2Teo8.3.5} in a more concrete situation in order to provide a huge class of examples of noncompact Calabi-Yau manifolds obtained via Calabi's technique. Despite Calabi-Yau structures on the canonical bundle of complex projective spaces are well known (for instance, via toric geometry) we provide a different approach via Lie theory. We start by describing the construction on the building block of the general setting.

\begin{remark}
Unless otherwise stated, in the examples which we will describe below we will use the conventions of \cite{SMA} for the realization of classical simple Lie algebras as matrix Lie algebras.
\end{remark}

\begin{example}[Calabi-Yau metrics on $\mathscr{O}(-2)$]
Consider $G^{\mathbb{C}} = {\rm{SL}}(2,\mathbb{C})$, we fix a triangular decomposition for $\mathfrak{sl}(2,\mathbb{C})$ given by

\begin{center}

$\mathfrak{sl}(2,\mathbb{C}) = \Big \langle x = \begin{pmatrix}
 0 & 1 \\
 0 & 0
\end{pmatrix} \Big \rangle_{\mathbb{C}} \oplus  \Big \langle h = \begin{pmatrix}
 1 & 0 \\
 0 & -1
\end{pmatrix} \Big \rangle_{\mathbb{C}} \oplus \Big \langle y = \begin{pmatrix}
 0 & 0 \\
 1 & 0
\end{pmatrix} \Big \rangle_{\mathbb{C}}$.

\end{center}
All the information about the above decomposition is codified in $\Sigma = \{\alpha\}$ and $\Pi = \{\alpha,-\alpha\}$, our set of integral dominant weights in this case is given by

\begin{center}

$\Lambda_{\mathbb{Z}_{\geq 0}}^{\ast} = \mathbb{Z}_{\geq 0}\omega_{\alpha} $.

\end{center} 
We take $P = B$ (Borel subgroup) and from this we obtain
\begin{center}
$X_{B} = {\rm{SL}}(2,\mathbb{C})/B = \mathbb{C}{\rm{P}}^{1}$.
\end{center}
Notice that since $V(\omega_{\alpha}) = \mathbb{C}^{2}$ and $v_{\omega_{\alpha}}^{+} = e_{1}$, it follows that over the opposite big cell $U = N^{-}x_{0} \subset X_{B}$ we have

\begin{center}

$\omega_{\mathbb{C}{\rm{P}}^{1}} = \displaystyle  \frac{\sqrt{-1}}{2\pi} \langle \delta_{B},h_{\alpha}^{\vee} \rangle\partial \overline{\partial} \log \Bigg ( \Big |\Big |\begin{pmatrix}
 1 & 0 \\
 z & 1
\end{pmatrix} e_{1} \Big| \Big|^{2} \Bigg ) = \frac{\sqrt{-1}}{\pi} \partial \overline{\partial} \log (1+|z|^{2}).$

\end{center}
Furthermore, we have $-K_{\mathbb{C}{\rm{P}}^{1}} =  T^{(1,0)}\mathbb{C}{\rm{P}}^{1} = T\mathbb{C}{\rm{P}}^{1}$, and 
$K_{\mathbb{C}{\rm{P}}^{1}} = T^{\ast}\mathbb{C}{\rm{P}}^{1}$. From the previous Theorem we can equip  $K_{\mathbb{C}{\rm{P}}^{1}} = \mathscr{O}(-2)$ with a complete Ricci-Flat metric induced by the K\"{a}hler form
$$\omega_{CY} = (2\pi u + C)^{\frac{1}{2}} \pi^{\ast}\omega_{\mathbb{C}{\rm{P}}^{1}} - \frac{\sqrt{-1}}{2}(2\pi u + C)^{-\frac{1}{2}}\nabla \xi \wedge \overline{\nabla \xi}.$$
If we take the trivializing open set $U =  N^{-}x_{0} \subset \mathbb{C}{\rm{P}}^{1}$ and consider local coordinates $(z,\xi)$ on $\mathscr{O}(-2)|_{U}$, we have the following local expression for $\omega_{CY}$:

\begin{equation}
\label{C8S8.3Sub8.3.2Eq8.3.16}
\omega_{CY} = \displaystyle \frac{\sqrt{2\pi|\xi|^{2}+C}}{\pi(1+|z|^{2})^{2}}\sqrt{-1}dz \wedge d\overline{z} -  \frac{\sqrt{-1}}{2\sqrt{2\pi|\xi|^{2}+C}} \Big (d\xi +\frac{\xi \overline{z}dz}{1+|z|^{2}} \Big) \wedge \Big ( d\overline{\xi} +\frac{\overline{\xi} zd\overline{z}}{1+|z|^{2}} \Big).
\end{equation}\\
Since this example is quite simple, let us briefly verify the Ricci-flatness condition. Consider the tautological form $\theta \in \Omega^{(1,0)}(\mathscr{O}(-2))$. If we take a nonvanishing local holomorphic section section $\sigma \colon U \subset \mathbb{C}{\rm{P}}^{1} \to \mathscr{O}(-2)$, the local expression of $\Omega = d\theta \in \Omega^{(2,0)}(\mathscr{O}(-2))$ is given by $\Omega = d\xi \wedge \sigma$, notice that locally $\theta = \xi \sigma$.

Now, since 
$$2! \Omega \wedge \overline{\Omega} = (\sqrt{-1})^{2^{2}}||\Omega||_{\omega_{CY}}^{2}\omega_{CY}^{2} = ||\Omega||_{\omega_{CY}}^{2}\omega_{CY}^{2},$$
from equation \ref{C8S8.3Sub8.3.2Eq8.3.16} we have $\omega_{CY}^{2} = \frac{-\sqrt{-1}}{2}\omega_{\mathbb{C}{\rm{P}}^{1}} \wedge d\xi \wedge d\overline{\xi}$, thus we obtain
$$2! \Omega \wedge \overline{\Omega} = \frac{-\sqrt{-1}}{2}||\Omega||_{\omega_{CY}}^{2}\omega_{\mathbb{C}{\rm{P}}^{1}} \wedge d\xi \wedge d\overline{\xi}.$$
On the other hand we have 
$$\Omega \wedge \overline{\Omega}  = \big ( d\xi \wedge \sigma \big ) \wedge \big ( d\overline{\xi} \wedge \overline{\sigma} \big) = -\sqrt{-1}||\sigma||_{\omega_{\mathbb{C}{\rm{P}}^{1}}}^{2}\omega_{\mathbb{C}{\rm{P}}^{1}} \wedge d\xi \wedge d\overline{\xi},$$
here we have used $\sigma \wedge \overline{\sigma} = \sqrt{-1}||\sigma||_{\omega_{\mathbb{C}{\rm{P}}^{1}}}^{2}\omega_{\mathbb{C}{\rm{P}}^{1}}$. Without loss of generality we can suppose $\sigma$ unitary instead holomorphic, notice that the previous computation remains the same. Therefore, since $||\sigma||_{\omega_{\mathbb{C}{\rm{P}}^{1}}}^{2} = 1$, we obtain
$$||\Omega||_{\omega_{CY}}^{2} = 4 \implies \nabla^{\omega_{CY}}\Omega = 0.$$
Thus, ${\text{Hol}}(\nabla^{\omega_{CY}}) \subset {\rm{SU}}(2)$, it follows that $(\mathscr{O}(-2),\omega_{CY},\Omega)$ is a noncompact Calabi-Yau manifold. \qed\\

It is worthwhile to point out that besides of rich geometric features of the metric described above, the construction of Ricci-flat metrics on $\mathscr{O}(-2)$ (Eguchi-Hanson metric) is also interesting for its applications in Mathematical Physics, see for instance \cite{EGUCHI}. Moreover, this manifold also defines an example of toric Calabi-Yau surface which has many applications in mirror symmetry, see for example \cite{TORICYAU}. We also point out that since ${\rm{SU}}(2) = {\rm{Sp}}(1)$ the metric described above is also a hyperk\"{a}hler metric.

\end{example}
Our next examples will be an illustration of how to compute Calabi's metric directly from Theorem \ref{C8S8.3Sub8.3.2Teo8.3.5}. 

\begin{example}[Calabi-Yau metrics on $\mathscr{O}(-3)$]\label{ex-cp2}
Consider now the case $G^{\mathbb{C}} = {\rm{SL}}(3,\mathbb{C})$. We fix the Cartan subalgebra $\mathfrak{h} \subset \mathfrak{sl}(3,\mathbb{C})$ given by the diagonal matrices whose the trace is equal zero. In this case the set of simple roots is given by 

$$\Sigma = \Big \{ \alpha_{1} = \epsilon_{1}-\epsilon_{2}, \alpha_{2} = \epsilon_{2} - \epsilon_{3}\Big \},$$
where $\epsilon_{l} \colon {\text{diag}}\{a_{1},\ldots,a_{3} \} \mapsto a_{l}$, $ \forall l = 1, \ldots, 3$, and the corresponding set of positive roots is $\Pi^{+} = \{\alpha_{1},\alpha_{2},\alpha_{1}+\alpha_{2}\}$. By taking $P = P_{\{\alpha_{2}\}}$ we have
$$X_{P} = \mathbb{C}{\rm{P}}^{2},$$
and the quasi-potential $\varphi \colon {\rm{SL}}(3,\mathbb{C}) \to \mathbb{R}$ is
$$\varphi(g) = \displaystyle \frac{\langle \delta_{P}, h_{\alpha_{1}}^{\vee}\rangle}{2\pi} \log ||g v_{\omega_{\alpha_{1}}}^{+}||^{2}.$$
Since $V(\omega_{\alpha_{1}}) = \mathbb{C}^{3}$ and $v_{\omega_{\alpha_{1}}}^{+} = e_{1}$, on the open set defined by the opposite big cell $U = B^{-}x_{0} \subset X_{P}$ we have $\omega_{\mathbb{C}{\rm{P}}^{2}} \in c_{1}(\mathbb{C}{\rm{P}}^{2})$ given by
$$
\omega_{\mathbb{C}{\rm{P}}^{2}} =  \displaystyle \frac{\langle \delta_{P}, h_{\alpha_{1}}^{\vee}\rangle}{2\pi} \sqrt{-1}\partial \overline{\partial } \log \Big( 1 + |z_{1}|^{2} + |z_{2}|^{2}\Big ).$$
Here we have used the parameterization of $U = B^{-}x_{0} \subset X_{P}$ in complex coordinates given by the matrices 
$$n = \begin{pmatrix}
1 & 0 & 0 \\
z_{1} & 1 & 0 \\                  
z_{2}  & 0 & 1
 \end{pmatrix},$$
with $z_{1}, z_{2} \in \mathbb{C}$, notice that the above coordinate system is obtained directly from the exponential map. It is straightforward to verify that $\langle \delta_{P}, h_{\alpha_{1}}^{\vee}\rangle = 3$, thus by applying the result of Theorem \ref{C8S8.3Sub8.3.2Teo8.3.5} we obtain the following (local) expression for the K\"{a}hler form associated to the  Ricci-flat metric on $K_{\mathbb{C}{\rm{P}}^{2}} = \mathscr{O}(-3)$ 
%provided by Calabi's technique
$$\omega_{CY} = (2\pi |\xi|^{2} + C)^{\frac{1}{3}} \omega_{\mathbb{C}{\rm{P}}^{2}} - \frac{\sqrt{-1}}{3}(2\pi |\xi|^{2}+ C)^{-\frac{2}{3}}\nabla \xi \wedge \overline{\nabla \xi},$$
for some constant $C>0$. Here we consider the local coordinates $(nP,\xi) \in \mathscr{O}(-3)|_{U}$, thus we have for the vertical and horizontal components of the above form the following local description
\begin{equation}
\label{C8S8.3Sub8.3.2Eq8.3.17}
\omega_{\mathbb{C}{\rm{P}}^{2}} = \displaystyle \frac{3\sqrt{-1}}{2 \pi} \partial \overline{\partial } \log \Big( 1 + \sum_{k=1}^{2}|z_{k}|^{2} \Big ) \ \ and \ \ \nabla \xi = d \xi + 3\xi \partial \log \Big( 1 + \sum_{k=1}^{2}|z_{k}|^{2}\Big ).
\end{equation}\\
Hence, $(\mathscr{O}(-3),\omega_{CY})$ defines an example of noncompact complete Calabi-Yau manifold.

As in the previous example the Calabi-Yau manifold obtained from the holomorphic line bundle $\mathscr{O}(-3) \to \mathbb{C}{\rm{P}}^{2}$ is a very interesting example of noncompact Calabi-Yau threefold which has many applications in Mathematical Physics, more precisely in String Theory, see for instance \cite[p. 429-431]{TOPVERTEX} and \cite{STRINGYAU}. \qed
\end{example}

The next example is a brief generalization of the description which we did above for $X_{P} = \mathbb{C}{\rm{P}}^{2}$.

\begin{example}[Calabi-Yau metrics on $\mathscr{O}(-n-1)$] \label{CY-CPn}
Let us briefly describe the application of Theorem \ref{C8S8.3Sub8.3.2Teo8.3.5} for $X_{P} = \mathbb{C}{\rm{P}}^{n}$. At first we recall some basic data related to the Lie algebra $\mathfrak{sl}(n+1,\mathbb{C})$. By fixing the Cartan subalgebra $\mathfrak{h} \subset \mathfrak{sl}(n+1,\mathbb{C})$ given by diagonal matrices whose the trace is equal zero, we have the set of simple roots given by
$$\Sigma = \Big \{ \alpha_{l} = \epsilon_{l} - \epsilon_{l+1} \ \Big | \ l = 1, \ldots,n\Big\},$$
here $\epsilon_{l} \colon {\text{diag}}\{a_{1},\ldots,a_{n+1} \} \mapsto a_{l}$, $ \forall l = 1, \ldots,n+1$. Therefore the set of positive roots is given by
$$\Pi^+ = \Big \{ \alpha = \epsilon_{i} - \epsilon_{j} \ \Big | \ i<j  \Big\}. $$

In this example we consider $\Theta = \Sigma \backslash \{\alpha_{1}\}$ and $P = P_\Theta$. Now we take the open set defined by the opposite big cell $U =  R_{u}(P_{\Theta})^{-}x_{0} \subset \mathbb{C}{\rm{P}}^{n}$, where $x_0=eP$ (trivial coset) and  
\begin{center}
 $ R_{u}(P_{\Theta})^{-} = \displaystyle \prod_{\alpha \in \Pi^{-} \backslash \langle \Theta \rangle^{-}}N_{\alpha}^{-}$, \ with \ $N_{\alpha}^{-} = \exp(\mathfrak{g}_{\alpha})$, $\forall \alpha \in \Pi^{-} \backslash \langle \Theta \rangle^{-}$.
 \end{center}
We remark that in this case the open set is parameterized by matrices $n \in B^{-}$ of the form
\begin{center}
$n = \begin{pmatrix}
1 & 0 &\cdots & 0 \\
z_{1} & 1  &\cdots & 0 \\                  
\ \vdots  & \vdots &\ddots  & \vdots  \\
z_{n} & 0 & \cdots &1 
 \end{pmatrix}$,
\end{center}
with $(z_{1},\ldots,z_{n}) \in \mathbb{C}^{n}$. Notice that the above coordinate system is induced directly from the exponential map $\exp \colon {\text{Lie}}(R_{u}(P)^{-}) \to R_{u}(P)^{-}$.

From this we can take a local section $s_{U} \colon U \to {\rm{SL}}(n+1,\mathbb{C})$, such that $s_{U}(nx_{0}) = n \in {\rm{SL}}(n+1,\mathbb{C})$. The expression of $\omega_{\mathbb{C}{\rm{P}}^{n}} \in c_{1}(\mathbb{C}{\rm{P}}^{n})$ over the opposite big cell is given by
$$\omega_{\mathbb{C}{\rm{P}}^{n}} = \displaystyle \frac{(n+1)}{2\pi} \sqrt{-1}\partial \overline{ \partial} \log \Big (1 + \sum_{l = 1}^{n}|z_{l}|^{2} \Big ).$$
Thus from Theorem \ref{C8S8.3Sub8.3.2Teo8.3.5} we have the following local description for the Calabi-Yau metric on $K_{\mathbb{C}{\rm{P}}^{n}} = \mathscr{O}(-n-1)$:
$$\omega_{CY} = \big (2\pi |\xi|^{2} + C \big )^{\frac{1}{n+1}} \omega_{\mathbb{C}{\rm{P}}^{n}} - \frac{\sqrt{-1}}{n+1} \big (2\pi |\xi|^{2} + C \big)^{-\frac{n}{n+1}}\nabla \xi \wedge \overline{\nabla \xi},$$
for some constant $C>0$, where $\omega_{\mathbb{C}{\rm{P}}^{n}}$ is locally described as above, and $\nabla \xi$ is locally given by
\begin{equation}
\nabla \xi = d\xi + (n+1)\xi\partial \log \Big (1 + \sum_{l = 1}^{n}|z_{l}|^{2} \Big ).
\end{equation}
Therefore, $(\mathscr{O}(-n-1),\omega_{CY})$ defines a complete $(n+1)$-dimensional noncompact Calabi-Yau manifold. 

%This example inspired us to use elements of Lie theory to describe Calabi's method for the particular case of Fano K\"{a}hler-Einstein manifolds defined by complex flag manifolds, see \cite[p. 284-285 ]{CALABIANSATZ}.\\

\end{example}

\begin{remark}
An important fact about the above well known examples is that the complex flag manifold $\mathbb{C}{\rm{P}}^{n}$ is also a toric manifold. Since the K\"{a}hler structure of toric manifolds are completely determined by combinatorial elements, see for example \cite{GKAHLER}, in some sense it makes the application of Calabi's technique somewhat manageable. Our point is that it is also possible to consider the underlying Lie theoretical data of $\mathbb{C}{\rm{P}}^{n}$ in order to compute the Calabi-Yau structure on its canonical line bundle $\mathscr{O}(-n-1)$. It is worth to point out that the construction of Calabi-Yau metrics by means of Calabi's technique on $K_{\mathbb{C}{\rm{P}}^{n}}$ was in fact the first example introduced in \cite{CALABIANSATZ}.
\end{remark}

\subsection{Generalized flag manifolds}

The result which we established in Theorem \ref{C8S8.3Sub8.3.2Teo8.3.5} allows us to describe explicitly a huge class of complete Calabi-Yau metrics beyond the toric setting. In what follows we describe some results which can be obtained from Theorem \ref{C8S8.3Sub8.3.2Teo8.3.5}.

\subsubsection{Wallach flag manifold}

The Wallach space $W_{6}$ is the homogeneous space defined by

\begin{center}

$W_{6} = {\rm{SU}}(3)/T^{2}$.

\end{center}
The manifold above also can be written as $W_{6} = {\rm{SL}}(3,\mathbb{C})/B$, where $B$ is a Borel subgroup of ${\rm{SL}}(3,\mathbb{C})$. This manifold is non-toric and appear in Wallach's classification of homogeneous spaces admitting metric of positive sectional curvature.  We have the following result:

\begin{proposition}
The total space of the canonical bundle $K_{W_{6}}$ over the Wallach space $W_{6} = {\rm{SL}}(3,\mathbb{C})/B$ admits a Calabi-Yau metric $\omega_{CY}$ (locally) defined by

\begin{center}

$\omega_{CY} = (2\pi |\xi|^{2} + C)^{\frac{1}{4}} \omega_{X_{B}} - \frac{\sqrt{-1}}{4}(2\pi |\xi|^{2} + C)^{-\frac{3}{4}}\nabla \xi \wedge \overline{\nabla \xi}$,

\end{center}
for some positive constant $C>0$, such that
\begin{equation}
\omega_{W_{6}} = \displaystyle \frac{\sqrt{-1}}{\pi} \Bigg [\partial \overline{\partial } \log \Big( 1 + \sum_{k=1}^{2}|z_{k}|^{2}\Big ) + \partial \overline{\partial } \log \Big( 1 + |z_{3}|^{2} + |z_{1}z_{3}-z_{2}|^{2}\Big ) \Bigg ],
\end{equation}\\
and 
\begin{equation}
\nabla \xi = d\xi + 2\xi \Bigg [\partial \log \Big( 1 + \sum_{k=1}^{2}|z_{k}|^{2}\Big ) + \partial \log \Big( 1 + |z_{3}|^{2} + |z_{1}z_{3}-z_{2}|^{2}\Big ) \Bigg ].
\end{equation}
\end{proposition}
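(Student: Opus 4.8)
The plan is to specialize Theorem \ref{C8S8.3Sub8.3.2Teo8.3.5} to the data $G^{\mathbb{C}} = \mathrm{SL}(3,\mathbb{C})$ and $P = B$, i.e. $\Theta = \emptyset$, so that $X_B = \mathrm{SL}(3,\mathbb{C})/B = W_6$. First I would record the root-theoretic input exactly as in Example \ref{ex-cp2}: $\Sigma = \{\alpha_1,\alpha_2\}$ with $\alpha_i = \epsilon_i - \epsilon_{i+1}$, and $\Pi^+ = \{\alpha_1,\alpha_2,\alpha_1+\alpha_2\}$. Since $\dim_{\mathbb{C}} W_6 = |\Pi^+| = 3$, we set $n = 3$ in the theorem; this accounts for the exponents $\tfrac14$ and $-\tfrac34$ appearing in $\omega_{CY}$.

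Next I would compute the integers $\langle\delta_B, h_{\alpha_i}^\vee\rangle$ that enter the quasi-potential. As $\Theta = \emptyset$ we have $\langle\Theta\rangle^+ = \emptyset$, hence $\delta_B = \sum_{\alpha\in\Pi^+}\alpha = 2\alpha_1 + 2\alpha_2$. Pairing against the coroots with the $A_2$ Cartan matrix $\langle\alpha_i, h_{\alpha_j}^\vee\rangle = a_{ij}$ gives $\langle\delta_B, h_{\alpha_1}^\vee\rangle = 2(2) + 2(-1) = 2$ and likewise $\langle\delta_B, h_{\alpha_2}^\vee\rangle = 2$. Thus both exponents $2\langle\delta_B,h_{\alpha_i}^\vee\rangle$ equal $4$, which is precisely what produces the overall factor $\tfrac{\sqrt{-1}}{\pi}$ in $\omega_{W_6}$ and the factor $2$ in $\nabla\xi$.

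The main computational step is to evaluate the two norms $\|n v_{\omega_{\alpha_i}}^+\|^2$ over the opposite big cell $U = N^- x_0$. I would parametrize $U$ by the lower-triangular unipotent matrices
\[
n = \begin{pmatrix} 1 & 0 & 0 \\ z_1 & 1 & 0 \\ z_2 & z_3 & 1 \end{pmatrix}, \qquad (z_1,z_2,z_3)\in\mathbb{C}^3,
\]
obtained from $\exp\colon \mathfrak{n}^- \to N^-$. For $\alpha_1$ one takes $V(\omega_{\alpha_1}) = \mathbb{C}^3$ with $v_{\omega_{\alpha_1}}^+ = e_1$, so $n v_{\omega_{\alpha_1}}^+ = (1,z_1,z_2)^{\mathrm{T}}$ and $\|n v_{\omega_{\alpha_1}}^+\|^2 = 1 + |z_1|^2 + |z_2|^2$. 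For $\alpha_2$ one takes $V(\omega_{\alpha_2}) = \bigwedge^2\mathbb{C}^3$ with $v_{\omega_{\alpha_2}}^+ = e_1\wedge e_2$, so that $n v_{\omega_{\alpha_2}}^+ = n e_1 \wedge n e_2$; expanding the $2\times 2$ minors in the basis $\{e_1\wedge e_2, e_1\wedge e_3, e_2\wedge e_3\}$ yields coefficients $1$, $z_3$, $z_1 z_3 - z_2$, whence $\|n v_{\omega_{\alpha_2}}^+\|^2 = 1 + |z_3|^2 + |z_1 z_3 - z_2|^2$.

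Finally I would substitute these two norms into the horizontal and vertical formulas furnished by the proof of Theorem \ref{C8S8.3Sub8.3.2Teo8.3.5}, namely $\omega_{W_6}|_U = \sqrt{-1}\,\partial\overline{\partial}(s_U^*\varphi)$ and $\nabla\xi = d\xi + \xi\,\partial\log\big(\prod_\alpha \|n v_{\omega_\alpha}^+\|^{2\langle\delta_B,h_\alpha^\vee\rangle}\big)$, with $s_U(nB) = n$. Using the exponents $=4$ computed above, both logarithms split as a sum of $\log(1+|z_1|^2+|z_2|^2)$ and $\log(1+|z_3|^2+|z_1z_3-z_2|^2)$, reproducing the stated expressions for $\omega_{W_6}$ and $\nabla\xi$; completeness and the Calabi-Yau condition are then inherited directly from the theorem. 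The only genuine obstacle is the second norm computation: one must correctly identify $V(\omega_{\alpha_2})$ as $\bigwedge^2\mathbb{C}^3$ with highest weight vector $e_1\wedge e_2$ and carry out the minor expansion, since a wrong choice of fundamental representation or of the coordinate ordering on $U$ would spoil the match with the quadratic term $|z_1 z_3 - z_2|^2$.
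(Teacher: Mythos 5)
Your proposal is correct and follows essentially the same route as the paper's proof: specialize Theorem \ref{C8S8.3Sub8.3.2Teo8.3.5} with $\Theta=\emptyset$, compute $\delta_B=2\alpha_1+2\alpha_2$ and $\langle\delta_B,h_{\alpha_i}^\vee\rangle=2$ from the $A_2$ Cartan matrix, parametrize the big cell by lower-triangular unipotent matrices, and evaluate the norms in $V(\omega_{\alpha_1})=\mathbb{C}^3$ and $V(\omega_{\alpha_2})=\bigwedge^2\mathbb{C}^3$ via the minor expansion giving $1+|z_3|^2+|z_1z_3-z_2|^2$. All the numerical factors ($\tfrac{\sqrt{-1}}{\pi}$, the coefficient $2$ in $\nabla\xi$, the exponents $\tfrac14$ and $-\tfrac34$) are accounted for exactly as in the paper.
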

\begin{proof} Since the metric
\begin{center}
$\omega_{CY} = (2\pi |\xi|^{2} + C)^{\frac{1}{4}} \omega_{X_{B}} - \frac{\sqrt{-1}}{4}(2\pi |\xi|^{2} + C)^{-\frac{3}{4}}\nabla \xi \wedge \overline{\nabla \xi}$,
\end{center}
is the Ricci-flat metric provided by Calabi's technique, we can apply Theorem \ref{C8S8.3Sub8.3.2Teo8.3.5} to describe its local expression.

Here again we consider for $G^{\mathbb{C}} = {\rm{SL}}(3,\mathbb{C})$ the same choice of Cartan subalgebra and conventions for the simple root system as in example \ref{ex-cp2}. Therefore the simple root system can be described as follows

\begin{center}

$\Sigma = \Big \{ \alpha_{1} = \epsilon_{1}-\epsilon_{2}, \alpha_{2} = \epsilon_{2} - \epsilon_{3}\Big \}$,

\end{center}
and the set of positive roots in this case is given by $\Pi^{+} = \{\alpha_{1},\alpha_{2},\alpha_{1}+\alpha_{2}\}$. Now we fix the canonical basis $\{e_{j}\}$ for $\mathbb{C}^{3}$ and the basis $\{e_{i} \wedge e_{j}\}_{i<j}$ for $\bigwedge^{2}(\mathbb{C}^{3})$.

We first observe that for ${W_{6}} = {\rm{SL}}(3,\mathbb{C})/B$ the quasi-potential $\varphi \colon {\rm{SL}}(3,\mathbb{C}) \to \mathbb{R}$ is given by
$$\varphi(g) =  \displaystyle \frac{\langle \delta_{B}, h_{\alpha_{1}}^{\vee}\rangle}{2\pi} \log ||g v_{\omega_{\alpha_{1}}}^{+}||^{2} +  \displaystyle \frac{\langle \delta_{B}, h_{\alpha_{2}}^{\vee}\rangle}{2\pi} \log ||g v_{\omega_{\alpha_{2}}}^{+}||^{2},$$
where $V(\omega_{\alpha_{1}}) =  \mathbb{C}^{3}$ and $V(\omega_{\alpha_{2}}) = \bigwedge^{2}(\mathbb{C}^{3})$, and the highest-weight vectors are, respectively, given by
\begin{center}
$ v_{\omega_{\alpha_{1}}}^{+} = e_{1}$ \ \ and \ \ $v_{\omega_{\alpha_{2}}}^{+} = e_{1} \wedge e_{2}$.
\end{center}

In order to compute the local expression of $\omega_{W_{6}} \in c_{1}({W_{6}})$, we take the open set defined by the opposite big cell $U = B^{-}x_{0} \subset {W_{6}}$, on which we have coordinates

\begin{center}

$n = \begin{pmatrix}
1 & 0 & 0 \\
z_{1} & 1 & 0 \\                  
z_{2}  & z_{3} & 1
 \end{pmatrix}$,

\end{center}
with $z_{1},z_{2},z_{3} \in \mathbb{C}$. 

\begin{remark}
\label{coordinates}
Notice that in this case the coordinates are also obtained from the exponential map. However, the coordinates $z_{1},z_{2},z_{3} \in \mathbb{C}$ are defined by algebraically independent polynomials, since the number of polynomials and the number of coordinates are the same we will not specify the polynomials.
\end{remark}

By applying Proposition \ref{Prop2.8} for the local section $s_{U} \colon U \to {\rm{SL}}(3,\mathbb{C})$ defined by $s_{U}(nB) = n$, $\forall nB \in U$, we obtain

\begin{center}

$\omega_{W_{6}} = \displaystyle \frac{\langle \delta_{B}, h_{\alpha_{1}}^{\vee}\rangle}{2\pi} \sqrt{-1}\partial \overline{\partial } \log \Big( 1 + |z_{1}|^{2} + |z_{2}|^{2}\Big ) + \frac{\langle \delta_{B}, h_{\alpha_{2}}^{\vee}\rangle}{2\pi} \sqrt{-1}\partial \overline{\partial } \log \Big( 1 + |z_{3}|^{2} + |z_{1}z_{3}-z_{2}|^{2}\Big )$.

\end{center}
From the Cartan matrix of $\mathfrak{sl}(3,\mathbb{C})$ we have $\langle \delta_{B}, h_{\alpha_{1}}^{\vee}\rangle = \langle \delta_{B}, h_{\alpha_{2}}^{\vee}\rangle = 2$. Notice that in this case we have $ \delta_{B} = 2\alpha_{1} + 2\alpha_{2}$. Therefore, from Theorem \ref{C8S8.3Sub8.3.2Teo8.3.5} we obtain a Calabi-Yau metric on $K_{W_{6}}$ (locally) given by

\begin{center}

$\omega_{CY} = (2\pi |\xi|^{2} + C)^{\frac{1}{4}} \omega_{X_{B}} - \frac{\sqrt{-1}}{4}(2\pi |\xi|^{2} + C)^{-\frac{3}{4}}\nabla \xi \wedge \overline{\nabla \xi}$,

\end{center}
for some $C>0$. We take coordinates $(n,\xi) \in K_{W_{6}}|_{U}$ in order to obtain the following expressions
\begin{equation}
\omega_{X_{B}} = \displaystyle \frac{\sqrt{-1}}{\pi} \Bigg [\partial \overline{\partial } \log \Big( 1 + \sum_{k=1}^{2}|z_{k}|^{2}\Big ) + \partial \overline{\partial } \log \Big( 1 + |z_{3}|^{2} + |z_{1}z_{3}-z_{2}|^{2}\Big ) \Bigg ],
\end{equation}\\
and 
\begin{equation}
\nabla \xi = d\xi + 2\xi \Bigg [\partial \log \Big( 1 + \sum_{k=1}^{2}|z_{k}|^{2}\Big ) + \partial \log \Big( 1 + |z_{3}|^{2} + |z_{1}z_{3}-z_{2}|^{2}\Big ) \Bigg ].
\end{equation}\\
From these we have a noncompact complete Calabi-Yau manifold $(K_{X_{B}},\omega_{CY})$ with complex dimension $4$.\end{proof}

%The interesting feature of the above result is to show how the hidden Lie-theoretical structures allows us to calculate explicitly the expression of the Ricci-flat metric obtained by Calabi's method.\\

\subsubsection{Complex Grassmannians}

The next result can be seen as a prototype for the application of Theorem \ref{C8S8.3Sub8.3.2Teo8.3.5} on K\"{a}hler manifolds defined by complex Grassmannians.

\begin{proposition}
The total space of the canonical bundle $K_{{\rm{Gr}}(2,\mathbb{C}^{4})}$ over the complex Grassmannian ${\rm{Gr}}(2,\mathbb{C}^{4})$ admits a complete Calabi-Yau metric $\omega_{CY}$ (locally) described by 

\begin{center}
$\omega_{CY} = (2\pi |\xi|^{2} + C)^{\frac{1}{5}} \omega_{{\rm{Gr}}(2,\mathbb{C}^{4})}  - \frac{\sqrt{-1}}{5}(2\pi |\xi|^{2} + C)^{-\frac{4}{5}}\nabla \xi \wedge \overline{\nabla \xi}$,
\end{center}
for some positive constant $C>0$, such that

\begin{equation}
\omega_{{\rm{Gr}}(2,\mathbb{C}^{4})} =  \displaystyle \frac{2\sqrt{-1}}{\pi}  \partial \overline{\partial} \log \Big (1+ \sum_{k = 1}^{4}|z_{k}|^{2} + \bigg |\det \begin{pmatrix}
 z_{1} & z_{3} \\
 z_{2} & z_{4}
\end{pmatrix} \bigg |^{2} \Big),
\end{equation}\\
 and 
 \begin{equation}
\nabla \xi = d\xi + 4 \xi \partial \log \Big (1+ \sum_{k = 1}^{4}|z_{k}|^{2} + \bigg |\det \begin{pmatrix}
 z_{1} & z_{3} \\
 z_{2} & z_{4}
\end{pmatrix} \bigg |^{2} \Big),
\end{equation}

\end{proposition}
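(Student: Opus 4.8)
The plan is to specialize Theorem~\ref{C8S8.3Sub8.3.2Teo8.3.5} to $G^{\mathbb{C}} = \mathrm{SL}(4,\mathbb{C})$ with $\Theta = \Sigma \setminus \{\alpha_2\}$, so that $X_P = {\rm{Gr}}(2,\mathbb{C}^{4})$. First I would record the relevant root data: for $\mathfrak{sl}(4,\mathbb{C}) = A_3$ the positive roots are $\epsilon_i - \epsilon_j$ with $i<j$, and marking the middle node gives $\langle \Theta \rangle^+ = \{\alpha_1,\alpha_3\}$. Hence
$$\Pi^+ \setminus \langle \Theta \rangle^+ = \{\alpha_2,\ \alpha_1+\alpha_2,\ \alpha_2+\alpha_3,\ \alpha_1+\alpha_2+\alpha_3\},$$
which has four elements. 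This simultaneously confirms $\dim_{\mathbb{C}} X_P = 4$ (so that the exponents $\tfrac{1}{n+1} = \tfrac15$ and $\tfrac{n}{n+1} = \tfrac45$ in the statement are correct) and fixes the four big-cell coordinates $z_1,z_2,z_3,z_4$.

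Next I would compute the single pairing that Proposition~\ref{Prop2.8} requires, since $\Sigma \setminus \Theta = \{\alpha_2\}$. Summing the four roots above gives $\delta_P = 2\alpha_1 + 4\alpha_2 + 2\alpha_3$, and pairing against $h_{\alpha_2}^\vee$ via the $A_3$ Cartan matrix yields $\langle \delta_P, h_{\alpha_2}^\vee\rangle = 2(-1) + 4(2) + 2(-1) = 4$. This is exactly the constant producing the coefficient $\tfrac{2\sqrt{-1}}{\pi} = \tfrac{4\sqrt{-1}}{2\pi}$ in $\omega_{{\rm{Gr}}(2,\mathbb{C}^{4})}$ and the factor $4$ multiplying $\xi\,\partial \log(\cdots)$ in $\nabla \xi$ (the same bookkeeping that gives $\langle \delta_B, h^\vee\rangle = 2$ in the Wallach case and $3$ for $\mathbb{C}{\rm{P}}^2$ is thus consistent).

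The one genuine computation is the evaluation of $\|n v_{\omega_{\alpha_2}}^+\|^2$ on the opposite big cell. Because the marked node has coefficient $1$ in the highest root, the opposite unipotent radical $R_u(P_\Theta)^-$ is abelian, so the exponential coordinates are linear and a representative is the block-lower-triangular matrix $n$ with identity diagonal blocks and lower-left block $Z = \left(\begin{smallmatrix} z_1 & z_3 \\ z_2 & z_4 \end{smallmatrix}\right)$. Taking $V(\omega_{\alpha_2}) = \bigwedge^2 \mathbb{C}^4$ with highest weight vector $v_{\omega_{\alpha_2}}^+ = e_1 \wedge e_2$, I would expand $n(e_1\wedge e_2) = (ne_1)\wedge(ne_2)$ in the orthonormal basis $\{e_i \wedge e_j\}_{i<j}$; the nonzero coefficients are $1$, the four entries of $Z$, and $\det Z$, so that
$$\|n v_{\omega_{\alpha_2}}^+\|^2 = 1 + \sum_{k=1}^{4}|z_k|^2 + \Big|\det\begin{pmatrix} z_1 & z_3 \\ z_2 & z_4 \end{pmatrix}\Big|^2.$$
Substituting this, together with $\langle \delta_P, h_{\alpha_2}^\vee\rangle = 4$, into the quasi-potential of Proposition~\ref{Prop2.8} and into the horizontal and vertical components furnished by Theorem~\ref{C8S8.3Sub8.3.2Teo8.3.5} produces the asserted formulas for $\omega_{{\rm{Gr}}(2,\mathbb{C}^{4})}$ and $\nabla \xi$; completeness and Ricci-flatness are inherited directly from that theorem.

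I do not expect a real obstacle here. The only points requiring care are matching the labelling $z_1,z_2,z_3,z_4$ with the entries of $Z$ so that the minor appears as $z_1 z_4 - z_2 z_3$, and confirming (as in the Remark following Theorem~\ref{AZADBISWAS}) that the norm on $\bigwedge^2 \mathbb{C}^4$ is the one induced by the standard $G$-invariant inner product on $\mathbb{C}^4$, so that $\{e_i\wedge e_j\}_{i<j}$ is genuinely orthonormal and the cross terms in the wedge expansion vanish.
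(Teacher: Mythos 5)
Your proposal is correct and follows essentially the same route as the paper: specialize Theorem \ref{C8S8.3Sub8.3.2Teo8.3.5} to $\mathrm{SL}(4,\mathbb{C})$ with $\Theta=\Sigma\setminus\{\alpha_2\}$, compute $\delta_P=2\alpha_1+4\alpha_2+2\alpha_3$ and $\langle\delta_P,h_{\alpha_2}^\vee\rangle=4$ from the $A_3$ Cartan matrix, parameterize the opposite big cell by the block-lower-triangular matrix with lower-left block $\left(\begin{smallmatrix} z_1 & z_3\\ z_2 & z_4\end{smallmatrix}\right)$, and expand $n(e_1\wedge e_2)$ in $\{e_i\wedge e_j\}_{i<j}$ to get the potential $1+\sum_k|z_k|^2+|\det Z|^2$. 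Your added observation that the nilradical is abelian (so the exponential coordinates are exactly the matrix entries) is a nice justification the paper leaves implicit.
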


\begin{proof} Consider $G^{\mathbb{C}} = {\rm{SL}}(4,\mathbb{C})$, here we use the same choice of Cartan subalgebra and conventions for the simple root system as in the previous examples. Since our simple root system is given by
$$\Sigma = \Big \{ \alpha_{1} = \epsilon_{1}-\epsilon_{2}, \alpha_{2} = \epsilon_{2} - \epsilon_{3}, \alpha_{3} = \epsilon_{3} - \epsilon_{4}\Big \},$$
by taking $\Theta = \Sigma \backslash \{\alpha_{2}\}$ we obtain  for $P = P_{\Theta}$ the flag manifold $X_{P} = {\rm{Gr}}(2,\mathbb{C}^{4})$. Notice that in this case we have
$${\text{Pic}}({\rm{Gr}}(2,\mathbb{C}^{4})) = \mathbb{Z}[\eta_{\alpha_{2}}].$$
Thus from Proposition \ref{C8S8.2Sub8.2.3Eq8.2.35} it follows that
$$-K_{{\rm{Gr}}(2,\mathbb{C}^{4})} = L_{\chi_{\omega_{\alpha_{2}}}}^{\otimes \langle \delta_{P},h_{\alpha_{2}}^{\vee} \rangle}.$$

By considering our Lie-theoretical conventions, we have
$$\Pi^{+} \backslash \langle \Theta \rangle^{+} = \Big \{ \alpha_{2}, \alpha_{1}+\alpha_{2},\alpha_{2}+\alpha_{3},\alpha_{1} + \alpha_{2} + \alpha_{3}\Big \},$$
hence

$$\delta_{P} = \displaystyle \sum_{\alpha \in \Pi^{+} \backslash \langle \Theta \rangle^{+}} \alpha = 2\alpha_{1} + 4 \alpha_{2} + 2\alpha_{3}.$$
By means of the Cartan matrix of  $\mathfrak{sl}(4,\mathbb{C})$ we can compute 
$$ \langle \delta_{P},h_{\alpha_{2}}^{\vee} \rangle = 4 \implies -K_{{\rm{Gr}}(2,\mathbb{C}^{4})} = L_{\chi_{\omega_{\alpha_{2}}}}^{\otimes4}.$$  
In what follows we will use the following notation: 
$$L_{\chi_{\omega_{\alpha_{2}}}}^{\otimes k} := \mathscr{O}_{\alpha_{2}}(k),$$
for every $k \in \mathbb{Z}$, therefore we have $K_{{\rm{Gr}}(2,\mathbb{C}^{4})} = \mathscr{O}_{\alpha_{2}}(-4)$. In order to compute the local expression of $\omega_{{\rm{Gr}}(2,\mathbb{C}^{4})} \in c_{1}(\mathscr{O}_{\alpha_{2}}(-4))$, we observe that in this case the quasi-potential $\varphi \colon  {\rm{SL}}(4,\mathbb{C}) \to \mathbb{R}$ is given by

\begin{center}

$\varphi(g) = \displaystyle \frac{\langle \delta_{P}, h_{\alpha_{2}}^{\vee}\rangle}{2\pi} \log ||g v_{\omega_{\alpha_{2}}}^{+}||^{2} = \displaystyle \frac{2}{\pi} \log ||g v_{\omega_{\alpha_{2}}}^{+}||^{2}$,

\end{center}
where $V(\omega_{\alpha_{2}}) = \bigwedge^{2}(\mathbb{C}^{4})$ and $v_{\omega_{\alpha_{2}}}^{+} =  e_{1} \wedge e_{2}$. We fix the basis $\{e_{i} \wedge e_{j}\}_{i<j}$ for $V(\omega_{\alpha_{2}}) = \bigwedge^{2}(\mathbb{C}^{4})$. Similarly to the previous examples we consider the open set defined by the opposite big cell $U = B^{-}x_{0} \subset {\rm{Gr}}(2,\mathbb{C}^{4})$. In this case we have the local coordinates $nx_{0} \in U$ given by

\begin{center}

$n = \begin{pmatrix}
1 & 0 & 0 & 0\\
0 & 1 & 0 & 0 \\                  
z_{1}  & z_{3} & 1 & 0 \\
z_{2}  & z_{4} & 0 & 1
 \end{pmatrix}$,

\end{center}
 with $z_{i} \in \mathbb{C}$, $i  =1,2,3,4$. Notice that the above coordinates are obtained directly from the exponential map $\exp \colon {\text{Lie}}(R_{u}(P)^{-}) \to R_{u}(P)^{-}$. From this we obtain

\begin{center}

$\varphi(n) = \displaystyle \frac{2}{\pi} \log \Big ( 1 + \sum_{k = 1}^{4}|z_{k}|^{2} + \bigg |\det \begin{pmatrix}
 z_{1} & z_{3} \\
 z_{2} & z_{4}
\end{pmatrix} \bigg |^{2} \Big)$,

\end{center}
and the following local expression for  $\omega_{{\rm{Gr}}(2,\mathbb{C}^{4})} \in c_{1}(\mathscr{O}_{\alpha_{2}}(-4))$
\begin{equation}
\label{C8S8.3Sub8.3.2Eq8.3.21}
\omega_{{\rm{Gr}}(2,\mathbb{C}^{4})} =  \displaystyle \frac{2 \sqrt{-1}}{\pi}  \partial \overline{\partial} \log \Big (1+ \sum_{k = 1}^{4}|z_{k}|^{2} + \big |\det \begin{pmatrix}
 z_{1} & z_{3} \\
 z_{2} & z_{4}
\end{pmatrix} \big |^{2} \Big).
\end{equation}\\
Now we can apply Theorem \ref{C8S8.3Sub8.3.2Teo8.3.5} in order to get a Ricci-flat metric on $\mathscr{O}_{\alpha_{2}}(-4)$ which is locally described by

\begin{center}
$\omega_{CY} = (2\pi |\xi|^{2} + C)^{\frac{1}{5}} \omega_{{\rm{Gr}}(2,\mathbb{C}^{4})}  - \frac{\sqrt{-1}}{5}(2\pi |\xi|^{2} + C)^{-\frac{4}{5}}\nabla \xi \wedge \overline{\nabla \xi}$,
\end{center}
for some constant $C>0$, where $\omega_{{\rm{Gr}}(2,\mathbb{C}^{4})}$ can be written as above in \ref{C8S8.3Sub8.3.2Eq8.3.21} and $\nabla \xi$ is locally given by
\begin{equation}
\label{C8S8.3Sub8.3.2Eq8.3.22}
\nabla \xi = d\xi + 4 \xi \partial \log \Big (1+ \sum_{k = 1}^{4}|z_{k}|^{2} + \bigg |\det \begin{pmatrix}
 z_{1} & z_{3} \\
 z_{2} & z_{4}
\end{pmatrix} \bigg |^{2} \Big).
\end{equation}\\
Thus $(\mathscr{O}_{\alpha_{2}}(-4),\omega_{CY})$ defines a complete noncompact Calabi-Yau manifold of complex dimension $5$. 
\end{proof}

The ideas of the above example can be easily extended to any complex Grassmannian ${\rm{Gr}}(k,\mathbb{C}^{n})$. Actually, by fixing the usual Lie theoretical data for the Lie algebra $ \mathfrak{sl}(n,\mathbb{C})$, we can describe the complete Ricci-flat metric on the total space of the canonical bundle
$$K_{{\rm{Gr}}(k,\mathbb{C}^{n})} \to {\rm{Gr}}(k,\mathbb{C}^{n}),$$
via Lie-theoretical objects like fundamental representations and simple root system. Notice that in the general case of complex Grassmannians we have ${\rm{Gr}}(k,\mathbb{C}^{n}) = {\rm{SL}}(n,\mathbb{C})/P_{\Sigma \backslash \{\alpha_{k}\}}$.

\subsubsection{Full flag manifold of ${\rm{SL}}(n+1,\mathbb{C})$}

Now we describe a more general result concerned with full flag manifolds 

\begin{theorem}

Consider $G^{\mathbb{C}} = {\rm{SL}}(n+1,\mathbb{C})$ and $B  \subset G^{\mathbb{C}} = {\rm{SL}}(n+1,\mathbb{C})$ (Borel subgroup). Then the total space of the canonical bundle $K_{X_{B}}$ over the complex full flag manifold $X_{B} = {\rm{SL}}(n+1,\mathbb{C})/B$ admits a complete Calabi-Yau metric $\omega_{CY}$ (locally) described by 

\begin{center}

$\omega_{CY} = (2\pi |\xi|^{2} + C)^{\frac{2}{n(n+1)+2}} \omega_{X_{B}} - \frac{2 \sqrt{-1}}{n(n+1)+2}(2\pi |\xi|^{2} + C)^{-\frac{n(n+1)}{n(n+1)+2}}\nabla \xi \wedge \overline{\nabla \xi},$

\end{center}
for some positive constant $C>0$, such that

\begin{itemize}

    \item $\omega_{X_{B}} = \displaystyle  \sum_{k = 1}^{n}\frac{\langle \delta_{B},h_{\alpha_{k}}^{\vee} \rangle}{2\pi} \sqrt{-1}\partial \overline{\partial}\log \Bigg (1 + \sum_{I \neq I_{0,k}} \big|\Delta_{I}^{(k)}(n^{-}(z)) \big|^{2}\Bigg)$; \ ( Horizontal )
    
    \item $\nabla \xi = d\xi + \displaystyle  \sum_{k = 1}^{n}\langle \delta_{B},h_{\alpha_{k}}^{\vee} \rangle \xi \partial \log \Bigg (1 + \sum_{I \neq I_{0,k}} \big|\Delta_{I}^{(k)}(n^{-}(z)) \big|^{2}\Bigg)$. \ (Vertical)
    
\end{itemize}

\end{theorem}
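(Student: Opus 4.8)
The plan is to obtain this statement as a direct specialization of Theorem~\ref{C8S8.3Sub8.3.2Teo8.3.5} to the parabolic subgroup $P = B = P_{\emptyset}$, so that $\Sigma \setminus \Theta = \Sigma = \{\alpha_1,\ldots,\alpha_n\}$ and the product over $\Sigma\setminus\Theta$ runs over \emph{all} simple roots. The first step is to pin down the complex dimension. Since $X_B = {\rm SL}(n+1,\mathbb{C})/B$ is the full flag manifold of type $A_n$, its complex dimension equals the number of positive roots, $\dim_{\mathbb{C}} X_B = |\Pi^+| = \binom{n+1}{2} = \tfrac{n(n+1)}{2}$. Writing $N := \tfrac{n(n+1)}{2}$ and substituting into the exponents $\tfrac{1}{N+1}$, $-\tfrac{N}{N+1}$ and the prefactor $\tfrac{\sqrt{-1}}{N+1}$ of the general Calabi--Yau form gives precisely $\tfrac{2}{n(n+1)+2}$, $-\tfrac{n(n+1)}{n(n+1)+2}$ and $\tfrac{2\sqrt{-1}}{n(n+1)+2}$, which already accounts for all the numerical data in the displayed $\omega_{CY}$.

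The second step is to unwind the quasi-potential from Proposition~\ref{Prop2.8}. For $P=B$ it reads $\varphi(g) = \tfrac{1}{2\pi}\sum_{k=1}^{n}\langle \delta_B, h_{\alpha_k}^{\vee}\rangle \log\|g v_{\omega_{\alpha_k}}^{+}\|^2$. Over the opposite big cell $U = B^- x_0$ I would use the local section $s_U(n^- B) = n^-$, where $n^-=n^-(z)\in R_u(B)^-$ is the lower unitriangular matrix whose $\tfrac{n(n+1)}{2}$ free entries $z_{ij}$ ($i>j$) serve as holomorphic coordinates. Exactly as in the proof of Theorem~\ref{C8S8.3Sub8.3.2Teo8.3.5}, the horizontal component is $\omega_{X_B}|_U = \sqrt{-1}\,\partial\overline\partial (s_U^*\varphi)$ and the vertical connection form is $\nabla\xi = d\xi + 2\pi\,\xi\,\partial(s_U^*\varphi)$, so everything reduces to evaluating the norms $\|n^-(z)\,v_{\omega_{\alpha_k}}^{+}\|^2$.

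The central computation, and the one place requiring genuine (if routine) work, is the identification of these norms with sums of squared Pl\"ucker coordinates. Here I would use that the $k$-th fundamental representation of ${\rm SL}(n+1,\mathbb{C})$ is $V(\omega_{\alpha_k}) = \bigwedge^{k}(\mathbb{C}^{n+1})$ with highest weight vector $v_{\omega_{\alpha_k}}^{+} = e_1\wedge\cdots\wedge e_k$. Expanding $n^-(z)\,(e_1\wedge\cdots\wedge e_k)$ in the orthonormal basis $\{e_I\}_{|I|=k}$, the coefficient of $e_I$ is the maximal minor $\Delta_I^{(k)}(n^-(z))$ built from the rows indexed by $I$ and the first $k$ columns of $n^-(z)$; hence $\|n^-(z)\,v_{\omega_{\alpha_k}}^{+}\|^2 = \sum_{|I|=k}|\Delta_I^{(k)}(n^-(z))|^2$. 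The distinguished index $I_{0,k}=\{1,\ldots,k\}$ corresponds to the top-left $k\times k$ block, which is lower unitriangular and therefore has minor equal to $1$; separating this term produces the normalized factor $1 + \sum_{I\neq I_{0,k}}|\Delta_I^{(k)}(n^-(z))|^2$.

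Assembling these pieces yields the two displayed formulas: the horizontal form becomes $\sum_{k=1}^n \tfrac{\langle\delta_B,h_{\alpha_k}^\vee\rangle}{2\pi}\sqrt{-1}\,\partial\overline\partial\log\bigl(1+\sum_{I\neq I_{0,k}}|\Delta_I^{(k)}(n^-(z))|^2\bigr)$ and the vertical connection becomes $\nabla\xi = d\xi + \sum_{k=1}^n \langle\delta_B,h_{\alpha_k}^\vee\rangle\,\xi\,\partial\log\bigl(1+\sum_{I\neq I_{0,k}}|\Delta_I^{(k)}(n^-(z))|^2\bigr)$, completing the proof. I expect no serious obstacle: completeness and Ricci-flatness are inherited verbatim from Theorem~\ref{C8S8.3Sub8.3.2Teo8.3.5}, and the only content is the exterior-power realization of the fundamental weights together with the observation that the top-left minor normalizes to $1$.
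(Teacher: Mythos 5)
Your proposal is correct and follows essentially the same route as the paper's own proof: specialize Theorem \ref{C8S8.3Sub8.3.2Teo8.3.5} to $\Theta=\emptyset$, compute $\dim_{\mathbb{C}}X_B=\tfrac{n(n+1)}{2}$ to get the exponents, pull back the quasi-potential along the section $s_U(n^-B)=n^-$ over the opposite big cell, and identify $\|n^-(z)(e_1\wedge\cdots\wedge e_k)\|^2$ with $1+\sum_{I\neq I_{0,k}}|\Delta_I^{(k)}(n^-(z))|^2$ via the Pl\"ucker expansion and the observation that the top-left unitriangular minor equals $1$. No gaps; this matches the paper's argument step for step.
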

\begin{proof}
Consider $G^{\mathbb{C}} = {\rm{SL}}(n+1,\mathbb{C})$ and fix the same Lie-theoretical data as in example \ref{CY-CPn}. By taking $\Theta =  \emptyset$ we have the complex flag manifold $X_{B} = {\rm{SL}}(n+1,\mathbb{C})/B$, where $B \subset {\rm{SL}}(n+1,\mathbb{C})$ is the standard Borel subgroup. By Calabi's technique and our Theorem \ref{C8S8.3Sub8.3.2Teo8.3.5} it follows that the metric
$$\omega_{CY} = (2\pi |\xi|^{2} + C)^{\frac{2}{n(n+1)+2}} \omega_{X_{B}} - \frac{2 \sqrt{-1}}{n(n+1)+2}(2\pi |\xi|^{2} + C)^{-\frac{n(n+1)}{n(n+1)+2}}\nabla \xi \wedge \overline{\nabla \xi}$$
defines a Ricci-flat metric on $K_{X_{B}}$. In order to verify the second assertion let us introduce some notations. Let $U = N^{-}B \subset X_{B}$ be the opposite big cell, where $N^{-} = \exp(\mathfrak{n}^{-})$. This open set is parameterized by the holomorphic coordinates 
$$n = \begin{pmatrix}
  1 & 0 & 0 & \cdots & 0 \\
  z_{21} & 1 & 0 & \cdots & 0  \\
  z_{31} & z_{32} & 1 &\cdots & 0 \\
  \vdots & \vdots & \vdots & \ddots & \vdots \\
  z_{n+1,1} & z_{n+1,2} & z_{n+1,3} & \cdots & 1 
 \end{pmatrix},$$
where $ n = n^{-}(z) \in N^{-}$ and $z = (z_{ij}) \in \mathbb{C}^{\frac{n(n+1)}{2}}$. Notice that the above parameterization is induced from the exponential map, however each coordinate is represented by a polynomial, we will not specify the polynomials expressions by the same reason explained in Remark \ref{coordinates}.

Given $g \in {\rm{SL}}(n+1,\mathbb{C})$ we denote by
$$g = \begin{pmatrix}
  g_{11} & g_{12} & g_{13} & \cdots & g_{1,n+1} \\
  g_{21} & g_{22} & g_{23} & \cdots & g_{2,n+1}  \\
  g_{31} & g_{32} & g_{33}&\cdots & g_{3,n+1} \\
  \vdots & \vdots & \vdots & \ddots & \vdots \\
  g_{n+1,1} & g_{n+1,2} & g_{n+1,3} & \cdots & g_{n+1,n+1} 
 \end{pmatrix}.$$
 
We define for each subset $I = \{i_{1} < \cdots < i_{k}\} \subset \{1,\cdots,n+1\}$, with $1 \leq k \leq n$, the following polynomial functions $\Delta_{I}^{(k)} \colon {\rm{SL}}(n+1,\mathbb{C}) \to \mathbb{C}$, such that 

$$\Delta_{I}^{(k)}(g) = \det \begin{pmatrix}
  g_{i_{1}1} & g_{i_{1}2} & \cdots & g_{i_{1} k} \\
  g_{i_{2}1} & g_{i_{2}2} & \cdots & g_{i_{2} k}  \\
  \vdots & \vdots& \ddots & \vdots \\
  g_{i_{k}1} & g_{i_{k}2} & \cdots & g_{i_{k}k} 
 \end{pmatrix},$$
and we set $I_{0,k} = \{1,2,\ldots,k\}$. We have 
\begin{equation}
\label{C8S8.3Sub8.3.2Eq8.3.23}
g \cdot (e_{1} \wedge \ldots \wedge e_{k}) = \Delta_{I_{0,k}}^{(k)}(g) e_{1} \wedge \ldots \wedge e_{k} +  \displaystyle \sum_{I \neq I_{0,k}}\Delta_{I}^{(k)}(g) e_{i_{1}} \wedge \ldots \wedge e_{i_{k}}
\end{equation}
for $e_{i_{1}} \wedge \ldots \wedge e_{i_{k}} \in V(\omega_{\alpha_{k}}) = \bigwedge^{k}(\mathbb{C}^{n+1})$ and $I = \{i_{1} < \cdots < i_{k}\} \subset \{1,\cdots,n+1\}$.

By applying Theorem \ref{C8S8.3Sub8.3.2Teo8.3.5} we obtain the following expression for the quasi-potential $\varphi \colon {\rm{SL}}(n+1,\mathbb{C}) \to \mathbb{R}$

$$\varphi(g) = \displaystyle  \frac{1}{2\pi} \log \Big (\prod_{k = 1}^{n} \big| \big|g(e_{1} \wedge \ldots \wedge e_{k})\big|\big|^{2\langle \delta_{B},h_{\alpha_{k}}^{\vee} \rangle} \Big).$$
Notice that we can also write
$$\varphi(g) = \displaystyle  \sum_{k = 1}^{n}\frac{\langle \delta_{B},h_{\alpha_{k}}^{\vee} \rangle}{2\pi} \log \Big ( \big| \big|g(e_{1} \wedge \ldots \wedge e_{k})\big|\big|^{2} \Big),$$
remember that $\delta_{B} = \sum_{\alpha \in \Pi^{+}} \alpha$. Now by taking a local section $s_{U} \colon U \to {\rm{SL}}(n+1,\mathbb{C})$, where $U = N^{-}B \subset X_{B}$, defined by 
\begin{center}

$s_{U}(n^{-}(z)B) = n^{-}(z)  \in {\rm{SL}}(n+1,\mathbb{C})$, \ \ $\forall n^{-}(z)B \in U$, 

\end{center}
we obtain the following local expression

\begin{center}

$\omega_{X_{B}}|_{U} = \sqrt{-1} \partial \overline{\partial}(s_{U}^{\ast}(\varphi))$.

\end{center}
Hence, we need to determine $s_{U}^{\ast}(\varphi)$ in order to compute the local expression of $\omega_{X_{B}}$ and $\omega_{CY}$ . By definition of $s_{U}$ we have

\begin{center}

$s_{U}^{\ast}(\varphi)(n^{-}(z)B) = \varphi(s_{U}(n^{-}(z)B)) = \varphi(n^{-}(z))$. 

\end{center}
From equation \ref{C8S8.3Sub8.3.2Eq8.3.23} and the functions $\Delta_{I}^{(k)} \colon {\rm{SL}}(n+1,\mathbb{C}) \to \mathbb{C}$ we obtain
$$\varphi(n^{-}(z)) = \displaystyle  \sum_{k = 1}^{n}\frac{\langle \delta_{B},h_{\alpha_{k}}^{\vee} \rangle}{2\pi} \log \Big (1 + \sum_{I \neq I_{0,k}} \big|\Delta_{I}^{(k)}(n^{-}(z)) \big|^{2}\Big),$$
therefore we have 
$$\omega_{X_{B}}|_{U} = \displaystyle  \sum_{k = 1}^{n}\frac{\langle \delta_{B},h_{\alpha_{k}}^{\vee} \rangle}{2\pi} \sqrt{-1}\partial \overline{\partial}\log \Big (1 + \sum_{I \neq I_{0,k}} \big|\Delta_{I}^{(k)}(n^{-}(z)) \big|^{2}\Big).$$

Now we consider local coordinates $(n(z)B,\xi) \in K_{X_{B}}|_{U}$. On these coordinates we can write 
$$
\nabla \xi|_{U} = d\xi + 2\pi \xi \partial (s_{U}^{\ast}(\varphi)),
$$
thus we obtain
$$\nabla \xi|_{U} = d\xi + \displaystyle  \sum_{k = 1}^{n}\langle \delta_{B},h_{\alpha_{k}}^{\vee} \rangle \xi \partial \log \Bigg (1 + \sum_{I \neq I_{0,k}}\big |\Delta_{I}^{(k)}(n^{-}(z)) \big|^{2}\Bigg).$$

From Theorem \ref{C8S8.3Sub8.3.2Teo8.3.5} we have $\omega_{CY}$ over the open set $K_{X_{B}}|_{U}$ completelly determined by the forms

\begin{itemize}

    \item $\omega_{X_{B}} |_{U} = \displaystyle  \sum_{k = 1}^{n}\frac{\langle \delta_{B},h_{\alpha_{k}}^{\vee} \rangle}{2\pi} \sqrt{-1} \partial \overline{\partial}\log \Big (1 + \sum_{I \neq I_{0,k}} \big|\Delta_{I}^{(k)}(n^{-}(z)) \big|^{2}\Big)$; \ ( Horizontal )
    
    \item $\nabla \xi |_{U} = d\xi + \displaystyle  \sum_{k = 1}^{n}\langle \delta_{B},h_{\alpha_{k}}^{\vee} \rangle \xi \partial \log \Big (1 + \sum_{I \neq I_{0,k}} \big|\Delta_{I}^{(k)}(n^{-}(z)) \big|^{2}\Big)$. \ (Vertical)
    
\end{itemize}
It is worth to notice that we can compute $\langle \delta_{B},h_{\alpha_{k}}^{\vee} \rangle$, $1 \leq k \leq n$, by means of the Cartan matrix of $\mathfrak{sl}(n+1,\mathbb{C})$.\end{proof}

\subsubsection{Flags of symplectic group: full flag of ${\rm{Sp}}(4,\mathbb{C})$}

Consider  $G^{\mathbb{C}} = {\rm{Sp}}(4,\mathbb{C})$ and fix a Cartan subalgebra of $\mathfrak{sp}(4,\mathbb{C})$ given by diagonal matrices whose the trace is equal zero. The simple root system in this case is given by

\begin{center}

$\Sigma  = \Big \{\alpha_{1} = \epsilon_{1} - \epsilon_{2}, \alpha_{2} = 2\epsilon_{2} \Big \}$.

\end{center}
By choosing  $\Theta = \emptyset$ we have $P_{\Theta} = B$, and the corresponding full flag manifold $X_{B} = {\rm{Sp}}(4,\mathbb{C})/B$. Notice that if we consider ${\rm{Sp}}(2) \subset {\rm{Sp}}(4,\mathbb{C})$ the compact real form of  ${\rm{Sp}}(4,\mathbb{C})$ we have 
$$X_{B} = {\rm{Sp}}(4,\mathbb{C})/B \cong {\rm{Sp}}(2)/T^{2},$$
where $T^{2} \subset {\rm{Sp}}(2)$ denotes a maximal torus of ${\rm{Sp}}(2)$.

In order to get the basic data to describe the metric which we are interested in, we need to understand the subgroup $N^{-} = \exp(\mathfrak{n}^{-})$, since we will work on the open set $U = N^{-}x_{0}$, where $x_{0} = eB \in X_{B}$.

In this case, $\mathfrak{n}^{-} \subset \mathfrak{sp}(4,\mathbb{C})$ is given by matrices of the form 

\begin{center}

$Z = \begin{pmatrix} 
0 & 0 & 0 & 0\\
z_{1} & 0 & 0 & 0\\
z_{2} & z_{3} & 0 & -z_{1}\\
z_{3} & z_{4} & 0 & 0
\end{pmatrix}$.

\end{center}
Since $N^{-} = \exp(\mathfrak{n}^{-})$, let us compute the exponential of an arbitrary matrix $Z \in \mathfrak{n}^{-}$. A straightforward computation provides

\begin{center}

$Z^{2} = \begin{pmatrix} 
0 & 0 & 0 & 0\\
0 & 0 & 0 & 0\\
0 & -z_{1}z_{4} & 0 & 0\\
z_{1}z_{4} & 0 & 0 & 0
\end{pmatrix}$, \ \ $Z^{3} = \begin{pmatrix} 
0 & 0 & 0 & 0\\
0 & 0 & 0 & 0\\
-z_{1}^{2}z_{4} & 0& 0 & 0\\
0 & 0 & 0 & 0
\end{pmatrix}$,
\end{center}
and  $Z^{4} = 0$. Therefore, since for all $Z \in \mathfrak{n}^{-}$ we have  $\exp(Z)$ given by
$$\exp(Z) = \mathbb{1}_{4} + Z + \frac{1}{2!}Z^{2} + \frac{1}{3!}Z^{3},$$
we can write
$$\exp(Z) = \begin{pmatrix} 
1 & 0 & 0 & 0\\
z_{1} & 1 & 0 & 0\\
p_{1}(z) & p_{3}(z) & 1 & -z_{1}\\
p_{2}(z) & z_{4} & 0 & 1
\end{pmatrix},$$
where $z = (z_{1},z_{2},z_{3},z_{4}) \in \mathbb{C}^{4}$, and $p_{j}(z) \in \mathbb{C}[z]$, $j = 1,2,3$, are polynomials given by
\begin{center}
$p_{1}(z) = z_{2} - \frac{1}{3!}z_{1}^{2}z_{4}, \ \ p_{2}(z) = z_{3} + \frac{1}{2!}z_{1}z_{4}, \ \ p_{3}(z) = z_{3} - \frac{1}{2!}z_{1}z_{4}$.
\end{center}
\begin{remark}
It is worth to observe that different from the previous cases in this case the coordinate system provided by the exponential map has a number polynomials greater than the dimension of the big cell, see Remark \ref{coordinates}. Thus we will work with the above description of coordinate functions.
\end{remark}
By using the above computations we parameterize the set $U = N^{-}x_{0}$ in the following way: 

\begin{center}
$z= (z_{1},z_{2},z_{3},z_{4}) \in \mathbb{C}^{4} \mapsto \exp(Z)x_{0} = \begin{pmatrix} 
1 & 0 & 0 & 0\\
z_{1} & 1 & 0 & 0\\
p_{1}(z) & p_{3}(z) & 1 & -z_{1}\\
p_{2}(z) & z_{4} & 0 & 1
\end{pmatrix}x_{0}$.
\end{center}

Other important element to consider in our description are the fundamental representations. In this case we have

\begin{center}
$V(\omega_{\alpha_{1}}) = \mathbb{C}^{4}$ \ \ and \ \ $V(\omega_{\alpha_{2}}) \subset \bigwedge^{2}(\mathbb{C}^{4})$,
\end{center}
where $v_{\omega_{\alpha_{1}}}^{+} = e_{1}$ and $v_{\omega_{\alpha_{2}}}^{+} = e_{1} \wedge e_{2}$. We remark that, different from the case $\mathfrak{sl}(n,\mathbb{C})$, the space $V(\omega_{\alpha_{2}}) \subset \bigwedge^{2}(\mathbb{C}^{4})$ is not the whole space $\bigwedge^{2}(\mathbb{C}^{4})$. Instead, we have a maximal invariant subspace defined by $V(\omega_{\alpha_{2}}) = U(\mathfrak{sp}(4,\mathbb{C}))v_{\alpha_{2}}^{+}$, where $U(\mathfrak{sp}(4,\mathbb{C}))$ defines the enveloping algebra associated to $\mathfrak{sp}(4,\mathbb{C})$.

The potential $\varphi \colon {\rm{Sp}}(4,\mathbb{C}) \to \mathbb{R}$ for $\omega_{X_{B}} \in \Omega^{(1,1)}(X_{B})^{{\rm{Sp}}(2)}$ in this case is given by 
$$\varphi(g) = \displaystyle \frac{\langle \delta_{B},h_{\alpha_{1}}^{\vee} \rangle}{2\pi}\log \bigg ( \big|\big|ge_{1}\big|\big|^{2} \bigg ) + \displaystyle \frac{\langle \delta_{B},h_{\alpha_{2}}^{\vee} \rangle}{2\pi}\log \bigg ( \big|\big|g(e_{1} \wedge e_{2})\big|\big|^{2} \bigg ).
$$
We denote by $n^{-}(z) = \exp(Z) \in N^{-}$, and consider the local section $s_{U} \colon U \subset X_{B} \to {\rm{Sp}}(4,\mathbb{C})$ such that $s_{U}(n^{-}(z)x_{0}) = n^{-}(z)$. From these we have $\omega_{X_{B}}|_{U} = \sqrt{-1}\partial \overline{\partial}(s_{U}^{\ast}(\varphi))$ and
$$\varphi(s_{U}(n^{-}(z)x_{0})) = \varphi(n^{-}(z)).$$

In order to compute the above expression, let us introduce the following notation: given $1 \leq i < j \leq 4$ we define   
\begin{equation}\label{def-matrix}
\displaystyle \det_{ij} \begin{pmatrix} 
1 & 0 \\
z_{1} & 1\\
p_{1}(z) & p_{3}(z)\\
p_{2}(z) & z_{4}
\end{pmatrix},
\end{equation}
as being the determinant of the submatrix $2 \times 2$ determined by the {\it row} $i$ and the {\it row} $j$ of the matrix involved in the expression \ref{def-matrix}. This last convention allows us to write 

$$\varphi(n^{-}(z)) = \displaystyle \frac{\langle \delta_{B},h_{\alpha_{1}}^{\vee} \rangle}{2\pi}\log \Big ( 1+|z_{1}|^{2} + \sum_{j = 1}^{2}\big |p_{j}(z) \big|^{2}\Big )+\displaystyle \frac{\langle \delta_{B},h_{\alpha_{2}}^{\vee} \rangle}{2\pi}\log \Bigg (\sum_{1\leq i<j\leq 4} \Bigg | \det_{ij} \begin{pmatrix} 
1 & 0 \\
z_{1} & 1\\
p_{1}(z) & p_{3}(z)\\
p_{2}(z) & z_{4}
\end{pmatrix} \Bigg |^{2} \Bigg ).$$ 

Since the Cartan matrix $\mathscr{C} = (C_{ij})$ of ${\mathfrak{sp}}(4,\mathbb{C})$ is given by 
\begin{center}
$\mathscr{C} = \begin{pmatrix} 
2 & -1 \\
-2 & \ 2
\end{pmatrix}$, \ \ with \ \ $C_{ij} = \displaystyle \frac{2\kappa(\alpha_{i},\alpha_{j})}{\kappa(\alpha_{j},\alpha_{j})}$,

\end{center}
by considering the set of positive roots $\Pi^{+} = \big \{ \alpha_{1},\alpha_{2},\alpha_{1} +  \alpha_{2}, 2\alpha_{1} + \alpha_{2}\big \}$, we have  $\delta_{B} = 4\alpha_{1} + 3 \alpha_{2}$. Therefore we obtain

\begin{center}
$\langle \delta_{B},h_{\alpha_{1}}^{\vee} \rangle = 4C_{11} + 3C_{21} = 2$ \ \ e \ \ $\langle \delta_{B},h_{\alpha_{2}}^{\vee} \rangle = 4C_{12} + 3C_{22} = 2$,
\end{center}
and
$$\varphi(n^{-}(z)) = \displaystyle \frac{1}{\pi} \Bigg [\log \Big ( 1+|z_{1}|^{2} + \sum_{j = 1}^{2}\big |p_{j}(z) \big |^{2}\Big ) + \log \Bigg (\sum_{1\leq i<j\leq 4} \Bigg | \det_{ij} \begin{pmatrix} 
1 & 0 \\
z_{1} & 1\\
p_{1}(z) & p_{3}(z)\\
p_{2}(z) & z_{4}
\end{pmatrix} \Bigg |^{2} \Bigg ) \Bigg ].$$ 
From this last expression we can determine $\omega_{X_{B}} |_{U} =  \sqrt{-1}\partial \overline{\partial}(s_{U}^{\ast}(\varphi))$ and $\nabla \xi |_{U} = \displaystyle d\xi +  2\pi\xi \partial(s_{U}^{\ast}(\varphi))$, where $U = N^{-}x_{0} \subset X_{B}$. By gathering the above ideas together we have just proved the following result
\begin{proposition}
Consider $G^{\mathbb{C}} = {\rm{Sp}}(4,\mathbb{C})$ and  $X_{B} = {\rm{Sp}}(4,\mathbb{C})/B \cong {\rm{Sp}}(2)/T^{2}$ the corresponding full flag manifold. Then the manifold defined by the total space $K_{X_{B}}$ admits a complete Ricci-flat K\"{a}hler metric $\omega_{CY}$ (locally) described by
$$\omega_{CY} = (2\pi |\xi|^{2} + C)^{\frac{1}{5}} \omega_{X_{B}} - \frac{\sqrt{-1}}{5}(2\pi |\xi|^{2} + C)^{-\frac{4}{5}}\nabla \xi \wedge \overline{\nabla \xi},$$
for some positive constant $C>0$, such that
$$\omega_{X_{B}} = \displaystyle \frac{\sqrt{-1}}{\pi} \partial \overline{\partial} \Bigg [\log \Big ( 1+|z_{1}|^{2} + \sum_{j = 1}^{2}\big |p_{j}(z) \big |^{2}\Big ) + \log \Bigg ( \sum_{1\leq i<j\leq 4} \Bigg | \det_{ij} \begin{pmatrix} 
1 & 0 \\
z_{1} & 1\\
p_{1}(z) & p_{3}(z)\\
p_{2}(z) & z_{4}
\end{pmatrix} \Bigg |^{2} \Bigg ) \Bigg ],$$
and
$$\nabla \xi = \displaystyle d\xi + 2\xi \partial \Bigg [\log \Big ( 1+|z_{1}|^{2} + \sum_{j = 1}^{2}\big |p_{j}(z) \big |^{2}\Big ) + \log \Bigg ( \sum_{1\leq i<j\leq 4} \Bigg | \det_{ij} \begin{pmatrix} 
1 & 0 \\
z_{1} & 1\\
p_{1}(z) & p_{3}(z)\\
p_{2}(z) & z_{4}
\end{pmatrix} \Bigg |^{2} \Bigg ) \Bigg ].$$
\end{proposition}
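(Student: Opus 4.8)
The plan is to obtain this statement as a direct specialization of Theorem \ref{C8S8.3Sub8.3.2Teo8.3.5} to the data $G^{\mathbb{C}} = {\rm{Sp}}(4,\mathbb{C})$ and $\Theta = \emptyset$, so that $P = B$ and $X_{B} = {\rm{Sp}}(4,\mathbb{C})/B$. The first thing I would settle is the complex dimension: since $X_{B}$ is the full flag manifold, $\dim_{\mathbb{C}} X_{B} = |\Pi^{+}| = 4$, so the exponent $n = 4$ in the main theorem produces the powers $(2\pi|\xi|^{2} + C)^{1/5}$ and $(2\pi|\xi|^{2}+C)^{-4/5}$ together with the coefficient $-\sqrt{-1}/5$ appearing in $\omega_{CY}$. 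With $n$ fixed, the overall shape of $\omega_{CY}$ is immediate, and the remaining content of the proposition is the explicit form of the horizontal piece $\omega_{X_{B}}$ and the vertical piece $\nabla \xi$ over the opposite big cell $U = N^{-}x_{0}$.

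Next I would assemble the quasi-potential $\varphi$ from Proposition \ref{Prop2.8}. Since $\Theta = \emptyset$, the product runs over both simple roots $\alpha_{1} = \epsilon_{1} - \epsilon_{2}$ and $\alpha_{2} = 2\epsilon_{2}$, and I would use the two fundamental representations $V(\omega_{\alpha_{1}}) = \mathbb{C}^{4}$ with $v_{\omega_{\alpha_{1}}}^{+} = e_{1}$ and $V(\omega_{\alpha_{2}}) \subset \bigwedge^{2}(\mathbb{C}^{4})$ with $v_{\omega_{\alpha_{2}}}^{+} = e_{1} \wedge e_{2}$. The local section $s_{U}(n^{-}(z)x_{0}) = n^{-}(z)$ then gives $\varphi(n^{-}(z))$ as the sum of $\log\|n^{-}(z)e_{1}\|^{2}$ and $\log\|n^{-}(z)(e_{1}\wedge e_{2})\|^{2}$, each weighted by $\langle \delta_{B}, h_{\alpha_{i}}^{\vee}\rangle/2\pi$. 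Here the computation of $\exp(Z)$ for $Z \in \mathfrak{n}^{-}$ (already carried out, producing the polynomials $p_{1}, p_{2}, p_{3}$) feeds directly into the explicit minors, so that $\|n^{-}(z)e_{1}\|^{2} = 1 + |z_{1}|^{2} + \sum_{j=1}^{2} |p_{j}(z)|^{2}$ and $\|n^{-}(z)(e_{1}\wedge e_{2})\|^{2} = \sum_{1 \le i < j \le 4} |\det_{ij}(\cdots)|^{2}$.

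The one point I would treat with some care — and which is the genuine difference from the $\mathfrak{sl}$-examples — is that $V(\omega_{\alpha_{2}})$ is a \emph{proper} ${\rm{Sp}}(4,\mathbb{C})$-invariant subspace of $\bigwedge^{2}(\mathbb{C}^{4})$, not the whole exterior square. I would note that this causes no trouble for the norm computation: because $n^{-}(z) \in {\rm{Sp}}(4,\mathbb{C})$ and $e_{1}\wedge e_{2} \in V(\omega_{\alpha_{2}})$, the vector $n^{-}(z)(e_{1}\wedge e_{2})$ remains in $V(\omega_{\alpha_{2}})$, and one may take the $G$-invariant inner product on $V(\omega_{\alpha_{2}})$ to be the restriction of the standard Hermitian inner product on $\bigwedge^{2}(\mathbb{C}^{4})$. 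Hence the sum of the squared $2\times 2$ minors over all index pairs correctly computes $\|n^{-}(z)(e_{1}\wedge e_{2})\|^{2}$.

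Finally I would pin down the two coefficients. Using the Cartan matrix of $\mathfrak{sp}(4,\mathbb{C})$ and $\delta_{B} = \sum_{\alpha \in \Pi^{+}}\alpha = 4\alpha_{1} + 3\alpha_{2}$, a short computation gives $\langle \delta_{B}, h_{\alpha_{1}}^{\vee}\rangle = \langle \delta_{B}, h_{\alpha_{2}}^{\vee}\rangle = 2$, so each weight in $\varphi$ equals $1/\pi$. Substituting into $\omega_{X_{B}}|_{U} = \sqrt{-1}\,\partial\overline{\partial}(s_{U}^{\ast}\varphi)$ and $\nabla\xi|_{U} = d\xi + 2\pi\xi\,\partial(s_{U}^{\ast}\varphi)$ yields exactly the stated horizontal and vertical forms, and plugging these into the $n=4$ instance of $\omega_{CY}$ completes the argument. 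The main obstacle is purely computational — correctly exponentiating $\mathfrak{n}^{-}$ and handling the symplectic fundamental representation $V(\omega_{\alpha_{2}})$ — rather than conceptual, since completeness and Ricci-flatness are already guaranteed by Theorem \ref{C8S8.3Sub8.3.2Teo8.3.5}.
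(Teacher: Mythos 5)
Your proposal is correct and follows essentially the same route as the paper: specialize Theorem \ref{C8S8.3Sub8.3.2Teo8.3.5} with $n=4$, exponentiate $\mathfrak{n}^{-}$ to get the polynomials $p_{j}(z)$, evaluate the quasi-potential on the two fundamental representations, and compute $\langle \delta_{B},h_{\alpha_{1}}^{\vee}\rangle = \langle \delta_{B},h_{\alpha_{2}}^{\vee}\rangle = 2$ from $\delta_{B}=4\alpha_{1}+3\alpha_{2}$ and the Cartan matrix. Your explicit justification that the norm of $n^{-}(z)(e_{1}\wedge e_{2})$ may be computed inside $\bigwedge^{2}(\mathbb{C}^{4})$ even though $V(\omega_{\alpha_{2}})$ is a proper invariant subspace is a point the paper only remarks on in passing, and is a welcome clarification.
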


%\section{C\'{a}lculos em ${\rm{SO}}(8,\mathbb{C})$}
\subsubsection{Flags of special orthogonal group: an example of ${\rm{SO}}(8,\mathbb{C})$}
Consider $G^{\mathbb{C}} = {\rm{SO}}(8,\mathbb{C})$ and fix the Cartan subalgebra of $\mathfrak{so}(8,\mathbb{C})$ given by the diagonal matrices whose the trace is equal zero. The simple root system in this case is given by
$$\Sigma = \Big \{ \alpha_{1} = \epsilon_{1} - \epsilon_{2}, \alpha_{2} = \epsilon_{2} - \epsilon_{3}, \alpha_{3} = \epsilon_{3}-\epsilon_{4}, \alpha_{4} = \epsilon_{3}+\epsilon_{4} \Big \}.$$

By choosing $\Theta = \{\alpha_{3},\alpha_{4}\}$ we have the associated parabolic subgroup $P = P_{\Theta}$ which determines the complex flag manifold $X_{P} = {\rm{SO}}(8,\mathbb{C})/P$. If we consider the compact real form ${\rm{SO}}(8) \subset {\rm{SO}}(8,\mathbb{C})$ of ${\rm{SO}}(8,\mathbb{C})$, we also can write
$$X_{P} = {\rm{SO}}(8,\mathbb{C})/P \cong {\rm{SO}}(8)/ {\rm{SU}}(3) \times T^{2}$$
where ${\rm{SU}}(3) \times {\rm{U}}(1) \times {\rm{U}}(1) = P \cap {\rm{SO}}(8)$.

In order to describe the metric which we are interested in, we need to understand the subgroup $R_{u}(P)^{-} \subset N^{-}$. We start by remarking that the Lie algebra of  $R_{u}(P)^{-}$ is given by matrices of the form
%Para fazer a descri\c{c}\~{a}o da m\'{e}trica que estamos interessados precisamos entender o subgrupo nilpotente $R_{u}(P)^{-} \subset N^{-}$, visto que trabalharemos no aberto $U = R_{u}(P)^{-}x_{0} \subset X_{P}$, $x_{0} = eP$. Primeiro observamos que a \'{a}lgebra de Lie de $R_{u}(P)^{-}$ \'{e} definida por matrizes da forma 
\begin{equation}\label{mat-so-n}
Z = \begin{pmatrix} 
0     &   0    &    0     &   0    &  0  &   0    &   0    &   0  \\
z_{1} &   0    &    0     &   0    &  0  &   0    &   0    &   0  \\
z_{2} & z_{7}  &    0     &   0    &  0  &   0    &   0    &   0  \\
z_{3} & z_{8}  &    0     &   0    &  0  &   0    &   0    &   0  \\
0     & -z_{4} &  -z_{5}  & -z_{6} &  0  & -z_{1} & -z_{2} & -z_{3}\\
z_{4} &   0    &  -z_{9}  & -z_{10}&  0  &   0    & -z_{7} & -z_{8}  \\
z_{5} & z_{9}  &    0     &   0    &  0  &   0    &   0    &   0  \\
z_{6} & z_{10} &    0     &   0    &  0  &   0    &   0    &   0  
\end{pmatrix}.
\end{equation}

Since $R_{u}(P)^{-} = \exp({\text{Lie}}(R_{u}(P)^{-}))$ in order to parameterize the open cell $U = R_{u}(P)^{-}x_{0} \subset X_{P}$, we need to compute the exponential of the matrices of the form \ref{mat-so-n}. Given $Z \in {\text{Lie}}(R_{u}(P)^{-})$ we have \begin{center}
$\exp(Z) = \mathbb{1}_{8} + Z + \frac{1}{2!}Z^{2} + \frac{1}{3!}Z^{3} + \frac{1}{4!}Z^{4}$,
\end{center}
that is, 
$$\exp(Z) = \begin{pmatrix} 
1        &   0      &   0     &   0      &  0  &   0    &   0       &   0  \\
z_{1}    &   1      &   0     &   0      &  0  &   0    &   0       &   0  \\
p_{1}(z) & z_{7}    &   1     &   0      &  0  &   0    &   0       &   0  \\
p_{2}(z) & z_{8}    &   0     &   1      &  0  &   0    &   0       &   0  \\
p_{3}(z) & p_{7}(z) & p_{9}(z)& p_{10}(z)&  1  & -z_{1} & p_{11}(z) &  p_{12}(z)\\
p_{4}(z) & p_{8}(z) & -z_{9}  & -z_{10}  &  0  &   1    & -z_{7}    & -z_{8}  \\
p_{5}(z) & z_{9}    &   0     &   0      &  0  &   0    &   1       &   0  \\
p_{6}(z) & z_{10}   &   0     &   0      &  0  &   0    &   0       &   1  
\end{pmatrix},$$
where $z = (z_{1},\ldots,z_{10}) \in \mathbb{C}^{10}$, and $p_{1}(z),\ldots,p_{12}(z) \in \mathbb{C}[z]$ are given by
$$p_{1}(z) = z_{2} + \frac{1}{2!}z_{1}z_{7}, \ \ p_{2}(z) = z_{3} + \frac{1}{2!}z_{1}z_{8}, \ \ p_{3}(z) = -\frac{2}{2!}(z_{1}z_{4} + z_{2}z_{5} +z_{3}z_{6}) + \frac{2}{4!}z_{1}(z_{1}z_{7}z_{9}+z_{1}z_{8}z_{10}),$$
$$p_{4}(z) = z_{4} - \frac{1}{2!} (z_{2}z_{9} + z_{7}z_{5}+z_{3}z_{10}+z_{8}z_{6}) - \frac{2}{3!}z_{1}(z_{7}z_{9}+z_{8}z_{10}), \ \ p_{5}(z) = z_{5} + \frac{1}{2!}z_{1}z_{9}, \ \ p_{6}(z) = z_{6} + \frac{1}{2!}z_{1}z_{10},$$
$$p_{7}(z) = -z_{4} - \frac{1}{2!}(z_{2}z_{9} + z_{7}z_{5} + z_{3}z_{10} + z_{8}z_{6}) + \frac{2}{3!}(z_{1}z_{7}z_{9} + z_{1}z_{8}z_{10}), \ \ p_{8}(z) = -\frac{2}{2!}(z_{7}z_{9} + z_{8}z_{10}),$$
$$p_{9}(z) = -z_{5} + \frac{2}{2!}z_{1}z_{9}, \ \ p_{10}(z) = -z_{6} + \frac{2}{2!}z_{1}z_{10}, \ \ p_{11}(z) = -z_{2} + \frac{2}{2!}z_{1}z_{7}, \ \ p_{12}(z) = -z_{3} +  \frac{2}{2!}z_{1}z_{8}.$$
Therefore the parameterization of the open cell $U = R_{u}(P)^{-}x_{0}$ is given by
$$z = (z_{1},\ldots,z_{10}) \in \mathbb{C}^{10} \mapsto \exp(Z)x_{0} \in  R_{u}(P)^{-}x_{0}.$$
The fundamental representations to consider in this case are given by
$$
V(\omega_{\alpha_{1}}) = \mathbb{C}^{8} \mbox{  and  } V(\omega_{\alpha_{2}}) = \bigwedge^{2}(\mathbb{C}^{8}),
$$
with $v_{\omega_{\alpha_{1}}}^{+} = e_{1}$ and $v_{\omega_{\alpha_{2}}}^{+} = e_{1} \wedge e_{2}$.\\ %Aqui \'{e} importante observar que tomamos $\Theta \subset \Sigma$ de modo a n\~{a}o ter que lidar com representa\c{c}\~{o}es spinorais.\\

The potential $\varphi \colon {\rm{SO}}(8,\mathbb{C}) \to \mathbb{R}$ for $\omega_{X_{P}} \in \Omega^{(1,1)}(X_{P})^{{\rm{SO}}(8)}$ is defined by
$$\varphi(g) = \displaystyle \frac{\langle \delta_{P},h_{\alpha_{1}}^{\vee} \rangle}{2\pi}\log \big ( ||ge_{1}||^{2} \big ) + \displaystyle \frac{\langle \delta_{P},h_{\alpha_{2}}^{\vee} \rangle}{2\pi}\log \big ( ||g(e_{1} \wedge e_{2})||^{2} \big ).$$
By denoting $n^{-}(z) = \exp(Z) \in R_{u}(P)^{-}$, we can take the local section $s_{U} \colon U \subset X_{B} \to {\rm{Sp}}(4,\mathbb{C})$, such that $s_{U}(n^{-}(z)x_{0}) = n^{-}(z)$. Since $s_{U}^{\ast} \varphi = \varphi \circ s_{U}$, we obtain
$$\varphi(s_{U}(n^{-}(z)x_{0})) = \varphi(n^{-}(z)).$$

We will use a notation similar to the previous case, i.e., given $1 \leq i < j \leq 8$ we define
\begin{equation}\label{det-so-n}
\displaystyle \det_{ij} \begin{pmatrix} 
    1    & 0 \\
 z_{1}   & 1\\
p_{1}(z) & z_{7}\\
p_{2}(z) & z_{8}\\
p_{3}(z) & p_{7}(z)\\
p_{4}(z) & p_{8}(z)\\
p_{5}(z) & z_{9}\\
p_{6}(z) & z_{10} 
\end{pmatrix},
\end{equation}
as being the determinant of the $2\times 2$ submatrix defined by the row $i$ and by the row $j$ of the matrix $8 \times 2$ described by \ref{det-so-n}. From this we can write 
$$\varphi(n^{-}(z)) = \displaystyle \frac{\langle \delta_{P},h_{\alpha_{1}}^{\vee} \rangle}{2\pi}\log \Big ( 1+|z_{1}|^{2} + \sum_{j = 1}^{6}\big |p_{j}(z) \big|^{2}\Big )+\displaystyle \frac{\langle \delta_{P},h_{\alpha_{2}}^{\vee} \rangle}{2\pi}\log \Bigg (\sum_{1\leq i<j\leq 8} \Bigg | \det_{ij} \begin{pmatrix} 
    1    & 0 \\
 z_{1}   & 1\\
p_{1}(z) & z_{7}\\
p_{2}(z) & z_{8}\\
p_{3}(z) & p_{7}(z)\\
p_{4}(z) & p_{8}(z)\\
p_{5}(z) & z_{9}\\
p_{6}(z) & z_{10} 
\end{pmatrix} \Bigg |^{2} \Bigg ).$$ 

In order to compute the coefficients $\langle \delta_{P},h_{\alpha_{1}}^{\vee} \rangle$ and $\langle \delta_{P},h_{\alpha_{2}}^{\vee} \rangle$ we use the Cartan matrix of $\mathfrak{so}(8,\mathbb{C})$. Let $\mathscr{C} = (C_{ij})$ be the Cartan matrix of $\mathfrak{so}(8,\mathbb{C})$, in this case we have
\begin{center}
$\mathscr{C} = \begin{pmatrix} 
\ 2 & -1 & \ \ 0 & \ \ 0\\
-1 & \ \ 2 & -1 & -1\\
\ 0 & -1 &  \ \ 2 & \ \ 0\\
\ 0 & -1 & \ \ 0 & \ \ 2
\end{pmatrix}$, \ \ with \ \ $C_{ij} = \displaystyle \frac{2\kappa(\alpha_{i},\alpha_{j})}{\kappa(\alpha_{j},\alpha_{j})}$.
\end{center}

Since $\Theta = \{\alpha_{3},\alpha_{4}\}$ it follows that
$$\Pi^{+} \backslash \langle \Theta \rangle^{+}= 
  \Bigg \{ \begin{matrix}
  \alpha_{1}, \ \ \alpha_{2}, \ \ \alpha_{1} + \alpha_{2}, \ \ \alpha_{2} + \alpha_{3}, \ \ \alpha_{2} + \alpha_{4}, \ \ \alpha_{1} + \alpha_{2} + \alpha_{3}, \ \ \alpha_{1} + \alpha_{2} + \alpha_{4},  \\
  \alpha_{2} + \alpha_{3} + \alpha_{4}, \ \ \alpha_{1} + \alpha_{2} + \alpha_{3} + \alpha_{4}, \ \ \alpha_{1} + 2 \alpha_{2} + \alpha_{3} + \alpha_{4}
 \end{matrix} \Bigg \}.
 $$
By a direct computation we obtain 
$$\delta_{P} = 6\alpha_{1} + 10 \alpha_{2} + 5 \alpha_{3} + 5 \alpha_{4},$$
and 
\begin{center}
$\langle \delta_{P},h_{\alpha_{1}}^{\vee} \rangle = 6 C_{11} + 10 C_{21} + 5 C_{31} + 5 C_{41} = 2, \ \ {\text{and}} \ \ \langle \delta_{P},h_{\alpha_{2}}^{\vee} \rangle = 6 C_{12} + 10 C_{22} + 5 C_{32} + 5 C_{42} = 4$, 

\end{center}
therefore we have 
$$\varphi(n^{-}(z)) = \displaystyle  \frac{1}{\pi} \Bigg [  \log \Big ( 1+|z_{1}|^{2} + \sum_{j = 1}^{6}\big |p_{j}(z) \big|^{2}\Big ) + 2\log \Bigg (\sum_{1\leq i<j\leq 8} \Bigg | \det_{ij} \begin{pmatrix} 
    1    & 0 \\
 z_{1}   & 1\\
p_{1}(z) & z_{7}\\
p_{2}(z) & z_{8}\\
p_{3}(z) & p_{7}(z)\\
p_{4}(z) & p_{8}(z)\\
p_{5}(z) & z_{9}\\
p_{6}(z) & z_{10} 
\end{pmatrix} \Bigg |^{2} \Bigg ) \Bigg ].$$
By Gathering the above ideas together we can describe
\begin{center}
$\omega_{X_{P}} |_{U} =  \sqrt{-1}\partial \overline{\partial}(s_{U}^{\ast}(\varphi))$ \ \ and \ \ $\nabla \xi |_{U} = \displaystyle d\xi +  2\pi\xi \partial(s_{U}^{\ast}(\varphi))$,
\end{center}
where $U = R_{u}(P)^{-}x_{0} \subset X_{P}$. From these we have shown the following result

\begin{proposition}
Let $G^{\mathbb{C}} = {\rm{SO}}(8,\mathbb{C})$ and $X_{P} = {\rm{SO}}(8,\mathbb{C})/P \cong {\rm{SO}}(8)/ {\rm{SU}}(3) \times T^{2}$, where $P = P_{\{\alpha_{3},\alpha_{4}\}}$. Then the total space of $K_{X_{P}}$ admits a complete Ricci-flat K\"{a}hler metric $\omega_{CY}$ (locally) described by

\begin{center}
$\omega_{CY} = (2\pi |\xi|^{2} + C)^{\frac{1}{11}} \omega_{X_{P}} - \frac{\sqrt{-1}}{11}(2\pi |\xi|^{2} + C)^{-\frac{10}{11}}\nabla \xi \wedge \overline{\nabla \xi}$,
\end{center}
for some positive constant $C>0$, such that 
$$\omega_{X_{B}} = \displaystyle \frac{\sqrt{-1}}{\pi}\partial \overline{\partial} \Bigg [ \log \Big ( 1+|z_{1}|^{2} + \sum_{j = 1}^{6}\big |p_{j}(z) \big|^{2}\Big ) + 2\log \Bigg (\sum_{1\leq i<j\leq 8} \Bigg | \det_{ij} \begin{pmatrix} 
    1    & 0 \\
 z_{1}   & 1\\
p_{1}(z) & z_{7}\\
p_{2}(z) & z_{8}\\
p_{3}(z) & p_{7}(z)\\
p_{4}(z) & p_{8}(z)\\
p_{5}(z) & z_{9}\\
p_{6}(z) & z_{10} 
\end{pmatrix} \Bigg |^{2} \Bigg ) \Bigg ],$$ 
and
$$\nabla \xi = \displaystyle d\xi +   2\xi \partial \Bigg [\log \Big ( 1+|z_{1}|^{2} + \sum_{j = 1}^{6}\big |p_{j}(z) \big|^{2}\Big ) +  2 \log \Bigg (\sum_{1\leq i<j\leq 8} \Bigg | \det_{ij} \begin{pmatrix} 
    1    & 0 \\
 z_{1}   & 1\\
p_{1}(z) & z_{7}\\
p_{2}(z) & z_{8}\\
p_{3}(z) & p_{7}(z)\\
p_{4}(z) & p_{8}(z)\\
p_{5}(z) & z_{9}\\
p_{6}(z) & z_{10} 
\end{pmatrix} \Bigg |^{2} \Bigg ) \Bigg ].$$
\end{proposition}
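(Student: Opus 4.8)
The plan is to obtain the statement as a direct specialization of Theorem \ref{C8S8.3Sub8.3.2Teo8.3.5} to $G^{\mathbb{C}} = {\rm{SO}}(8,\mathbb{C})$ with $\Theta = \{\alpha_{3},\alpha_{4}\}$, so that the only real work is to make the Lie-theoretic data appearing in the Calabi ansatz explicit in coordinates on the opposite big cell. First I would record that $X_{P} = {\rm{SO}}(8,\mathbb{C})/P$ is a complex flag manifold whose complex dimension equals the cardinality of $\Pi^{+} \backslash \langle \Theta \rangle^{+}$; listing these ten roots gives $\dim_{\mathbb{C}}(X_{P}) = n = 10$, whence Theorem \ref{C8S8.3Sub8.3.2Teo8.3.5} yields the exponents $\frac{1}{n+1} = \frac{1}{11}$ and $\frac{n}{n+1} = \frac{10}{11}$ recorded in the statement. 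Completeness, Ricci-flatness, and the holonomy reduction ${\rm{Hol}}(\nabla^{\omega_{CY}}) \subseteq {\rm{SU}}(11)$ are then inherited verbatim from that theorem, using again that $X_{P}$, and hence $K_{X_{P}}$, is simply connected.

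Next I would pin down the representation-theoretic ingredients of the quasi-potential. Since $\Sigma \backslash \Theta = \{\alpha_{1},\alpha_{2}\}$, only the first two fundamental weights contribute, and for $\mathfrak{so}(8,\mathbb{C})$ one has $V(\omega_{\alpha_{1}}) = \mathbb{C}^{8}$ with $v_{\omega_{\alpha_{1}}}^{+} = e_{1}$ and $V(\omega_{\alpha_{2}}) = \bigwedge^{2}(\mathbb{C}^{8})$ with $v_{\omega_{\alpha_{2}}}^{+} = e_{1}\wedge e_{2}$. I would then compute $\delta_{P} = \sum_{\alpha \in \Pi^{+}\backslash\langle\Theta\rangle^{+}}\alpha = 6\alpha_{1} + 10\alpha_{2} + 5\alpha_{3} + 5\alpha_{4}$ and, pairing against the coroots via the Cartan matrix of $D_{4}$, obtain $\langle \delta_{P},h_{\alpha_{1}}^{\vee}\rangle = 2$ and $\langle \delta_{P},h_{\alpha_{2}}^{\vee}\rangle = 4$. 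Substituting these into the quasi-potential of Theorem \ref{C8S8.3Sub8.3.2Teo8.3.5} reduces it to $\varphi(g) = \frac{1}{\pi}\log\|ge_{1}\|^{2} + \frac{2}{\pi}\log\|g(e_{1}\wedge e_{2})\|^{2}$.

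The remaining and most laborious step is to make $\varphi$ explicit on the opposite big cell $U = R_{u}(P)^{-}x_{0}$. I would parameterize $U$ by the exponential map, exponentiating an arbitrary nilpotent $Z \in {\rm{Lie}}(R_{u}(P)^{-})$; since $Z^{5} = 0$, the series truncates to $\sum_{j=0}^{4} Z^{j}/j!$, producing the matrix $n^{-}(z) = \exp(Z)$ whose entries are the polynomials $p_{1}(z),\ldots,p_{12}(z)$. Pulling $\varphi$ back along the section $s_{U}(n^{-}(z)x_{0}) = n^{-}(z)$, the factor $\|n^{-}(z)e_{1}\|^{2}$ is simply the squared Euclidean norm of the first column, $1 + |z_{1}|^{2} + \sum_{j=1}^{6}|p_{j}(z)|^{2}$, while $\|n^{-}(z)(e_{1}\wedge e_{2})\|^{2}$ expands in the basis $\{e_{i}\wedge e_{j}\}_{i<j}$ as the sum of squared moduli of the $2\times 2$ minors built from the first two columns, namely $\sum_{1\le i<j\le 8}|\det_{ij}(\cdot)|^{2}$. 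Inserting these into $\omega_{X_{P}}|_{U} = \sqrt{-1}\partial\overline{\partial}(s_{U}^{\ast}\varphi)$ and $\nabla\xi|_{U} = d\xi + 2\pi\xi\,\partial(s_{U}^{\ast}\varphi)$ supplied by the main theorem produces the claimed horizontal and vertical forms.

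The hard part will be purely computational: carrying out the exponentiation of the $8\times 8$ nilpotent block and bookkeeping the twelve polynomial coordinate functions without error. The one conceptual point I would check carefully is that for $D_{4}$ the second fundamental representation is genuinely all of $\bigwedge^{2}(\mathbb{C}^{8})$, in contrast to the symplectic case where $V(\omega_{\alpha_{2}})$ is a proper invariant submodule of $\bigwedge^{2}(\mathbb{C}^{4})$; this ensures that the naive minor-sum formula for $\|n^{-}(z)(e_{1}\wedge e_{2})\|^{2}$ is valid and that no projection onto a smaller submodule is required.
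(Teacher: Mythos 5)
Your proposal is correct and follows essentially the same route as the paper: specialize Theorem \ref{C8S8.3Sub8.3.2Teo8.3.5} with $n=10$, compute $\delta_{P}=6\alpha_{1}+10\alpha_{2}+5\alpha_{3}+5\alpha_{4}$ and the pairings $\langle \delta_{P},h_{\alpha_{1}}^{\vee}\rangle=2$, $\langle \delta_{P},h_{\alpha_{2}}^{\vee}\rangle=4$ from the $D_{4}$ Cartan matrix, parameterize the opposite big cell by exponentiating the nilpotent radical (the series truncating at $Z^{4}$), and expand the two norms as the first-column norm and the sum of $2\times 2$ minors. Your side remark that $V(\omega_{\alpha_{2}})=\bigwedge^{2}(\mathbb{C}^{8})$ is genuinely irreducible for $D_{4}$ (unlike the symplectic case) is a correct and worthwhile check that the paper makes only implicitly.
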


\begin{remark}
It is worth to point out that the above description of Calabi's metrics on the canonical bundle of complex flag manifolds can be reproduced for other complex flag manifolds associated to the classical groups $G^{\mathbb{C}} = {\rm{SL}}(n,\mathbb{C}), {\rm{SO}}(n,\mathbb{C})$ and ${\rm{Sp}}(2n,\mathbb{C})$.

%The difficult which we have in obtain explicit formulas in the above cases lies on the description of fundamental representations, actually, in the case $G^{\mathbb{C}} = {\rm{SO}}(n,\mathbb{C})$ we have fundamental representations of the form $\bigwedge^{j}(\mathbb{C}^{n})$ and spinor representations, in the case of $G^{\mathbb{C}} = {\rm{Sp}}(2n,\mathbb{C})$, unlike the case  ${\rm{SL}}(n+1,\mathbb{C})$, the fundamental representations are not the whole space $\bigwedge^{j}(\mathbb{C}^{2n})$ when $j \neq 1$, see \cite{SMA} for more details about fundamental representations of classical simple Lie algebras.

\end{remark}

\section{Final remarks: flag manifolds associated to exceptional Lie groups}
We finish our discussion with a basic description of how the result of Theorem \ref{C8S8.3Sub8.3.2Teo8.3.5} can be applied in some complex flag manifolds on which the underlying complex simple Lie algebra is not of the classical type.

%\begin{example}
%Let us give some brief idea of how theorem \ref{C8S8.3Sub8.3.2Teo8.3.5} can be applied for some flag manifolds associated to nonclassical complex simple Lie algebras.\\

%\begin{itemize}
\subsection{The complex Cayley plane} \label{sec-cayley}
Consider the case $G^{\mathbb{C}} = E_{6}$. Here our conventions for the Lie algebra structure are according to \cite{SMA} and our approach of the complex Cayley plane is according to \cite{CAYLEY}.  
\begin{figure}[H]
\centering
\includegraphics[scale=0.6]{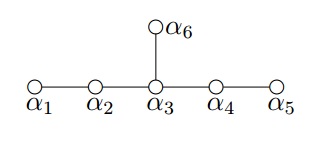}
\caption{Dynkin diagram associated to the Lie algebra $E_{6}$.}
\end{figure}
The complex Cayley plane is the complex flag manifold $\mathbb{O}{\rm{P}}^{2}$ obtained from $\Theta = \Sigma \backslash \{\alpha_{5}\}$, namely 

\begin{center}

$\mathbb{O}{\rm{P}}^{2} = E_{6}/P_{\Theta}$.

\end{center}
For this manifold we have $\dim_{\mathbb{C}}(\mathbb{O}\rm{P}^{2}) = 16$. From Propositions \ref{C8S8.2Sub8.2.3P8.2.6} and \ref{C8S8.2Sub8.2.3P8.2.6} we have
$${\text{Pic}}(\mathbb{O}{\rm{P}}^{2}) = \mathbb{Z}[\eta_{\alpha_{5}}].$$

From Proposition \ref{C8S8.2Sub8.2.3Eq8.2.35} we obtain
$$-K_{\mathbb{O}{\rm{P}}^{2}} = L_{\chi_{\omega_{\alpha_{5}}}}^{\otimes \langle \delta_{P_{\Theta}},h_{\alpha_{5}}^{\vee} \rangle}.$$

The K\"{a}hler form $\omega_{\mathbb{O}{\rm{P}}^{2}} \in c_{1}(\mathbb{O}{\rm{P}}^{2})$ can be described by means of the quasi-potential $\varphi \colon E_{6} \to \mathbb{R}$ given by
$$\varphi(g) = \displaystyle \frac{\langle \delta_{P_{\Theta}}, h_{\alpha_{5}}^{\vee}\rangle}{2\pi} \log ||g v_{\omega_{\alpha_{5}}}^{+}||^{2}.$$
As in the previous examples we have (locally) $\omega_{\mathbb{O}{\rm{P}}^{2}}|_{U} = \sqrt{-1}\partial \overline{\partial}(s_{U}^{\ast}(\varphi))$, for some local section $s_{U} \colon U \subset \mathbb{O}{\rm{P}}^{2} \to E_{6}$. Therefore, we can apply Theorem \ref{C8S8.3Sub8.3.2Teo8.3.5} and obtain a complete Ricci-flat metric on $K_{\mathbb{O}{\rm{P}}^{2}}$ with associated K\"{a}hler form $\omega_{CY}$ described (locally) by the expression
$$\omega_{CY} = (2\pi |\xi|^{2} + C)^{\frac{1}{17}} \omega_{\mathbb{O}{\rm{P}}^{2}} - \frac{\sqrt{-1}}{17}(2\pi |\xi|^{2} + C)^{-\frac{16}{17}}\nabla \xi \wedge \overline{\nabla \xi},$$
for some positive constant $C>0$. Thus $(K_{\mathbb{O}{\rm{P}}^{2}},\omega_{CY})$ defines a complete noncompact Calabi-Yau manifold with complex dimension $17$. The explicit local expression of $\omega_{CY}$ in this case is non-trivial since the matrix realization of $E_{6}$ is quite complicated.

\subsection{Freudenthal variety}
Now consider $G^{\mathbb{C}} = E_{7}$. As before our conventions for the Lie algebra structure are according to \cite{SMA}. 
\begin{figure}[H]
\centering
\includegraphics[scale=0.5]{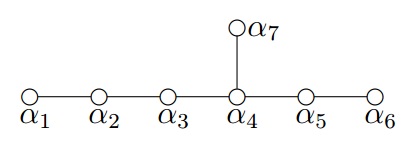}
\caption{Dynkin diagram associated to the Lie algebra $E_{7}$.}
\end{figure}
The Freudenthal variety is defined by the complex $27$-dimensional flag manifold associated to $\Theta = \Sigma \backslash \{\alpha_{6}\}$, namely $X_{P_{\Theta}} = E_{7}/P_{\Theta}$. From Propositions \ref{C8S8.2Sub8.2.3P8.2.6} and \ref{C8S8.2Sub8.2.3P8.2.6} we have
$$
{\text{Pic}}(E_{7}/P_{\Theta}) = \mathbb{Z}[\eta_{\alpha_{6}}].$$
Thus we obtain $-K_{E_{7}/P_{\Theta}} = L_{\chi_{\omega_{\alpha_{6}}}}^{\otimes \langle \delta_{P_{\Theta}}, h_{\alpha_{6}}^{\vee}\rangle}$. From these the quasi-potential $\varphi \colon E_{7} \to \mathbb{R}$ which defines $\omega_{E_{7}/P_{\Theta}} \in c_{1}(E_{7}/P_{\Theta})$ is given by
$$\varphi(g) = \displaystyle \frac{\langle \delta_{P_{\Theta}}, h_{\alpha_{6}}^{\vee}\rangle}{2\pi} \log ||g v_{\omega_{\alpha_{6}}}^{+}||^{2}.$$

Now if we apply Theorem \ref{C8S8.3Sub8.3.2Teo8.3.5} on $K_{E_{7}/P_{\Theta}}$ we obtain a complete Ricci-flat metric with associated K\"{a}hler form 
$$\omega_{CY} = (2\pi |\xi|^{2} + C)^{\frac{1}{28}} \omega_{E_{7}/P_{\Theta}} - \frac{\sqrt{-1}}{28}(2\pi |\xi|^{2} + C)^{-\frac{27}{28}}\nabla \xi \wedge \overline{\nabla \xi},$$
for some positive constant $C>0$. From these we obtain by Theorem \ref{C8S8.3Sub8.3.2Teo8.3.5} a complete noncompact Calabi-Yau manifold $(K_{E_{7}/P_{\Theta}},\omega_{CY})$. As in the previous case the explicit description of the local expression of the K\"{a}hler form $\omega_{CY}$ above is non-trivial since the underlying complex Lie algebra in this context has dimension $133$.

\begin{remark}
As we have seen in the last examples, for flag manifolds associated to exceptional complex simple Lie algebras the application of Theorem \ref{C8S8.3Sub8.3.2Teo8.3.5} becomes very complicated. The main reason is that for these types of Lie algebras we do not have a manageable matrix realization, thus we can not directly derive a suitable local expression for the potential which defines the first Chern class of $X_{P}$. % moreover the fundamental representations associated to these algebras are highly non-trivial.
\end{remark}

\end{document}